\newfont{\cyr}{wncyr10 scaled\magstep0}
\newcommand{\setmid}{\mathrel{}\middle|\mathrel{}}
\newcommand{\C}{\mathbb{C}}
\newcommand{\F}{\mathbb{F}}
\newcommand{\HH}{\mathbb{H}}
\newcommand{\Q}{\mathbb{Q}}
\newcommand{\R}{\mathbb{R}}
\newcommand{\Z}{\mathbb{Z}}
\DeclareMathOperator{\Gal}{Gal} 
\DeclareMathOperator{\Tr}{Tr} 
\theoremstyle{plain}
 \newtheorem{theorem}{Theorem}[section]
 \crefname{theorem}{Theorem}{Theorems}
 \newtheorem{proposition}[theorem]{Proposition}
 \crefname{proposition}{Proposition}{Propositions}
 \newtheorem{lemma}[theorem]{Lemma}
 \crefname{lemma}{Lemma}{Lemmas}
 \newtheorem{corollary}[theorem]{Corollary}
 \crefname{corollary}{Corollary}{Corollaries}
 \crefname{conjecture}{Conjecture}{Conjectures}
 \crefname{hypothesis}{Hypothesis}{Hypotheses}
 \crefname{question}{Question}{Questions}
 \crefname{problem}{Problem}{Problems}
\theoremstyle{definition} 
 \newtheorem{definition}[theorem]{Definition}
 \crefname{definition}{Definition}{Definitions}
 \newtheorem{example}[theorem]{Example}
 \crefname{example}{Example}{Examples}
 \newtheorem{remark}[theorem]{Remark}
 \crefname{remark}{Remark}{Remarks}
\title{Lattices of type $A_{n}, D_{n}, E_{n}$ and codes}
\author{Riku Higa}
\address[Riku Higa]{Department of Mathematics \\ Faculty of Science and Technology \\ Tokyo University of Science, 2641, Yamazaki, Noda, Chiba, Japan}
\email{6123702@ed.tus.ac.jp \\ 6121510@ed.tus.ac.jp}
	\subjclass[2010]{primary 11H71; 
	secondary
		11E12; 
		11H06; 
	}
\keywords{unimodular lattice; coding theory; theta series.}
\begin{document}


\maketitle

\begin{abstract}
We propose a construction of lattices from codes corresponding to lattices of type $A_n$, $D_n$ and $E_n$. This construction is a generalization of construction A of lattices from $p$-ary codes corresponding to a lattice of type $A_{p-1}$. Moreover, we introduce some examples of application of lattices from the construction to Hilbert modular form.

\end{abstract}



\section{Introduction}

The relation between lattices and codes has been studied by a number of researchers.
The basic example of the studies is a construction of a lattice from a binary linear code (construction A).
This leads to a relation between the Hamming weight enumerator of a binary linear code and the theta function of the corresponding lattice (for the detail, see e.g. \cite{Sloane1999}). 
Van der Geer and Hirzebruch generalized the relation in the binary case to self-dual codes over the prime field $\F_p$, where $p$ is an odd prime number (for the detail, see e.g. \cite[Ch. 5]{Ebeling}).
One of the key points of this generalization is a correspondence between lattices of type $A_{p-1}$ and $p$-ary codes.
In this paper, we generalize the correspondence to more general even root lattices. The following theorem is the main result of this paper.
\begin{theorem}\label{mainthe}
	Let $n$ and $m$ be a positive integer, $\Lambda$ be a lattice of type $A_n$, $D_n$ or $E_n$ and $C$ be an $R$-code of length $m$,
	where
	\[
	R=\begin{cases}
		\Z/(n+1)\Z & \text{if}~\Lambda~\text{is lattice of type}~A_n,\\
		\Z/4\Z & \text{if}~\Lambda~\text{is~lattice~of~type}~D_n~\text{for~odd}~n,\\
		\F_2+u\F_2 & \text{if}~\Lambda~\text{is~lattice~of~type}~D_n~\text{for~even}~n,\\ 
		\Z/3\Z & \text{if}~\Lambda~\text{is~lattice~of~type}~E_6,\\
		\Z/2\Z & \text{if}~\Lambda~\text{is~lattice~of~type}~E_7. 
	\end{cases}
	\] 
	There exists a mapping 
	$\rho^{\oplus m}:~(\Lambda^*)^{\oplus m}\rightarrow R^{\oplus m}$
	such that
	\begin{align*}
	&\Gamma_C(=(\rho^{\oplus m})^{-1}(C)) ~\text{is~even~unimodular}\Leftrightarrow \\
	&\begin{cases}
		C~\text{is~Type~II} & \text{if}~\Lambda~\text{is~lattice~of~type}~A_n~\text{for~odd}~n,\\
		C~\text{is~Euclidean~self-dual} & \text{if}~\Lambda~\text{is~lattice~of~type}~A_n~\text{for~even}~n,\\
		C~\text{is~Type~II} & \text{if}~\Lambda~\text{is~lattice~of~type}~D_n~\text{for~odd}~n,\\
		C~\text{is~Type~II} & \text{if}~\Lambda~\text{is~lattice~of~type}~D_n~\text{in~the~case}~n\in 2\Z~\text{and}~n\notin 4\Z ,\\
		C~\text{is~Type IV} & \text{if}~\Lambda~\text{is~lattice~of~type}~D_n~\text{in~the~case}~n\in 4\Z,\\
		C~\text{is~Euclidean~self-dual} & \text{if}~\Lambda~\text{is~lattice~of~type}~E_6,\\
		C~\text{is~Type~II} & \text{if}~\Lambda~\text{is~lattice~of~type}~E_7.
	\end{cases}
	\end{align*}
\end{theorem}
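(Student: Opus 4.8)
The plan is to realise $\rho$ as the projection onto the discriminant group and then to reduce both defining properties of $\Gamma_C$—unimodularity and evenness—to invariants of the discriminant form. For each listed root lattice the finite abelian group $\Lambda^*/\Lambda$ has order $\disc\Lambda$, which equals $|R|$ in every case ($n+1$, $4$, $3$, $2$ respectively), and its additive group is isomorphic to that of $R$; I take $\rho\colon\Lambda^*\to R$ to be the quotient map followed by a fixed such isomorphism. Then $\ker(\rho^{\oplus m})=\Lambda^{\oplus m}$, so $\Lambda^{\oplus m}\subseteq\Gamma_C\subseteq(\Lambda^*)^{\oplus m}$ with $[\Gamma_C:\Lambda^{\oplus m}]=|C|$, and the index formula $\disc(\Lambda^{\oplus m})=[\Gamma_C:\Lambda^{\oplus m}]^2\disc(\Gamma_C)$ yields
\[
\disc(\Gamma_C)=\frac{|R|^m}{|C|^2}.
\]

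I would first handle integrality and the determinant. For $x,y\in\Lambda^*$ the inner product $\langle x,y\rangle$ is well defined modulo $\Z$ as a function of the classes $\rho(x),\rho(y)$, so it descends to a nondegenerate bilinear form $b\colon R\times R\to\Q/\Z$ that is a scalar multiple of the standard inner product on $R$. Working coordinatewise, $\Gamma_C$ is integral if and only if $b$ vanishes on $C\times C$, i.e. $C$ is self-orthogonal. Since $b$ is nondegenerate and $R$ is a finite Frobenius ring, a self-orthogonal code obeys $|C|\le|R|^{m/2}$, with equality exactly when $C$ is self-dual; comparison with the displayed identity shows that $\Gamma_C$ is integral with $\disc(\Gamma_C)=1$, hence unimodular, precisely when $C$ is self-dual.

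Next I would handle evenness. Because $\Lambda^{\oplus m}$ is even, the squared norm $\langle x,x\rangle$ of $x\in\Lambda^*$ is well defined modulo $2\Z$ as a function of $\rho(x)$, giving the discriminant quadratic form $q\colon R\to\Q/2\Z$ whose associated bilinear form is $b$. Every vector of $\Gamma_C$ lies over some $c\in C$ and has norm congruent to $\sum_i q(c_i)$ modulo $2\Z$, so $\Gamma_C$ is even if and only if $\sum_i q(c_i)\equiv0\pmod{2\Z}$ for all $c\in C$. Once $C$ is self-dual this sum lies in $\Z/2\Z$, and the whole statement reduces to a dichotomy governed by the order and structure of $\Lambda^*/\Lambda$: when the order is odd (type $A_n$ with $n$ even, and $E_6$) the minimal coset norms have even numerators and a short parity computation shows self-duality already forces $\sum_i q(c_i)\in2\Z$, so Euclidean self-duality suffices; when the order is even in the cyclic cases ($A_n$ and $D_n$ with $n$ odd, and $E_7$) evenness is a genuine extra divisibility on the coordinates, which is exactly the Type II condition; and in the non-cyclic case $D_n$ with $n$ even it is the Type II or Type IV condition according to $n\bmod4$.

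The main obstacle is this last, case-by-case step. It requires computing $q$ explicitly on each discriminant group from the standard minimal coset representatives: the glue vectors of $A_n$ have norms $i(n+1-i)/(n+1)$, the spinor class of $D_n$ has norm $n/4$, and $E_6$, $E_7$ have minimal representatives of norms $4/3$ and $3/2$. One then matches the vanishing of $\sum_i q(c_i)$ modulo $2\Z$ against the definition of Type II, Type IV, or Euclidean self-duality over the pertinent ring, with the parity of $n$ entering through the value of $q$ on the generator (for $A_n$ through the factor $1/(n+1)$, for $D_n$ through the residue of $n$ modulo $8$). The delicate case is $D_n$ with $n$ even, where $\Lambda^*/\Lambda$ is not cyclic: here $q$ must be evaluated on the vector class, the spinor class and their sum, and recording this datum faithfully is precisely why the ring $\F_2+u\F_2$, rather than $\Z/2\Z\times\Z/2\Z$, is the correct choice.
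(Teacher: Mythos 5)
Your route is genuinely different from the paper's: where the paper fixes explicit dual bases $(f_1^*,\dots,f_n^*)$ for each root lattice, computes $b(x_i,y_i)$ in coordinates (Lemmas \ref{Anlem1}, \ref{Dnlem1}, \ref{Dn2lem1}, \ref{E6lem1}, \ref{E7lem1}) and proves $\Gamma_C^*=\Gamma_{C^{\perp}}$ directly (Lemma \ref{Anlem2} and its analogues), you work with the discriminant group $\Lambda^*/\Lambda$, its torsion forms $b\colon R\times R\to\Q/\Z$ and $q\colon R\to\Q/2\Z$, and the index formula $\disc(\Gamma_C)=|R|^m/|C|^2$, so that unimodularity falls out of a counting argument over a Frobenius ring rather than an explicit dual-lattice computation. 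For the cyclic cases ($A_n$; $D_n$, $n$ odd; $E_6$; $E_7$) this is sound: there $b$ really is a unit multiple of $ab/|R| \bmod \Z$, isotropy of $C$ under $b$ coincides with Euclidean self-orthogonality, and your coset norms $i(n+1-i)/(n+1)$, $n/4$, $4/3$, $3/2$ reproduce the paper's weight criteria (for $A_n$ with $n$ odd you should also note that $\mathrm{wt}_{\mathrm{E}}$ is well defined modulo $2(n+1)$ on residues, which uses $2\mid n+1$).

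The genuine gap is in the case $D_n$, $n$ even, which your reduction cannot treat as stated. There $\Lambda^*/\Lambda\simeq(\Z/2\Z)^{\oplus 2}$ and the descended form $b$ is \emph{not} ``a scalar multiple of the standard inner product on $R=\F_2+u\F_2$'': that phrase does not even typecheck ($b$ is $\Q/\Z$-valued, the ring product $R$-valued). What is true is that $b(\rho(x),\rho(y))=\chi(\rho(x)\cdot\rho(y))$ for an additive character $\chi$ of $R$ whose choice depends on $n\bmod 4$; concretely, the paper's Lemma \ref{Dn2lem1} shows $B(x,y)\in\Z$ is equivalent to $\rho^{\oplus m}(x)\cdot\rho^{\oplus m}(y)\in\{0,1+u\}$ when $n\equiv 2\bmod 4$ and to $\rho^{\oplus m}(x)\cdot\rho^{\oplus m}(y)\in\{0,1\}$ when $4\mid n$. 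Consequently, integrality of $\Gamma_C$ only tells you that all products $c\cdot c'$ lie in $\ker\chi$, which strictly contains $\{0\}$, so ``$C$ is self-orthogonal''---the input to your counting bound $|C|\le|R|^{m/2}$---does not follow immediately. The paper closes exactly this hole with Lemma \ref{Dn2lem2}: since $C$ is an $R$-module and $\ker\chi$ contains no nonzero ideal, $x\cdot y\in\ker\chi$ for all $y\in C$ forces $x\in C^{\perp}$ (if $x\cdot y=1$ then $x\cdot(1+u)y=1+u$, contradiction, and similarly in the other case). You in fact have the right tool on the table---$\chi$ is a generating character of the Frobenius ring $R$---but you invoke the Frobenius property only for $|C|\,|C^{\perp}|=|R|^m$, whereas it must also be used (or replaced by the two-line module argument above) to identify $\{x\mid \chi(x\cdot y)=0 \ \text{for all } y\in C\}$ with $C^{\perp}$. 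Once that lemma is added, your evenness bookkeeping via $q(u)=1$ and $q(1)=q(1+u)=n/4\bmod 2\Z$ does yield $\mathrm{wt}_{\mathrm{L}}\in 4\Z$ (Type II) for $n\equiv 2\bmod 4$ and the Type IV conditions for $4\mid n$, matching Theorem \ref{Dn2the1}, with the paper's remark that $\mathrm{wt}_{\mathrm{H}}$-evenness and $\mathrm{wt}_{\mathrm{B}}$-evenness agree on Euclidean self-dual codes taking care of the subcase $8\mid n$.
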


\begin{remark}
	Let $\Lambda$ be a lattice of type $E_8$. Since $\Lambda=\Lambda^*$, $\Lambda^{\oplus m}$ is the only lattice between $\Lambda^{\oplus m}$ and $(\Lambda^*)^{\oplus m}$.
\end{remark}
\begin{remark}\label{rem1}
	Let $n$ be an even integer and $\Lambda$ be a lattice of type $D_n$. Since $\Lambda^*/\Lambda \cong (\Z/2\Z)^{\oplus 2}$, $R$ can be not only $\F_2+u\F_2$ but also $\F_4$ and $\F_2\times \F_2$ in Theorem \ref{mainthe}. We give the results for each case of $\F_4$ and $\F_2\times \F_2$ in subsecton \ref{Dnsub3} and \ref{Dnsub4}.
\end{remark}

In addition to the results, we introduce some examples of application to Hilbert modular form.
In \cite{BayerFluckiger1999LatticesAN},
a lattice of type $D_{4}$ (resp. of type $E_{6}$) is constructed as a fractional ideal of $\Q(\zeta_{8})$ (resp. of $\Q(\zeta_{9})$). By using this lattice and the construction of lattices in \cref{mainthe}, we can construct an even unimodular lattice as a module over the ring of integers in the field. The theta function of this unimodular lattice is a Hilbert modular form for the special linear group over the ring of integers in the field. We also give a relation between a weight enumerator of a code and the theta function.

The organization of this paper is as follows.
In \S2, we recall some basic definitions.
In \S3, we give a proof of \cref{mainthe} and Remark \ref{rem1}.
In \S4, we introduce some examples of application of lattices constructed in \S3 to Hilbert modular form.

\subsection*{Notation}
In this paper,
for every field extension $K/k$, the usual trace map $\Tr _{K/k}:K\rightarrow k$ and the usual norm map $N_{K/k}:K\rightarrow k$ are often abbreviated to $\Tr$ and $N$ if there is no fear of confusion.
For every field $k$ and every positive integer $n$, $k^{\oplus n}$ denotes the direct sum of $n$ copies of $k$.

As usual, $\Z, \Q, \R, \C$ denote the ring of integers,
the field of rational numbers,
the field of real numbers,
and the field of complex numbers,
respectively.
We denote the complex conjugate on $\C$ by $\bar{\cdot}:\C \rightarrow \C$. 
For every integer $n\neq 0$, set $\zeta_{n}:=\exp(2\pi \sqrt{-1}/n)\in \C$.
$\HH$ denotes upper half plane, that is $\HH =\{\tau \in \C ~|~ \mathrm{Im}(\tau)>0\}$. 

A subfield $F$ of $\C$ is called a number field if its degree $[F : \Q]$ over $\Q$ is finite. 
For every number field $F$, $\Z_{F}$ denotes the ring of integers in $F$, that is, the subring of $F$ consisting of the roots of the monic polynomials with coefficients in $\Z$.   
A finitely generated $\Z_{F}$-submodule of $\Z_{F}$ (resp. of $F$) is called an ideal of $\Z_{F}$ (resp. a fractional ideal of $F$).

\section{Basic definitions}
First, we recall some basic definitions.
For details, see e.g.\ \cite[Ch.\ 1]{Ebeling}, \cite{MR1695075}, \cite{MACWILLIAMS1978288}, \cite{Dougherty1999TypeIS}.

\begin{definition}
Let $\Lambda$ be a free $\Z$-module of rank $n$
and $b : \Lambda \times \Lambda \to \R$ be a bilinear form.
\begin{enumerate}
\item
A pair $\Lambda = (\Lambda, b)$ is called a lattice
if $b$ is positive-definite and symmetric,
that is,
$b(x, x) > 0$ for every $x \in \Lambda \setminus \{ 0 \}$
and $b(x, y) = b(y, x)$ for every $x, y \in \Lambda$.
We often abbreviate $(\Lambda, b)$ to $\Lambda$.
\item
A $\Z$-submodule $\Lambda'$ of a lattice $\Lambda$ is called a sublattice of $\Lambda$.
\item
A lattice $\Lambda$ is called integral 
if $b(x,y) \in \Z$ for every $x, y \in \Lambda$.

\item
An integral lattice $\Lambda$ is called even
if $b(x,x) \in 2\Z$ for every $x \in \Lambda$,
and odd otherwise.

\item
A lattice $\Lambda$ is called of type $A_{n}$
if $\Lambda$ has a basis $(e_{1}, \dots, e_{n})$ such that
\[
	b( e_{i}, e_{j} )
	= \begin{cases}
	2 & \text{if $\lvert j - i \rvert = 0$, i.e., $j = i$}, \\
	-1 & \text{if $\lvert j - i \rvert = 1$}, \\
	0 & \text{otherwise}.
	\end{cases}
\]

\item
A lattice $\Lambda$ is called of type $D_{n}$ ($n \geq 4$)
if $\Lambda$ has a basis $(e_{1}, \dots, e_{n})$ such that
\[
	b( e_{i}, e_{j} )
	= \begin{cases}
	2 & \text{if $\lvert j - i \rvert = 0$, i.e., $j = i$}, \\
	-1 & \text{if ($\lvert j - i \rvert = 1$ and $\{ i, j \} \neq \{ n-1, n \}$) or $\{ i, j \} = \{ n-2, n \}$}, \\
	0 & \text{otherwise}.
	\end{cases}
\]

\item
A lattice $\Lambda$ is called of type $E_{n}$ ($n = 6, 7, 8$)
\[
	b( e_{i}, e_{j} )
	= \begin{cases}
	2 & \text{if $\lvert j - i \rvert = 0$, i.e., $j = i$}, \\
	-1 & \text{if ($\lvert j - i \rvert = 1$ and $\{ i, j \} \neq \{ n-1, n \}$) or $\{ i, j \} = \{ n-3, n \}$}, \\
	0 & \text{otherwise}.
	\end{cases}
\]

\item
For a lattice $\Lambda$ of rank $n$,
its dual lattice $\Lambda^{*}$ is defined by
\[
	\Lambda^{*} := \left\{ x \in \R^{\oplus n} \setmid b(x,y) \in \Z \ \text{for every} \ y \in \Lambda \right\},
\]
where we extend $b$ $\R$-bilinearly to $(\Lambda \otimes_{\Z} \R) \times (\Lambda \otimes_{\Z} \R)$.

\item
A lattice $\Lambda$ is called unimodular if $\Lambda^*=\Lambda$.
\end{enumerate}
\end{definition}

Note that a lattice $\Lambda$ is integral if and only if $\Lambda \subset \Lambda^{*}$.

We summarize the well-known structure theorem of $\Lambda^{*}/\Lambda$ for $\Lambda$ of type $A_{n}, D_{n}, E_{n}$ as follows.
\begin{lemma}\label{ADElemma}
We have the following isomorphisms of $\Z$-modules:
\[
	\Lambda^{*}/\Lambda
	\simeq \begin{cases}
		\Z/(n+1)\Z & \text{if $\Lambda$ is of type $A_{n}$}, \\
		(\Z/2\Z)^{\oplus 2} & \text{if $\Lambda$ is of type $D_{n}$ for even $n$}, \\
		\Z/4\Z & \text{if $\Lambda$ is of type $D_{n}$ for odd $n$}, \\
		\Z/(9-n)\Z & \text{if $\Lambda$ is of type $E_{n}$}.
		\end{cases}
\]
\end{lemma}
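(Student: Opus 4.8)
The group $\Lambda^{*}/\Lambda$ is finite because $\Lambda$ is integral, so that $\Lambda \subseteq \Lambda^{*}$ with both lattices of full rank in $\Lambda \otimes_{\Z} \R$. The plan is to reduce everything to the Gram matrix $G = (b(e_{i}, e_{j}))$ in the given basis. Since the dual basis $(e_{i}^{*})$ satisfies $e_{i}^{*} = \sum_{j} (G^{-1})_{ji} e_{j}$, the index is $[\Lambda^{*} : \Lambda] = \det G$, and, more usefully, the isomorphism type of $\Lambda^{*}/\Lambda$ is read off from the Smith normal form of $G$: if the invariant factors are $d_{1} \mid \cdots \mid d_{n}$, then $\Lambda^{*}/\Lambda \cong \bigoplus_{i} \Z/d_{i}\Z$. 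So the two tasks are (i) compute $\det G$ in each case and (ii) determine the group structure, either through the invariant factors or by exhibiting explicit generators of the correct orders in a convenient coordinate model.

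First I would record the determinants. In all three families $G$ is tridiagonal apart from one extra branch entry, so a cofactor expansion along the last row (equivalently a short induction on $n$) gives $\det G = n+1$ for $A_{n}$, $\det G = 4$ for $D_{n}$, and $\det G = 9-n$ for $E_{n}$, with $\det G = 1$ for $E_{8}$. This already fixes the orders of the groups, and it immediately settles $E_{6}$ and $E_{7}$: the orders $3$ and $2$ are prime, so the groups are automatically cyclic, equal to $\Z/(9-n)\Z$, while $E_{8}$ is unimodular.

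For $A_{n}$ I would use the realization $\Lambda = \{x \in \Z^{n+1} : \sum_{i} x_{i} = 0\}$ with $e_{i} = \epsilon_{i} - \epsilon_{i+1}$, where $\epsilon_{1}, \dots, \epsilon_{n+1}$ is the standard basis. The glue vector $g = \epsilon_{1} - \tfrac{1}{n+1}\sum_{j} \epsilon_{j}$ lies in $\Lambda^{*}$ (one checks $b(g, e_{i}) \in \Z$) and in $\Lambda \otimes_{\Z} \R$, and since $(n+1)g \in \Lambda$ while $kg \notin \Z^{n+1}$ for $1 \leq k \leq n$, the class of $g$ has order exactly $n+1$; as this equals the order of the group, $\Lambda^{*}/\Lambda \cong \Z/(n+1)\Z$. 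The only case requiring genuine care is $D_{n}$. Realize it as $\Lambda = \{x \in \Z^{n} : \sum_{i} x_{i} \in 2\Z\}$ with $e_{i} = \epsilon_{i} - \epsilon_{i+1}$ for $i < n$ and $e_{n} = \epsilon_{n-1} + \epsilon_{n}$; then $\Lambda^{*} = \Z^{n} + \Z \cdot h$ with $h = \tfrac12(\epsilon_{1} + \cdots + \epsilon_{n})$, and the four cosets are represented by $0$, $\epsilon_{n}$, $h$, and $h - \epsilon_{n}$. The decisive point is that $2h = \epsilon_{1} + \cdots + \epsilon_{n}$ has coordinate sum $n$, hence lies in $\Lambda$ precisely when $n$ is even: for even $n$ both $\epsilon_{n}$ and $h$ have order $2$ and are independent, giving $(\Z/2\Z)^{\oplus 2}$, whereas for odd $n$ one has $2h \equiv \epsilon_{n} \pmod{\Lambda}$, so $h$ has order $4$ and $\Lambda^{*}/\Lambda \cong \Z/4\Z$.

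The main obstacle is concentrated in this $D_{n}$ step: one must correctly identify $\Lambda^{*}$ in coordinates and track the order of the half-sum glue vector $h$ as the parity of $n$ varies, since this is exactly where the cyclic and the elementary abelian structures diverge. Everything else is the determinant computation together with the remark that groups of prime order are cyclic.
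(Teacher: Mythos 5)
Your proposal is correct. Note that the paper does not actually prove this lemma at all: its ``proof'' is a one-line citation to Ebeling's book (the proof of (v)$\Leftrightarrow$(vi) in Proposition~1.5), so you have supplied a complete argument where the paper defers to the literature; your route --- determinant of the Gram matrix fixes $\lvert \Lambda^*/\Lambda\rvert$, prime order settles $E_6$ and $E_7$ outright, and explicit glue vectors in the standard coordinate models settle $A_n$ and $D_n$ --- is essentially the standard textbook argument found in that reference. All the computations check out: $b(g,e_i)\in\Z$ and $kg\notin\Z^{n+1}$ for $1\le k\le n$ give an element of order $n+1$ in a group of order $n+1$ for $A_n$, and the parity analysis of $2h=\epsilon_1+\cdots+\epsilon_n$ correctly separates $\Z/4\Z$ (odd $n$, where $2h-\epsilon_n$ has even coordinate sum $n-1$, so $2h\equiv\epsilon_n$) from $(\Z/2\Z)^{\oplus 2}$ (even $n$). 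The one assertion you state without justification is $\Lambda^*=\Z^n+\Z h$ for $D_n$; this is routine but worth a line: one checks $h\cdot e_i\in\Z$ and $\epsilon_j\cdot e_i\in\Z$ directly, so $\Lambda\subset\Z^n+\Z h\subseteq\Lambda^*$, and since $[\Z^n+\Z h:\Lambda]=4=\det G=[\Lambda^*:\Lambda]$, the inclusion is an equality. With that line added, your proof is self-contained and complete, and in the context of this paper it is arguably an improvement, since the lemma is the structural foundation for the ring choices $R$ in Theorem~1.1.
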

\begin{proof}
	See e.g. \cite[the proof of (v)$\Leftrightarrow$(vi) in Proposition 1.5]{Ebeling}.
\end{proof}

Let $\F_2+u\F_2=\{0,1,u, 1+u\}$ be a ring of order four with $u^2=0$, $\F_4=\{0,1,\omega, \bar{\omega}\}$ be the finite field of order four with $\bar{\omega}=\omega^2=\omega+1$ and $\F_2 \times \F_2=\{(0,0),(0,1),(1,0),(1,1)\}$ be a ring of order four,
where $(a,b)+(c,d)=(a+c,b+d),~(a,b)\cdot(c,d)=(ac,bd)~\text{for}~a,b,c,d\in \F_2$. 
\begin{definition}Let $n \geq 2$ and $m\geq 1 $ be integers.
\begin{enumerate}
	\item
	The number of coordinates $i$ in an element $x \in (\Z/n\Z)^{\oplus m}$ is denoted by $l_i(x)$
	\item
	The Euclidean weight $\mathrm{wt}_{\mathrm{E}}(x)$ of an element $x \in (\Z/n\Z)^{\oplus m}$ is defined by 
\[ 
\mathrm{wt}_{\mathrm{E}}(x):=(1^2) l_1(x)+\cdots +(n^2) l_n(x). 
\]
	\item
	The Lee composition of an element $x= (x_1, \dots ,x_m)\in (\F_2+u\F_2)^{\oplus m}$
	is defined as $(N_0(x), N_1(x), N_2(x))$ where $N_0$ is the number of $x_i=0$, $N_2(x)$ is the number of $x_i=u$, and $N_1(x)=m-N_0(x)-N_2(x)$.
	\item
	The Lee composition of an element $x= (x_1, \dots ,x_m)\in \F_4^{\oplus m}$
	is defined as $(N_0(x), N_1(x), N_2(x))$ where $N_0$ is the number of $x_i=0$, $N_2(x)$ is the number of $x_i=1$, and $N_1(x)=m-N_0(x)-N_2(x)$.
	\item
	The Lee composition of an element $x= (x_1, \dots ,x_m)\in (\F_2\times \F_2)^{\oplus m}$
	is defined as $(N_0(x), N_1(x), N_2(x))$ where $N_0$ is the number of $x_i=(0,0)$, $N_2(x)$ is the number of $x_i=(1,1)$, and $N_1(x)=m-N_0(x)-N_2(x)$.
	\item
	The Hamming weight $\mathrm{wt}_{\mathrm{H}}(x)$ of an element $x\in (\F_2+u\F_2)^{\oplus m},~ \F_4^{\oplus m} \text{ or } (\F_2\times \F_2)^{\oplus m}$ is defined as $N_1(x)+N_2(x)$. 
	\item
	The Lee weight $\mathrm{wt}_{\mathrm{L}}(x)$ of an element $x\in (\F_2+u\F_2)^{\oplus m},~ \F_4^{\oplus m} \text{ or } (\F_2\times \F_2)^{\oplus m}$ is defined as $N_1(x)+2N_2(x)$.
	\item
	The Bachoc weight $\mathrm{wt}_{\mathrm{B}}(x)$ of an element $x\in (\F_2+u\F_2)^{\oplus m},~ \F_4^{\oplus m} \text{ or } (\F_2\times \F_2)^{\oplus m}$ is defined as $2N_1(x)+N_2(x)$.  
		\item
	An overbar denotes conjugation; that is, if $x\in \F_4$ and $(a,b) \in \F_2\times \F_2$ then $\overline{x}=x^2$ and $\overline{(a,b)}=(b,a)$. A similar notation is used for elements, sets, etc. Thus $\overline{x}=(\overline{x_1}, \dots , \overline{x_m})\in \F_4^{\oplus m}$, $\overline{C}=\{\overline{(a,b)} ~|~ (a,b)\in C\}$.	
\end{enumerate}
\end{definition}

\begin{definition}Let $n \geq 2$ and $m\geq 1$ be integers and $R$ be either $\Z/n\Z$, $\F_2+u\F_2$, $\F_4$, or $\F_2\times \F_2$. 
\begin{enumerate}
\item 	  
		A code $C$ of length $m$ over $R$ (or a $R$-code $C$ of length $m$ ) is a $R$-submodule of $R^{\oplus m}$. 
\item
		The elements of $C$ are called codewords. 
\item
	 The inner product of $x = (x_1, \dots ,x_m),~ y = (y_1,\dots ,y_m)$ in $R^{\oplus m} $ is given by
\[
x\cdot y:=\sum_{i=1}^{m}x_i y_i ~.
\]
	\item
	The dual code of a $R$-code $C$ of length $m$ is defined by
\[
C^{\perp}:=\{ x\in R^{\oplus m} ~|~ x\cdot y=0 ~\mathrm{for~ every}~ y \in C \}.
\]	
	\item A $R$-code $C$ is called Euclidean self-dual if $C=C^{\perp}$ 	
	\item A $R$-code $C$ is called Hermitian self-dual if $C=\overline{C}^{\perp}$.
	\item A Euclidean self-dual code $C$ over $\Z/n\Z$ is called Type II if $\mathrm{wt}_{\mathrm{E}}(x)\in 2n\Z$ for every $x\in C$.
	\item A Euclidean self-dual code $C$ over $\F_2+u\F_2$ is called Type II if $\mathrm{wt}_{\mathrm{L}}(x)\in 4\Z$ for every $x\in C$.
	\item A Euclidean self-dual code $C$ over $\F_2+u\F_2$ is called Type IV if $\mathrm{wt}_{\mathrm{H}}(x)\in 2\Z$ for every $x\in C$.
	\item A Euclidean self-dual code $C$ over $\F_4$ is called Type II if $\mathrm{wt}_{\mathrm{L}}(x)\in 4\Z$ for every $x\in C$.
	\item A Euclidean self-dual code $C$ over $\F_2\times \F_2$ is called Type II if $\mathrm{wt}_{\mathrm{L}}(x)\in 4\Z$ for every $x\in C$.
	\item An Hermitian self-dual code $C$ over $\F_2\times \F_2$ is called Type IV if $\mathrm{wt}_{\mathrm{H}}(x)\in 2\Z$ for every $x\in C$.
\end{enumerate}
\end{definition}

\begin{definition}
Let $F$ be a number field.
\begin{enumerate}
\item
$F$ is called totally real if
every field homomorphism $F \to \C$ has the image in $\R$.

\item
$F$ is called CM (complex multiplication)  if $F$ has no field homomorphisms $F \to \R$
and $F$ has a totally real subfield $F^{+}$ such that $[F : F^{+}] = 2$.
\end{enumerate}
\end{definition}

If $F$ is a CM number field,
then $F/F^{+}$ is a Galois extension whose Galois group $\Gal(F/F^{+})$ is generated by the complex conjugate.

\begin{lemma}
Suppose that $F$ is a totally real or CM number field.
Then the bilinear form
\[
	\Tr = \Tr_{F} : F \times F \to \Q; (x, y) \mapsto \Tr_{F/\Q}(x\bar{y})
\]
is positive-definite and symmetric.
In particular,
for every $\Z$-submodule $\Lambda$ of $F$,
the pair $(\Lambda, \Tr|_{\Lambda \times \Lambda})$ is a lattice.
\end{lemma}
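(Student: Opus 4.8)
The plan is to verify the two defining properties of the form $b(x,y) := \Tr_{F/\Q}(x\bar y)$ separately — symmetry and positive-definiteness — and then to deduce the ``in particular'' clause by restriction. Throughout I would use the two standard descriptions of the trace: as a $\Q$-linear functional $\Tr_{F/\Q}$ is invariant under every $\Q$-automorphism of $F$, and as a sum over embeddings one has $\Tr_{F/\Q}(\alpha) = \sum_{\sigma} \sigma(\alpha)$, where $\sigma$ ranges over the $[F:\Q]$ field homomorphisms $F \to \C$. As a preliminary remark, the form is well defined and $\Q$-valued: writing $c$ for complex conjugation restricted to $F$ — which is the identity when $F$ is totally real, and the generator of $\Gal(F/F^+)$ when $F$ is CM, by the observation preceding the lemma — the product $x\bar y = x\,c(y)$ lies in $F$, so $\Tr_{F/\Q}(x\bar y) \in \Q$.

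For symmetry I would note that $\overline{x\bar y} = \bar x\,y = y\bar x$, that is, $y\bar x = c(x\bar y)$, using that $c$ is an involution. Since $c$ is a $\Q$-automorphism of $F$, the trace is invariant under it, so $b(y,x) = \Tr_{F/\Q}(c(x\bar y)) = \Tr_{F/\Q}(x\bar y) = b(x,y)$. (Equivalently, the embedding sum gives $b(y,x) = \overline{b(x,y)}$, and $b(x,y)\in\Q\subset\R$ is its own conjugate.)

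For positive-definiteness the crucial step — and the one I expect to be the main obstacle — is the identity $\sigma(\bar\alpha) = \overline{\sigma(\alpha)}$ for every embedding $\sigma: F \to \C$ and every $\alpha \in F$; this is where the CM (or totally real) hypothesis is genuinely used, since it fails for a general number field. When $F$ is totally real it is immediate, as $\bar\alpha = \alpha$ and $\sigma(\alpha)\in\R$. When $F$ is CM I would argue that the two embeddings $\overline{\sigma(\cdot)}$ and $\sigma\circ c$ agree on $F^+$ (because $c$ fixes $F^+$ while $\sigma(F^+)\subset\R$, as $F^+$ is totally real) and both differ from $\sigma$ (because $F$ is totally imaginary, so $\sigma(F)\not\subset\R$); since $[F:F^+]=2$ forces exactly two extensions of $\sigma|_{F^+}$ to $F$, the two must coincide, yielding $\sigma\circ c = \overline{\sigma(\cdot)}$. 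Granting this, $b(x,x) = \sum_\sigma \sigma(x)\sigma(\bar x) = \sum_\sigma \sigma(x)\overline{\sigma(x)} = \sum_\sigma |\sigma(x)|^2 \ge 0$, with equality iff $\sigma(x)=0$ for every $\sigma$, iff $x=0$ (each $\sigma$ being injective).

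Finally, for the ``in particular'' clause, restricting a symmetric positive-definite form to any $\Z$-submodule $\Lambda\subset F$ clearly keeps it symmetric and positive-definite, so it only remains to see that $\Lambda$ is a free $\Z$-module of finite rank. As a submodule of the characteristic-zero field $F$ it is torsion-free, and a finitely generated torsion-free module over the PID $\Z$ is free of finite rank by the structure theorem (the relevant $\Lambda$, such as the fractional ideals appearing in the applications, being finitely generated). Hence $(\Lambda, \Tr|_{\Lambda\times\Lambda})$ is a lattice.
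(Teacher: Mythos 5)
Your proof is correct, and it is noticeably more complete than the paper's. The paper declares symmetry ``the only non-trivial statement'' and verifies it through the tower formula $\Tr_{F/\Q}=\Tr_{F^{+}/\Q}\circ\Tr_{F/F^{+}}$, computing $\Tr_{F/F^{+}}(y\bar{x})=y\bar{x}+x\bar{y}$, which is visibly symmetric in $x$ and $y$; your route to symmetry --- $y\bar{x}=c(x\bar{y})$ together with invariance of the trace under the $\Q$-automorphism $c$ --- is an equivalent one-liner. Where you genuinely diverge is positive-definiteness: the paper omits it as trivial, while you correctly identify the identity $\sigma\circ c=\overline{\sigma(\cdot)}$ for every embedding $\sigma\colon F\to\C$ as the crux, and your proof of it (both maps extend $\sigma|_{F^{+}}$, neither equals $\sigma$ --- by injectivity of $\sigma$ for $\sigma\circ c$, and because $\sigma(F)\not\subset\R$ for $\overline{\sigma(\cdot)}$ --- and $[F:F^{+}]=2$ allows only two extensions) is exactly right. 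This is where the CM hypothesis actually bites: for instance, on $F=\Q(\sqrt{2})$ with the order-two automorphism $c(\sqrt{2})=-\sqrt{2}$ the form $\Tr_{F/\Q}(x\,c(x))$ is symmetric but indefinite ($x=\sqrt{2}$ gives $-4$), so positivity is arguably the less trivial half of the lemma, and your write-up fills a real gap in the paper's two-line proof. Your closing remark is also apt: as literally stated (``every $\Z$-submodule'') the lemma is slightly too generous, since e.g.\ $\Q\subset F$ is a $\Z$-submodule that is not free of finite rank; your restriction to finitely generated submodules (which covers the fractional ideals used in the applications) is the right reading, and the paper's proof does not address this point at all.
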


\begin{proof}
The only non-trivial statement is the symmetry of $\Tr$ for a CM number field $F$,
which we can check as follows:
\[
	\Tr_{F/\Q}(y\bar{x})
	= \Tr_{F^{+}/\Q}(\Tr_{F/F^{+}}(y\bar{x}))
	= \Tr_{F^{+}/\Q}(y\bar{x}+x\bar{y})
	= \Tr_{F^{+}/\Q}(\Tr_{F/F^{+}}(x\bar{y}))
	= \Tr_{F/\Q}(x\bar{y}).
\]
\end{proof}

\section{Lattices and codes}\label{Sect2}
Let $\Lambda$ be a lattice of type $A_n$, $D_n$, $E_6$ or $E_7$.
In this section, we construct lattices between $\Lambda^{\oplus m}$ and $(\Lambda^*)^{\oplus m}$ from codes corresponding to $\Lambda^*/\Lambda$, 
and consider their properties.

\subsection{Lattices of type $A_n$ and $\Z/(n+1)\Z$-codes}
Let $n$ be a positive integer and $(\Lambda,b)$ be a lattice of type $A_n$. Then $\Lambda$ has a basis $(e_1, \dots ,e_n)$ such that 
\[
	b( e_{i}, e_{j} )
	= \begin{cases}
	2 & \text{if $\lvert j - i \rvert = 0$, i.e., $j = i$}, \\
	-1 & \text{if $\lvert j - i \rvert = 1$}, \\
	0 & \text{otherwise}.
	\end{cases}
\]
We define
\[
f_i=\sum_{l=i}^{n}e_l \hspace{10pt} (1\leq i \leq n),
\]
and
\[
f_i^*=f_i-\frac{1}{n+1}\sum_{l=1}^{n}f_l \hspace{10pt} (1\leq i \leq n).
\] 	 
We have 
\[
b(f_i,f_j^*)=\delta_{ij} \hspace{10pt} (\mathrm{Kronecker~delta}).
\]
Hence,
$(f_1, \dots, f_n)$ is a basis of $\Lambda$, and $(f_1^*, \dots, f_n^*)$ is a basis of $\Lambda^*$. 

Let $m$ be a positive integer.
We consider the mapping
\[
\begin{array}{rccc}
\rho^{\oplus m} \colon & (\Lambda^*)^{\oplus m}            &\longrightarrow  & (\Z /(n+1)\Z)^{\oplus m}        \\
            & \rotatebox{90}{$\in$}&                 & \rotatebox{90}{$\in$} \\
            &\left(\displaystyle \sum_{j=1}^{n} x_{ij} f^*_j\right)_{1\leq i \leq m}  & \longmapsto &\left(\left( \displaystyle \sum_{j=1}^{n} x_{ij} \right) \mathrm{mod}~(n+1) \right)_{1\leq i \leq m ~.} 
\end{array}
\]
This is an additive homomorphism. Additionally, the kernel of this homomorphism is $\Lambda^{\oplus m}$.

Let $C$ be a $\Z/(n+1)\Z$-code of length $m$. Then define 
\[
\Gamma_C:= (\rho^{\oplus m})^{-1}(C) \subset (\Lambda^*)^{\oplus m}.
\]

Let $x,y \in (\Lambda^*)^{\oplus m}, ~x=(x_1,\dots ,x_m), ~y=(y_1,\dots ,y_m)$ with $x_i,y_i \in \Lambda^*$ for $i=1,\dots ,m$. We define a symmetric bilinear form on $(\Lambda^*)^{\oplus m}$ by
\[
B(x,y)=\sum_{i=1}^{m}b(x_i,y_i).
\]
Then the pair $(\Gamma_C, B)$ is a lattice of rank $mn$.

We give some properties of the bilinear form $B$ and the lattice $\Gamma_C$.

\begin{lemma}\label{Anlem1}
	Let $x,y \in (\Lambda^*)^{\oplus m}$. Then
	\[
	B(x,y)\in \Z \Leftrightarrow \rho^{\oplus m}(x)\cdot \rho^{\oplus m}(y)=0, 
	\]
	\begin{eqnarray*}
B(x,x) \in 2\Z 
		\Leftrightarrow 
		\left\{
\begin{array}{ll}
\mathrm{wt}_{\mathrm{E}}(\rho^{\oplus m} (x))\in 2(n+1)\Z & \mathrm{if}~ n\notin 2\Z, \\
~&~ \\
\rho^{\oplus m} (x)\cdot \rho^{\oplus m} (x)=0 & \mathrm{if}~ n\in 2\Z.
\end{array}
\right.
	\end{eqnarray*}
\end{lemma}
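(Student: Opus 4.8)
The plan is to base everything on the single Gram identity
\[
	b(f_i^*, f_j^*) = \delta_{ij} - \frac{1}{n+1},
\]
which follows immediately from the given relation $b(f_i, f_j^*) = \delta_{ij}$: writing $f_i^* = f_i - \tfrac{1}{n+1}\sum_l f_l$ and expanding by linearity in the first argument gives $b(f_i^*, f_j^*) = b(f_i, f_j^*) - \tfrac{1}{n+1}\sum_l b(f_l, f_j^*) = \delta_{ij} - \tfrac{1}{n+1}$, since exactly one index $l\in\{1,\dots,n\}$ equals $j$. From this, writing $x_i = \sum_j x_{ij} f_j^*$, $y_i = \sum_j y_{ij} f_j^*$ and setting $S_{x_i} := \sum_j x_{ij}$, $S_{y_i} := \sum_j y_{ij}$, I would obtain
\[
	B(x,y) = \sum_{i,j} x_{ij} y_{ij} - \frac{1}{n+1}\sum_{i=1}^{m} S_{x_i} S_{y_i}.
\]
For the first equivalence the leading sum is an integer, so $B(x,y)\in\Z$ holds iff $(n+1)\mid \sum_i S_{x_i}S_{y_i}$; since $\rho^{\oplus m}(x) = (S_{x_i}\bmod(n+1))_i$ (and likewise for $y$), the quantity $\sum_i S_{x_i}S_{y_i}\bmod(n+1)$ is exactly $\rho^{\oplus m}(x)\cdot\rho^{\oplus m}(y)$, which yields the claim.

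For the second equivalence I would first record that $B(x,x)\bmod 2\Z$ depends only on $\rho^{\oplus m}(x)$: for $\lambda\in\Lambda^{\oplus m}=\ker\rho^{\oplus m}$ we have $B(x+\lambda,x+\lambda)=B(x,x)+2B(x,\lambda)+B(\lambda,\lambda)$, where $B(x,\lambda)\in\Z$ because each $x_i\in\Lambda^*$, $\lambda_i\in\Lambda$, and $B(\lambda,\lambda)\in 2\Z$ because $\Lambda$ of type $A_n$ is even; hence $B(x+\lambda,x+\lambda)\equiv B(x,x)\pmod{2\Z}$. Since every $f_j^*$ maps to the generator $1\in\Z/(n+1)\Z$, I may then replace $x$ by the representative $x_i = a_i f_1^*$, where $a_i\in\{0,\dots,n\}$ represents $\rho^{\oplus m}(x)_i$. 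Using $b(f_1^*,f_1^*)=\tfrac{n}{n+1}$ this gives
\[
	B(x,x) \equiv \frac{n}{n+1}\sum_{i=1}^m a_i^2 = \frac{n}{n+1}\,\mathrm{wt}_{\mathrm{E}}\!\left(\rho^{\oplus m}(x)\right) \pmod{2\Z},
\]
so with $W := \mathrm{wt}_{\mathrm{E}}(\rho^{\oplus m}(x)) = \sum_i a_i^2$ the problem reduces to deciding when $\tfrac{n}{n+1}W\in 2\Z$.

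The computation then splits on the parity of $n$. If $n$ is even, $n+1$ is odd, so $\tfrac{n}{n+1}W\in\Z$ forces $(n+1)\mid W$, say $W=(n+1)t$, and then $\tfrac{n}{n+1}W = nt\in 2\Z$; thus integrality already implies evenness, and $B(x,x)\in 2\Z \Leftrightarrow (n+1)\mid W \Leftrightarrow \rho^{\oplus m}(x)\cdot\rho^{\oplus m}(x)=0$, using $\rho^{\oplus m}(x)\cdot\rho^{\oplus m}(x)=W\bmod(n+1)$. If $n$ is odd, then $\gcd(n,2(n+1))=1$, whence $\tfrac{n}{n+1}W\in 2\Z \Leftrightarrow 2(n+1)\mid nW \Leftrightarrow 2(n+1)\mid W$, i.e. $W\in 2(n+1)\Z$. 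I expect the only genuinely delicate points to be the well-definedness of $B(x,x)\bmod 2\Z$ on cosets (which legitimizes the passage to the simple representative $a_i f_1^*$) and the coprimality bookkeeping $\gcd(n,2(n+1))=1$ in the odd case; the remainder is direct substitution into the Gram identity above.
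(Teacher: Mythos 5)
Your proposal is correct, and its backbone coincides with the paper's: both rest on the same Gram identity $b(f_i^*,f_j^*)=\delta_{ij}-\tfrac{1}{n+1}$ (the paper obtains it by substituting $f_j^*=f_j-\tfrac{1}{n+1}\sum_l f_l$ inside $b(x_i,y_i)$ rather than stating it separately), both reduce the first claim to $(n+1)\mid\sum_i S_{x_i}S_{y_i}$, and both end with the same parity split on $n$. The one genuine divergence is how the second equivalence is reduced to residues. The paper stays with the integer coordinates throughout: it rewrites $b(x_i,x_i)=\tfrac{n}{n+1}S_{x_i}^2-\sum_{j<l}2x_{ij}x_{il}$, so that $B(x,x)\equiv\tfrac{n}{n+1}\sum_i S_{x_i}^2 \pmod{2\Z}$ with $S_{x_i}\in\Z$ the actual sums, and then asserts the final equivalences directly; this silently uses the fact that $\tfrac{n}{n+1}S^2 \bmod 2\Z$ depends only on $S \bmod (n+1)$ (equivalently, that $\mathrm{wt}_{\mathrm{E}}$ is well defined modulo $2(n+1)$ when $n$ is odd). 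You instead prove coset invariance of $B(x,x)\bmod 2\Z$ abstractly, using $B(x,\lambda)\in\Z$ for $\lambda\in\Lambda^{\oplus m}$ and the evenness of the $A_n$ lattice, and then pass to the canonical representative $x_i=a_if_1^*$ with $b(f_1^*,f_1^*)=\tfrac{n}{n+1}$. This buys you an explicit justification of precisely the step the paper leaves implicit, and your coprimality bookkeeping ($\gcd(n,n+1)=1$ for even $n$, $\gcd(n,2(n+1))=1$ for odd $n$) makes the final case analysis airtight, whereas the paper's ``Thus'' compresses it; the paper's version, in exchange, needs no input beyond the bilinear expansion itself. One small convention point: the paper's $\mathrm{wt}_{\mathrm{E}}$ runs over representatives up to $n+1$, so your $W=\sum_i a_i^2$ with $a_i\in\{0,\dots,n\}$ may differ from it by multiples of $(n+1)^2$; since $(n+1)^2\in 2(n+1)\Z$ for odd $n$, membership in $2(n+1)\Z$ is unaffected and your argument stands.
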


\begin{proof}
Let $x=(x_i)_{1\leq i \leq m} ,~y=(y_i)_{1\leq i \leq m} \in (\Lambda^*)^{\oplus m}$, 
	$x_i=\sum_{j=1}^{n}x_{ij}f_j^*,~y_i=\sum^{n}_{j=1} y_{ij}f_j^*$ with $x_{ij},~y_{ij}\in \Z$ for $i=1,\dots ,m$, $j=1,\dots ,n$.
Then 
\begin{eqnarray*}
	b(x_i, y_i)
	&=& b\left(  \left(\sum_{j=1}^{n}x_{ij}f_j^*\right), \left(\sum_{j=1}^{n}y_{ij}f_j^*\right) \right) \\
	&=& b\left(  \left(\sum_{j=1}^{n}x_{ij}f_j^*\right), \left( \sum_{j=1}^{n} y_{ij}\left( f_{j}-\frac{1}{n+1}\sum_{l=1}^{n}f_{l} \right) \right) \right) \\
	&=&\sum_{j=1}^{n}x_{ij}\left( y_{ij}-\frac{1}{n+1}\sum_{l=1}^{n}y_{il} \right)\\
	&=& -\frac{1}{n+1}\left( \sum_{j=1}^{n}x_{ij}\right) \left(\sum_{l=1}^{n}y_{il} \right) +\sum_{j=1}^{n}x_{ij} y_{ij} .	
\end{eqnarray*}
Hence, 
\begin{eqnarray*}
B(x,y) \in \Z 
& \Leftrightarrow & 
\frac{1}{n+1}\sum_{i=1}^{m} \left(\sum_{j=1}^{n}x_{ij}\right) \left(\sum_{l=1}^{n}y_{il}\right) \in \Z \\
& \Leftrightarrow &
\rho^{\oplus m}(x)\cdot \rho^{\oplus m}(y)=0.
\end{eqnarray*}
Moreover,
\begin{eqnarray*}
	b(x_i, x_i)
	&=& -\frac{1}{n+1}\left( \sum_{j=1}^{n}x_{ij}\right) \left(\sum_{l=1}^{n}x_{il} \right) +\sum_{j=1}^{n}x_{ij} x_{ij} \\
	&=& -\frac{1}{n+1}\left( \sum_{j=1}^{n}x_{ij}\right)^2  +\sum_{j=1}^{n}x_{ij}^2
	+\left( \sum_{j=1}^{n}x_{ij}\right)^2 -\left( \sum_{j=1}^{n}x_{ij}\right)^2 \\
	&=&\frac{n}{n+1}\left( \sum_{j=1}^{n}x_{ij}\right)^2 
		-\sum_{1\leq j<l \leq n}2x_{ij} x_{il}.
\end{eqnarray*}
Thus,
	\begin{eqnarray*}
B(x,x) \in 2\Z 
		&\Leftrightarrow &
		\frac{n}{n+1}\sum_{i=1}^{m} \left(\sum_{j=1}^{n}x_{ij}\right)^2  \in 2\Z \\
		&\Leftrightarrow & 
		\left\{
\begin{array}{ll}
\mathrm{wt}_{\mathrm{E}}(\rho^{\oplus m} (x))\in 2(n+1)\Z & \mathrm{if}~ n\notin 2\Z, \\
~&~ \\
\rho^{\oplus m} (x)\cdot \rho^{\oplus m} (x)=0 & \mathrm{if}~ n\in 2\Z.
\end{array}
\right.
	\end{eqnarray*}
\end{proof}

\begin{lemma}\label{Anlem2}
	Let $C$ be a $\Z/(n+1)\Z$-code of length $m$. Then
\[
\Gamma_C^*=\Gamma_{C^{\perp}}.
\]	
\end{lemma}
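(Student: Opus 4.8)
The plan is to combine the order-reversing behaviour of lattice duality with the integrality criterion of \cref{Anlem1}. First I would record the chain of inclusions $\Lambda^{\oplus m}\subseteq \Gamma_C\subseteq (\Lambda^*)^{\oplus m}$: the right-hand inclusion is immediate from the definition $\Gamma_C=(\rho^{\oplus m})^{-1}(C)\subseteq(\Lambda^*)^{\oplus m}$, while the left-hand one holds because $\ker\rho^{\oplus m}=\Lambda^{\oplus m}$, so that $\Lambda^{\oplus m}=(\rho^{\oplus m})^{-1}(0)\subseteq(\rho^{\oplus m})^{-1}(C)$. All three are full-rank lattices (of rank $mn$) in the ambient space $(\Lambda\otimes_{\Z}\R)^{\oplus m}$ equipped with $B$, so all the dual lattices below are formed in the same space.

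Next I would dualise, using that duality reverses inclusions together with the standard identities $(\Lambda^{\oplus m})^*=(\Lambda^*)^{\oplus m}$ and $\bigl((\Lambda^*)^{\oplus m}\bigr)^*=\Lambda^{\oplus m}$ (the latter since $(\Lambda^*)^*=\Lambda$). This yields $\Lambda^{\oplus m}\subseteq \Gamma_C^*\subseteq (\Lambda^*)^{\oplus m}$. The crucial output here, and the one place that deserves a little care, is the inclusion $\Gamma_C^*\subseteq(\Lambda^*)^{\oplus m}$: it guarantees that every vector of the dual lattice actually lies in the domain on which \cref{Anlem1} is stated, which is what makes the integrality criterion applicable in the final step.

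With $\Gamma_C^*\subseteq(\Lambda^*)^{\oplus m}$ secured, I would characterise membership directly. For $x\in(\Lambda^*)^{\oplus m}$ one has $x\in\Gamma_C^*$ exactly when $B(x,y)\in\Z$ for every $y\in\Gamma_C$; by \cref{Anlem1} this is equivalent to $\rho^{\oplus m}(x)\cdot\rho^{\oplus m}(y)=0$ for every $y\in\Gamma_C$. Since $\rho^{\oplus m}$ is surjective (one can realise any target coordinate $a_i$ via the preimage $a_i f_1^*$), we have $\rho^{\oplus m}(\Gamma_C)=C$, so the condition becomes $\rho^{\oplus m}(x)\cdot c=0$ for all $c\in C$, i.e. $\rho^{\oplus m}(x)\in C^{\perp}$. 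Hence, for $x\in(\Lambda^*)^{\oplus m}$, membership $x\in\Gamma_C^*$ is equivalent to $\rho^{\oplus m}(x)\in C^{\perp}$, which is precisely $x\in\Gamma_{C^{\perp}}=(\rho^{\oplus m})^{-1}(C^{\perp})$. As both $\Gamma_C^*$ and $\Gamma_{C^{\perp}}$ are contained in $(\Lambda^*)^{\oplus m}$, this equivalence gives $\Gamma_C^*=\Gamma_{C^{\perp}}$, completing the argument. I expect the main (indeed the only genuine) obstacle to be the duality sandwich that places $\Gamma_C^*$ inside $(\Lambda^*)^{\oplus m}$; once that is in place, the remainder is a routine translation through \cref{Anlem1} and the surjectivity of $\rho^{\oplus m}$.
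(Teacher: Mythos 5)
Your proof is correct and follows essentially the same route as the paper: both directions (or, in your phrasing, the pointwise equivalence) reduce to the integrality criterion of Lemma \ref{Anlem1} applied to elements of $(\Lambda^*)^{\oplus m}$. In fact you are slightly more careful than the paper, which silently assumes $\Gamma_C^*\subseteq(\Lambda^*)^{\oplus m}$ and $\rho^{\oplus m}(\Gamma_C)=C$; your duality sandwich $\Lambda^{\oplus m}\subseteq\Gamma_C\subseteq(\Lambda^*)^{\oplus m}$ together with the surjectivity of $\rho^{\oplus m}$ supplies exactly these implicit steps.
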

\begin{proof}
	Let $x\in \Gamma_{C^{\perp}}$, $y\in \Gamma_{C}$. Then $\rho^{\oplus m} (x)\in C^{\perp}$, $\rho^{\oplus m}(y)\in C$ and we have $\rho^{\oplus m}(x)\cdot \rho^{\oplus m}(y)=0$.  
	By Lemma \ref{Anlem1}, we have $B(x,y)\in \Z$. Thus, $\Gamma_{C^{\perp}}\subset \Gamma_{C}^{*}$.
	Conversely, let $x\in \Gamma_{C}^*$, $y\in \Gamma_C$. 
	Then $B(x,y)\in \Z$.
	By Lemma \ref{Anlem1}, we have $\rho^{\oplus m}(x)\cdot \rho^{\oplus m}(y)=0$.
	Hence, $\Gamma_{C}^* \subset \Gamma_{C^{\perp}}$.
\end{proof}

We have the following theorem.
\begin{theorem}\label{Anthe}
	Let $C$ be a $\Z/(n+1)\Z$-code of length $m$.
\begin{itemize}
\item
The case $n\notin 2\Z$
	\begin{enumerate}
	\item
	$\Gamma_{C}$ is integral if and only if $C\subset C^{\perp}$.
	\item
	$\Gamma_{C}$ is unimodular if and only if $C$ is Euclidean self-dual. 
	\item
	$\Gamma_{C}$ is even if and only if $\mathrm{wt}_{\mathrm{E}}(x)\in 2(n+1)\Z$ for every $x\in C$.
	\item
	$\Gamma_{C}$ is even unimodular if and only if $C$ is Type II..
	\end{enumerate}
\item
The case $n\in 2\Z$
	\begin{enumerate}
	\item
	$\Gamma_{C}$ is integral if and only if $C\subset C^{\perp}$.
	\item
	$\Gamma_{C}$ is unimodular if and only if $C$ is Euclidean self-dual. 
	\item
	$\Gamma_{C}$ is even if and only if $C\subset C^{\perp}$.
	\item
	$\Gamma_{C}$ is even unimodular if and only if $C$ is Euclidean self-dual.
	\end{enumerate}
\end{itemize}	
\end{theorem}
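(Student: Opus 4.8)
The plan is to reduce the entire theorem to the two equivalences already established in \cref{Anlem1} together with the duality statement of \cref{Anlem2}. The guiding observation is that the four properties in question — integrality, unimodularity, evenness, and even-unimodularity — are each expressible in terms of the bilinear form $B$ on $\Gamma_C$, and \cref{Anlem1} translates each such condition on $B$ into a condition on the images $\rho^{\oplus m}(x)$ living in $(\Z/(n+1)\Z)^{\oplus m}$. Since $\rho^{\oplus m}$ maps $\Gamma_C$ onto $C$ (this is immediate from $\Gamma_C = (\rho^{\oplus m})^{-1}(C)$ and the surjectivity of $\rho^{\oplus m}$, whose kernel is $\Lambda^{\oplus m}$), running over all $x \in \Gamma_C$ is the same as running over all codewords of $C$. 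Thus each lattice-theoretic property becomes a coding-theoretic property of $C$, and the proof is essentially a dictionary translation.

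\textbf{Integrality.} First I would prove (1) in both cases. By the remark in the excerpt, $\Gamma_C$ is integral if and only if $\Gamma_C \subset \Gamma_C^*$. By \cref{Anlem2} we have $\Gamma_C^* = \Gamma_{C^\perp}$, and since $\Gamma_{(\cdot)}$ is inclusion-preserving and $\rho^{\oplus m}$ surjects onto the relevant codes, $\Gamma_C \subset \Gamma_{C^\perp}$ holds if and only if $C \subset C^\perp$. This single argument disposes of (1) for both parities of $n$ at once, and it does not use \cref{Anlem1} at all.

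\textbf{Unimodularity.} For (2), $\Gamma_C$ is unimodular means $\Gamma_C = \Gamma_C^* = \Gamma_{C^\perp}$ by \cref{Anlem2}; applying $\rho^{\oplus m}$ and using that it is surjective with the same kernel on both sides gives $C = C^\perp$, i.e. $C$ is Euclidean self-dual, and conversely. This again works uniformly in $n$.

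\textbf{Evenness and the parity split.} The only place where the two cases diverge is evenness, statement (3), and here I would invoke the second equivalence of \cref{Anlem1}. By definition $\Gamma_C$ is even if and only if $B(x,x) \in 2\Z$ for every $x \in \Gamma_C$, equivalently for every codeword via $\rho^{\oplus m}$. When $n \notin 2\Z$, \cref{Anlem1} says this is $\mathrm{wt}_{\mathrm{E}}(\rho^{\oplus m}(x)) \in 2(n+1)\Z$, giving the stated weight condition directly. When $n \in 2\Z$, \cref{Anlem1} instead yields $\rho^{\oplus m}(x) \cdot \rho^{\oplus m}(x) = 0$ for all $x$, which says precisely that $C$ is self-orthogonal, $C \subset C^\perp$; here one should note that over $\Z/(n+1)\Z$ with $n$ even the condition $x \cdot x = 0$ for all $x \in C$ is equivalent to $C \subset C^\perp$ because the quadratic form $x \mapsto x\cdot x$ polarizes to the bilinear form $2(x\cdot y)$ and $n+1$ is odd, so $2$ is a unit and $x\cdot y = 0$ follows from the vanishing of the self-pairings on a generating set — this is the one spot requiring a line of care rather than pure bookkeeping. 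Finally (4), even-unimodularity, is just the conjunction of (2) and (3): in the odd case "Euclidean self-dual and $\mathrm{wt}_{\mathrm E} \in 2(n+1)\Z$" is by definition Type II, while in the even case the evenness condition $C \subset C^\perp$ is already implied by Euclidean self-duality $C = C^\perp$, so it collapses to Euclidean self-duality alone. \textbf{The main obstacle} I anticipate is not any of these translations individually but verifying cleanly that $x\cdot x = 0$ for all codewords is equivalent to $C \subset C^\perp$ in the even-$n$ case; once the unit-$2$ polarization argument is in hand, the remainder is a direct unwinding of the definitions.
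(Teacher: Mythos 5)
Your proposal is correct and follows essentially the same route as the paper, which states \cref{Anthe} as an immediate consequence of \cref{Anlem1} (translating $B(x,y)\in\Z$ and $B(x,x)\in 2\Z$ into conditions on codewords) and \cref{Anlem2} ($\Gamma_C^*=\Gamma_{C^\perp}$), using that $\rho^{\oplus m}$ is surjective with kernel $\Lambda^{\oplus m}$ so that preimages preserve and reflect inclusions. Your one point of added care --- that for even $n$ the condition $x\cdot x=0$ for all $x\in C$ is equivalent to $C\subset C^\perp$ because $2$ is a unit in $\Z/(n+1)\Z$ and the self-pairing polarizes --- is exactly the step the paper leaves implicit, and your argument for it is sound (note only that one should polarize over all of $C$, which is closed under addition, rather than merely a generating set).
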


Since the rank of an even unimodular lattice is divisible by $8$ (see e.g. \cite[Cor. 18 Ch. 7]{conway1998sphere}), we have the following result.
\begin{corollary}\label{Ancor}
	Let $C$ be a $\Z/(n+1)\Z$-code of length $m$.
	\begin{enumerate}
\item
Suppose $n\notin 2\Z$, If $C$ is Type II, then $mn\in 8\Z$.
\item
Suppose $n\in 2\Z$, If $C$ is self-dual, then $mn\in 8\Z$.
\end{enumerate}	

\end{corollary}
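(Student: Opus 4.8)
The plan is to read both statements straight off \cref{Anthe}, combined with the cited fact that the rank of an even unimodular lattice is divisible by $8$ (see \cite[Cor.\ 18 Ch.\ 7]{conway1998sphere}). The key structural input is already recorded above: by construction $\Gamma_C$ is a lattice of rank $mn$. So the whole argument amounts to transporting the rank-$8$ divisibility constraint across the correspondence $C \mapsto \Gamma_C$ in the two relevant cases.

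First I would treat the odd case. Suppose $n \notin 2\Z$ and that $C$ is Type II. By item (4) of the branch ``$n \notin 2\Z$'' of \cref{Anthe}, $\Gamma_C$ is even unimodular. Since $\Gamma_C$ has rank $mn$ and every even unimodular lattice has rank in $8\Z$, we conclude $mn \in 8\Z$.

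Next I would treat the even case. Suppose $n \in 2\Z$ and that $C$ is Euclidean self-dual. By item (4) of the branch ``$n \in 2\Z$'' of \cref{Anthe}, $\Gamma_C$ is again even unimodular, still of rank $mn$, so the same divisibility theorem yields $mn \in 8\Z$.

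The only point deserving attention is matching each hypothesis to the correct branch of \cref{Anthe}: for odd $n$ the ``even unimodular'' characterization is Type II, whereas for even $n$ it is plain Euclidean self-duality (in that regime self-duality already forces evenness, so no separate weight condition enters). Beyond this bookkeeping there is no genuine obstacle; the corollary is an immediate consequence of the preceding theorem and the classical rank-divisibility result.
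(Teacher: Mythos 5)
Your proposal is correct and is exactly the paper's argument: the corollary is stated immediately after \cref{Anthe} with the remark that the rank of an even unimodular lattice lies in $8\Z$, and both branches follow by applying item (4) of the corresponding case of \cref{Anthe} to the rank-$mn$ lattice $\Gamma_C$. Your added observation that for even $n$ self-duality already forces evenness matches the paper's formulation as well.
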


\begin{example}
In table \ref{A1} to \ref{A24}, we list some even unimodular lattices of rank 24 from codes. They can be labelled by their sublattice (see \cite{CONWAY198283}).
\begin{table}[h]
\begin{center}
    \caption{$A_1$ and $\Z/2\Z$-codes}\label{A1}
    \vspace{-10pt}
\begin{tabular}{|c|c|c|c|c|c|c|c|c|c| } \hline
   Code $C$ (see \cite{PLESS1975313})
   &  $A_{24}$ & $B_{24}$ & $C_{24}$ & $D_{24}$ & $E_{24}$ & $F_{24}$ & $G_{24}$ & $3E_8$ & $E_8\oplus E_{16}$ 
   \\ \hline
   Sublattice of $\Gamma_C$ 
   & $2D_{12}$ & $D_{10}\perp 2E_7$ & $3D_8$ & $4D_6$ & $D_{24}$ & $6D_4$ & $24A_1$ & $3E_8$ & $E_8\perp D_{16}$  
   \\ \hline
 \end{tabular}
\vspace{1pt}
\end{center}
  \begin{minipage}[t]{.45\textwidth}
    \begin{center}
       \caption{$A_2$ and $\Z/3\Z$-codes}
\vspace{-20pt}
\begin{tabular}{|c|c|c|c|} \hline
   Code $C$ (see \cite{47f3261c-228b-3df7-a34f-9836a7a9b34a})
   &  $\mathscr{G}_{12}$ & $4\mathscr{C}_3(12)$ & $3\mathscr{E}_4$ 
   \\ \hline
   Sublattice of $\Gamma_C$ 
   & $12A_2$ & $4E_6$ & $3E_8$ 
   \\ \hline
 \end{tabular}
\vspace{5pt}
  \caption{$A_3$ and $\Z/4\Z$-codes}
  \vspace{-10pt}
\begin{tabular}{|c|c|c|c|c|} \hline
   Code $C$ (see \cite{CONWAY199330})
   &  $\mathscr{O}_{8}$ & $\mathscr{Q}_8$ & $\mathscr{K}_8$ & $\mathscr{K}_8'$ 
   \\ \hline
   Sublattice of $\Gamma_C$ 
   & $8A_3$ & $4D_6$ & $D_{24}$ & $2D_{12}$
   \\ \hline
 \end{tabular}
\vspace{5pt}
  \caption{$A_4$ and $\Z/5\Z$-codes}
  \vspace{-10pt}
\begin{tabular}{|c|c|c|} \hline
   Code $C$ (see \cite{LEON1982178})
   &  $C^3_2$ & $F_6$ 
   \\ \hline
   Sublattice of $\Gamma_C$ 
   & $3E_8$ & $6A_4$ 
   \\ \hline
 \end{tabular}
\vspace{10pt}
  \caption{$A_6$ and $\Z/7\Z$-codes}
  \vspace{-10pt}
\begin{tabular}{|c|c|} \hline
   Code $C$ (see \cite{1057345})
   &  $C_4$ 
   \\ \hline
   Sublattice of $\Gamma_C$ 
   & $4A_6$ 
   \\ \hline
 \end{tabular}
    \end{center}
   \end{minipage}
  \hfill
  \begin{minipage}[t]{.45\textwidth}
    \begin{center}
   \caption{$A_{8}$ and $\Z/9\Z$-codes}
   \vspace{-10pt}
\begin{tabular}{|c|c|c|} \hline
   Code $C$ (see \cite{BALMACEDA20082984})
   &  $C_{9,3,1}$ & $C_{9,3,2}$ 
   \\ \hline
   Sublattice of $\Gamma_C$ 
   & $3E_{8}$ &$3A_8$ 
   \\ \hline
 \end{tabular}
\vspace{10pt}
  \caption{$A_{12}$ and $\Z/13\Z$-codes}
  \vspace{-10pt}
\begin{tabular}{|c|c|} \hline
   Code $C$ (see \cite{BETSUMIYA200337})
   &  $C_{13,2}$  
   \\ \hline
   Sublattice of $\Gamma_C$ 
   & $2A_{12}$ 
   \\ \hline
 \end{tabular}
\vspace{10pt}
  \caption{$A_{24}$ and $\Z/25\Z$-codes}\label{A24}
  \vspace{-10pt}
\begin{tabular}{|c|c|} \hline
   Code $C$ 
   &  $\langle 5 \rangle $  
   \\ \hline
   Sublattice of $\Gamma_C$ 
   & $A_{24}$ 
   \\ \hline
 \end{tabular}
  \begin{flushleft}
  where $\langle 5\rangle =\{5x\in \Z/25\Z ~|~x\in \Z/25\Z \}$.
\end{flushleft}    \end{center}
   \end{minipage}
\end{table}

\end{example}

\subsection{Lattices of type $D_n$ and $\Z/4\Z$-codes}
Let $n\geq 4$ be an odd integer and $(\Lambda,b)$ be a lattice of type $D_n$. Then $\Lambda$ has a basis $(e_1, \dots ,e_n)$ such that 
\[
	b( e_{i}, e_{j} )
	= \begin{cases}
	2 & \text{if $\lvert j - i \rvert = 0$, i.e., $j = i$}, \\
	-1 & \text{if ($\lvert j - i \rvert = 1$ and $\{ i, j \} \neq \{ n-1, n \}$) or $\{ i, j \} = \{ n-2, n \}$}, \\
	0 & \text{otherwise}.
	\end{cases}
\]
We define
\[
f_i=e_i+e_{n-1}+e_{n}+\sum_{l=i+1}^{n-2}2e_{l} \hspace{10pt} (1\leq i \leq n-3),
\]
\[
f_{n-2}=e_{n-2}+e_{n-1}+e_{n},~f_{n-1}=e_n,~f_n=e_n+\sum_{l=1}^{n-2}e_{l},
\]
and
\[
f_i^*=\frac{1}{4}\sum_{l=0}^{n-1}(-1)^l (n-2l)f_{i+l} \hspace{10pt} (1\leq i \leq n),
\]
where $f_{i+l}=f_{i+l-n}$ for every $l+i>n$. 
We have 
\[
b(f_i,f_j^*)=\delta_{ij} \hspace{10pt} (\mathrm{Kronecker~delta}).
\]
Hence,
$(f_1, \dots, f_n)$ is a basis of $\Lambda$, and $(f_1^*, \dots, f_n^*)$ is a basis of $\Lambda^*$. 

Let $m$ be a positive integer.
We consider the mapping
\[
\begin{array}{rccc}
\rho^{\oplus m} \colon & (\Lambda^*)^{\oplus m}            &\longrightarrow  & (\Z /4\Z)^{\oplus m}        \\
            & \rotatebox{90}{$\in$}&                 & \rotatebox{90}{$\in$} \\
            &\left(\displaystyle \sum_{j=1}^{n} x_{ij} f^*_j\right)_{1\leq i \leq m}  & \longmapsto &\left(\left( \displaystyle \sum_{j=1}^{n} x_{ij} \right) \mathrm{mod}~4 \right)_{1\leq i \leq m ~.} 
\end{array}
\]
This is an additive homomorphism. Additionally, the kernel of this homomorphism is $\Lambda^{\oplus m}$.

Let $C$ be a $\Z/4\Z$-code of length $m$. Then define 
\[
\Gamma_C:= (\rho^{\oplus m})^{-1}(C) \subset (\Lambda^*)^{\oplus m}.
\]

Let $x,y \in (\Lambda^*)^{\oplus m}, ~x=(x_1,\dots ,x_m), ~y=(y_1,\dots ,y_m)$ with $x_i,y_i \in \Lambda^*$ for $i=1,\dots ,m$. We define a symmetric bilinear form on $(\Lambda^*)^{\oplus m}$ by
\[
B(x,y)=\sum_{i=1}^{m}b(x_i,y_i).
\]
Then the pair $(\Gamma_C, B)$ is a lattice of rank $mn$.

We give some properties of the bilinear form $B$ and the lattice $\Gamma_C$.
\begin{lemma}\label{Dnlem1}
Let $x,y \in (\Lambda^*)^{\oplus m}$. Then 
\[
B(x,y)\in \Z \Leftrightarrow  \rho^{\oplus m}(x)\cdot \rho^{\oplus m}(y)=0,\]
\[B(x,x) \in 2\Z \Leftrightarrow  \mathrm{wt}_{\mathrm{E}}(\rho^{\oplus m} (x) )\in 8\Z. 
\]
\end{lemma}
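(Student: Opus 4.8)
The plan is to mirror the structure of the proof of Lemma~\ref{Anlem1}: compute $b(x_i,y_i)$ explicitly in the dual basis $(f_1^*,\dots,f_n^*)$, read off the obstruction to integrality and evenness as arithmetic conditions modulo $4$, and translate those back into the inner product and Euclidean weight of $\rho^{\oplus m}(x)$. Write $x=(x_i)_{1\le i\le m}$, $y=(y_i)_{1\le i\le m}$ with $x_i=\sum_{j=1}^n x_{ij}f_j^*$ and $y_i=\sum_{j=1}^n y_{ij}f_j^*$, where $x_{ij},y_{ij}\in\Z$. First I would use the defining relation $f_i^*=\tfrac14\sum_{l=0}^{n-1}(-1)^l(n-2l)f_{i+l}$ (indices cyclic) together with $b(f_i,f_j^*)=\delta_{ij}$ to produce a closed form for the Gram matrix $G_{jk}:=b(f_j^*,f_k^*)$ of the dual basis. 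Because $D_n$ for odd $n$ has $\Lambda^*/\Lambda\cong\Z/4\Z$, I expect $G$ to decompose as an integer symmetric matrix plus a rank-one correction of the form $\tfrac14 v_j v_k$ for some integer vector $v$, exactly as the $A_n$ case produced the $-\tfrac{1}{n+1}(\sum x_{ij})(\sum y_{il})$ term. The key computational claim is that $v_j\equiv 1\pmod 2$ in a way making $\sum_j x_{ij}$, reduced mod $4$, the correct coordinate; this is what makes $\rho^{\oplus m}$ land in $\Z/4\Z$ and forces the factor $\tfrac14$ to govern both statements.

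\textbf{Integrality.} Granting the decomposition $b(x_i,y_i)=(\text{integer})+\tfrac14\bigl(\sum_j \epsilon_j x_{ij}\bigr)\bigl(\sum_k \epsilon_k y_{ik}\bigr)$ with $\epsilon_j=\pm1$ signs, summing over $i$ gives
\[
B(x,y)\in\Z \iff \tfrac14\sum_{i=1}^m\Bigl(\sum_j \epsilon_j x_{ij}\Bigr)\Bigl(\sum_k \epsilon_k y_{ik}\Bigr)\in\Z.
\]
I would then check that $\rho^{\oplus m}(x)$ records precisely the residue $\bigl(\sum_j x_{ij}\bigr)\bmod 4$, and that the signs $\epsilon_j$ are harmless modulo $4$ after accounting for $(-1)^l(n-2l)$; this identifies the right-hand side with $\tfrac14\,\rho^{\oplus m}(x)\cdot\rho^{\oplus m}(y)$ taken in $\Z$, so the condition becomes $\rho^{\oplus m}(x)\cdot\rho^{\oplus m}(y)\equiv 0\pmod 4$, i.e.\ the inner product vanishes in $\Z/4\Z$.

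\textbf{Evenness.} Setting $y=x$ and isolating the diagonal as in Lemma~\ref{Anlem1}, I would rewrite $b(x_i,x_i)$ so that the fractional part is controlled by $\tfrac14\bigl(\sum_j\epsilon_j x_{ij}\bigr)^2$; the integer part is automatically even because $\Lambda^*$ is an even-coset refinement of the even lattice $\Lambda$. The condition $B(x,x)\in 2\Z$ then reduces to $\tfrac14\sum_i\bigl(\sum_j\epsilon_j x_{ij}\bigr)^2\in 2\Z$, i.e.\ $\sum_i t_i^2\in 8\Z$ where $t_i\equiv\sum_j x_{ij}\pmod 4$. Since $\mathrm{wt}_{\mathrm E}$ over $\Z/4\Z$ assigns $c^2$ to a coordinate $c$ (with representatives squared), $\sum_i t_i^2$ is exactly $\mathrm{wt}_{\mathrm E}(\rho^{\oplus m}(x))$ modulo the relevant multiple of $8$, giving $B(x,x)\in 2\Z\iff\mathrm{wt}_{\mathrm E}(\rho^{\oplus m}(x))\in 8\Z$.

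\textbf{The main obstacle} will be establishing the exact Gram matrix $G_{jk}=b(f_j^*,f_k^*)$ of the stated dual basis and verifying that its off-integer part is a clean rank-one term $\tfrac14 v_jv_k$ with $\sum_j x_{ij}$ emerging as the correct mod-$4$ invariant. The cyclic wrap-around convention $f_{i+l}=f_{i+l-n}$ and the alternating coefficients $(-1)^l(n-2l)$ make this bookkeeping delicate, and one must use the oddness of $n$ to ensure the coefficients behave correctly modulo $4$ (this is exactly where the $D_n$-odd hypothesis, giving the cyclic group $\Z/4\Z$ rather than $(\Z/2\Z)^2$, enters). Once that identity is in hand, both equivalences follow by the same routine summation and reduction used for $A_n$, so I would present the Gram-matrix computation in full and then abbreviate the two reductions.
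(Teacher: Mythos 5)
Your plan is correct in substance and follows the same backbone as the paper's proof: expand $x_i,y_i$ in the dual basis, isolate the fractional rank-one piece, and reduce modulo $4$ and $8$ using the oddness of $n$. Two points of comparison are worth recording. First, your predicted shape of the Gram defect is slightly off: from $b(f_j^*,f_k^*)=\tfrac14(-1)^l(n-2l)$ with $l=(j-k)\bmod n$, every entry has fractional part \emph{uniformly} equal to $\tfrac n4 \bmod \Z$ (for $l$ odd one uses $-\tfrac n4+\tfrac12\equiv\tfrac n4 \pmod{\Z}$, which is exactly where $n$ odd enters), so the decomposition is $b(x_i,y_i)=(\text{even-diagonal integer part})+\tfrac n4\bigl(\sum_j x_{ij}\bigr)\bigl(\sum_k y_{ik}\bigr)$ with $v=(1,\dots,1)$ and a single global unit $n$, not per-index signs $\epsilon_j=\pm1$. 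This matters: genuinely varying signs would \emph{not} be ``harmless modulo $4$'' (e.g.\ $\epsilon=(1,-1)$ sends $x=(1,1)$ to $\sum\epsilon_jx_{ij}=0$ while $\rho$ records $2$), so the uniformity must be, and is, verified; your own hedge that the bookkeeping is the main obstacle is accurate, and it resolves favorably. Second, your evenness step genuinely diverges from the paper: the paper proves the integer remainder is even by explicit parity analysis of the cross-term coefficients (the sums over $t-s$ even with coefficient $s-t$, and $t-s$ odd with coefficient $(t-s)-n$, both even for odd $n$), whereas you invoke that $b(v,v)\bmod 2\Z$ depends only on $v+\Lambda$ because $\Lambda$ is even and pairs integrally with $\Lambda^*$. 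Your route is cleaner and valid, but to make it rigorous you must add two small facts the paper gets for free from its computation: all $f_j^*$ lie in a single class modulo $\Lambda$ (immediate from $\rho(f_j^*)=1$ and $\ker\rho^{\oplus 1}=\Lambda$), and $b(f_1^*,f_1^*)=\tfrac n4$ exactly, whence $b(x_i,x_i)\equiv t_i^2\,\tfrac n4 \pmod{2\Z}$ with $t_i=\sum_j x_{ij}$; the final reductions ($4\mid n\sum_i t_is_i\iff 4\mid\sum_i t_is_i$, and $t^2\bmod 8$ depending only on $t\bmod 4$) then go through exactly as you state.
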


\begin{proof}
	Let $x=(x_i)_{1\leq i \leq m} ,~y=(y_i)_{1\leq i \leq m} \in (\Lambda^*)^{\oplus m}$, 
	$x_i=\sum_{j=1}^{n}x_{ij}f_j^*,~y_i=\sum^{n}_{j=1} y_{ij}f_j^*$ with $x_{ij},~y_{ij}\in \Z$ for $i=1,\dots ,m$, $j=1,\dots ,n$.
	
Since
\[
\sum^{n}_{j=1}y_{ij}f^*_j 
=\sum_{j=1}^{n} y_{ij}\left( \frac{1}{4}\sum_{l=0}^{n-1}(-1)^l (n-2l)f_{j+l} \right) 
=\sum^{n}_{s=1} \frac{1}{4}\left( \sum^{n}_{t=1}(-1)^{n-t}(n-2(n-t))y_{i(s+t)} \right)f_s  
\]where $y_{i(s+t)}=y_{i(s+t-n)}$ for every $s+t>n$,	 
we have
\begin{eqnarray*}
	b(x_i, y_i)
	&=& b\left(  \left(\sum_{j=1}^{n}x_{ij}f_j^*\right), \left(\sum_{j=1}^{n}y_{ij}f_j^*\right) \right) \\
	&=& b\left(  \left(\sum_{j=1}^{n}x_{ij}f_j^*\right), \left( \sum^{n}_{j=1} \frac{1}{4}\left( \sum^{n}_{l=1}(-1)^{n-l}(n-2(n-l))y_{i(j+l)} \right)f_j \right) \right) \\
	&=&\sum_{j=1}^{n}x_{ij}\frac{1}{4}\left(\sum_{l=1}^{n}(-1)^{n-l}(n-2(n-l))y_{i(j+l)}\right)\\
	&=& \frac{n}{4}\left( \sum_{j=1}^{n}x_{ij}\right) \left(\sum_{j=1}^{n}y_{ij} \right)
		-\frac{n}{4}\left( \sum_{j=1}^{n}x_{ij}\right) \left(\sum_{j=1}^{n}y_{ij} \right)
		+\sum_{j=1}^{n}x_{ij}\frac{1}{4}\left(\sum_{l=1}^{n}(-1)^{n-l}(2l-n)y_{i(j+l)}\right)\\
	&=&\frac{n}{4}\left( \sum_{j=1}^{n}x_{ij}\right) \left(\sum_{j=1}^{n}y_{ij} \right)
		+\sum_{j=1}^n x_{ij}\frac{1}{4}\left(\sum_{l=1}^{n}\left(-ny_{i(j+l)}+(-1)^{n-l}(2l-n)y_{i(j+l)}\right)\right)\\
	&=&\frac{n}{4}\left( \sum_{j=1}^{n}x_{ij}\right) \left(\sum_{j=1}^{n}y_{ij} \right)
		+\sum_{j=1}^n x_{ij}\frac{1}{4}\left(\sum_{l=1}^{(n+1)/2}(4l-2(n+1))y_{i(j+2l-1)}-\sum_{l=1}^{(n-1)/2}4ly_{i(j+2l)}\right)_{.}
\end{eqnarray*}
Hence, 
\begin{eqnarray*}
B(x,y) \in \Z 
& \Leftrightarrow & 
\frac{n}{4}\sum_{i=1}^{m} \left(\sum_{j=1}^{n}x_{ij}\right) \left(\sum_{l=1}^{n}y_{il}\right) \in \Z \\
& \Leftrightarrow &
\rho^{\oplus m}(x)\cdot \rho^{\oplus m}(y)=0.
\end{eqnarray*}
Moreover, we have 
\begin{eqnarray*}
	&~&\sum_{j=1}^n x_{ij}\frac{1}{4}\left(\sum_{l=1}^{(n+1)/2}(4l-2(n+1))x_{i(j+2l-1)}-\sum_{l=1}^{(n-1)/2}4lx_{i(j+2l)}\right)\\
	&=&\sum_{j=1}^{n}x_{ij}\frac{1}{4}\left(4\left(\frac{n+1}{2}\right)-2(n+1)\right)x_{ij}\\
	&~& +\sum_{\substack{1\leq s<t\leq n \\ t-s\in 2\Z}}\frac{1}{4}\left( \left( 4\frac{n+1-(t-s)}{2}-2(n+1)\right)-4\frac{t-s}{2} \right)x_{is}x_{it}\\
	&~& +\sum_{\substack{1\leq s<t\leq n \\ t-s\notin 2\Z}} \frac{1}{4}\left(\left(4\frac{(t-s)+1}{2}-2(n+1)\right) -4\frac{n-(t-s)}{2}  \right) x_{is}x_{it}\\
	&=&\sum_{\substack{1\leq s<t\leq n \\ t-s\in 2\Z}}(s-t)x_{is}x_{it}
	+\sum_{\substack{1\leq s<t\leq n \\ t-s\notin 2\Z}}((t-s)-n)x_{is}x_{it} \in 2\Z.
\end{eqnarray*}
Thus,
\begin{eqnarray*}
B(x,x) \in 2\Z 
& \Leftrightarrow & 
\frac{n}{4}\sum_{i=1}^{m} \left(\sum_{j=1}^{n}x_{ij}\right) \left(\sum_{l=1}^{n}x_{il}\right) \in 2\Z \\
& \Leftrightarrow &
\mathrm{wt}_{\mathrm{E}}(\rho^{\oplus m} (x))\in 8\Z.
\end{eqnarray*}
\end{proof}

\begin{lemma}\label{Dnlem2}
	Let $C$ be a $\Z/4\Z$-code of length $m$. Then
\[
\Gamma_C^*=\Gamma_{C^{\perp}}.
\]	
\end{lemma}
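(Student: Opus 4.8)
The plan is to imitate the proof of Lemma \ref{Anlem2} essentially verbatim, replacing the appeal to Lemma \ref{Anlem1} by the corresponding statement in Lemma \ref{Dnlem1}. The only ingredient needed is the first biconditional of Lemma \ref{Dnlem1}, namely
\[
B(x,y)\in\Z \Longleftrightarrow \rho^{\oplus m}(x)\cdot \rho^{\oplus m}(y)=0
\qquad (x,y\in(\Lambda^*)^{\oplus m}),
\]
together with the surjectivity of $\rho^{\oplus m}$ onto $(\Z/4\Z)^{\oplus m}$ and the kernel description $\ker\rho^{\oplus m}=\Lambda^{\oplus m}$ recorded just before the statement.

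First I would observe that $\Gamma_C^*$ lies inside $(\Lambda^*)^{\oplus m}$, so that the equivalence of Lemma \ref{Dnlem1} may legitimately be applied to any $x\in\Gamma_C^*$. Indeed, from $\Lambda^{\oplus m}\subseteq \Gamma_C\subseteq(\Lambda^*)^{\oplus m}$, together with $(\Lambda^{\oplus m})^*=(\Lambda^*)^{\oplus m}$ and $((\Lambda^*)^{\oplus m})^*=\Lambda^{\oplus m}$, taking duals reverses the inclusions to give $\Lambda^{\oplus m}\subseteq\Gamma_C^*\subseteq(\Lambda^*)^{\oplus m}$.

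For the inclusion $\Gamma_{C^{\perp}}\subseteq\Gamma_C^*$, I would take $x\in\Gamma_{C^{\perp}}$ and $y\in\Gamma_C$; then $\rho^{\oplus m}(x)\in C^{\perp}$ and $\rho^{\oplus m}(y)\in C$, so $\rho^{\oplus m}(x)\cdot\rho^{\oplus m}(y)=0$, and Lemma \ref{Dnlem1} yields $B(x,y)\in\Z$. Since $y$ is arbitrary, $x\in\Gamma_C^*$. For the reverse inclusion $\Gamma_C^*\subseteq\Gamma_{C^{\perp}}$, I would take $x\in\Gamma_C^*$ and an arbitrary codeword $c\in C$; using surjectivity of $\rho^{\oplus m}$ I choose $y\in\Gamma_C$ with $\rho^{\oplus m}(y)=c$, and since $B(x,y)\in\Z$, Lemma \ref{Dnlem1} gives $\rho^{\oplus m}(x)\cdot c=0$. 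As $c$ ranges over $C$, this shows $\rho^{\oplus m}(x)\in C^{\perp}$, i.e. $x\in\Gamma_{C^{\perp}}$.

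Both directions are purely formal consequences of the equivalence in Lemma \ref{Dnlem1}, so there is no real obstacle here: all of the genuine computational content—establishing that $B(x,y)$ is an integer exactly when the mod-$4$ images are orthogonal—has already been discharged in Lemma \ref{Dnlem1}. The only point requiring a separate line of justification is the preliminary containment $\Gamma_C^*\subseteq(\Lambda^*)^{\oplus m}$, which guarantees that the hypothesis $x\in(\Lambda^*)^{\oplus m}$ of Lemma \ref{Dnlem1} is satisfied when we apply it to elements of $\Gamma_C^*$.
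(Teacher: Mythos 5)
Your proposal is correct and matches the paper's own treatment: the paper proves this lemma by remarking ``it can be proven in the same way as Lemma~\ref{Anlem2}'', and that proof is exactly the two-inclusion argument you give, with the first equivalence of Lemma~\ref{Dnlem1} playing the role of Lemma~\ref{Anlem1}. Your preliminary verification that $\Lambda^{\oplus m}\subseteq\Gamma_C^*\subseteq(\Lambda^*)^{\oplus m}$ is a detail the paper leaves implicit, and including it is a (correct) strengthening rather than a deviation.
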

\begin{proof}
	It can be proven in the same way as Lemma \ref{Anlem2}.
\end{proof}
Therefore, we have the following theorem.
\begin{theorem}\label{Dnthe}
	Let $C$ be a $\Z/4\Z$-code of length $m$.
\begin{enumerate}
\item
$\Gamma_{C}$ is integral if and only if $C\subset C^{\perp}$.
\item
$\Gamma_{C}$ is unimodular if and only if $C$ is Euclidean self-dual. 
\item
$\Gamma_{C}$ is even if and only if $\mathrm{wt}_{\mathrm{E}}(x)\in 8\Z$ for every $x\in C$.
\item
$\Gamma_{C}$ is even unimodular if and only if $C$ is Type II.
\end{enumerate}	
\end{theorem}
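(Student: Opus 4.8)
The plan is to deduce all four equivalences formally from Lemmas \ref{Dnlem1} and \ref{Dnlem2}, exactly as in the odd-$n$ case of Theorem \ref{Anthe}; no fresh computation with the explicit bases $(f_i)$, $(f_i^*)$ is needed, since that bookkeeping has already been absorbed into Lemma \ref{Dnlem1}. The one structural fact I would record at the outset is that $\rho^{\oplus m}$ is surjective with kernel $\Lambda^{\oplus m}$. Consequently $\rho^{\oplus m}$ restricts to a surjection of $\Gamma_C$ onto $C$, so that ranging $x,y$ over $\Gamma_C$ is the same as ranging $\rho^{\oplus m}(x), \rho^{\oplus m}(y)$ over all of $C$; and, more importantly, $\rho^{\oplus m}$ induces an inclusion-preserving bijection between $\Z/4\Z$-codes of length $m$ and the lattices lying between $\Lambda^{\oplus m}$ and $(\Lambda^*)^{\oplus m}$, under which $\Gamma_C = \Gamma_{C'}$ holds precisely when $C = C'$.

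For (1), I would rewrite integrality as ``$B(x,y)\in\Z$ for all $x,y\in\Gamma_C$'', apply the first equivalence of Lemma \ref{Dnlem1}, and use $\rho^{\oplus m}(\Gamma_C)=C$ to translate this into ``$c\cdot d=0$ for all $c,d\in C$'', i.e.\ $C\subseteq C^{\perp}$. For (3), the identical argument with the second equivalence of Lemma \ref{Dnlem1} turns ``$B(x,x)\in 2\Z$ for all $x\in\Gamma_C$'' into ``$\mathrm{wt}_{\mathrm{E}}(c)\in 8\Z$ for all $c\in C$''. For (2), rather than reprove integrality I would feed Lemma \ref{Dnlem2} in directly: $\Gamma_C$ is unimodular iff $\Gamma_C=\Gamma_C^{*}=\Gamma_{C^{\perp}}$, and by the bijection above this is equivalent to $C=C^{\perp}$, that is, Euclidean self-duality.

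Finally, (4) is just the conjunction of (2) and (3): $\Gamma_C$ is even unimodular iff $C=C^{\perp}$ and $\mathrm{wt}_{\mathrm{E}}(c)\in 8\Z$ for every $c\in C$, which is exactly the definition of a Type II code over $\Z/4\Z$ once one notes $2\cdot 4=8$. I do not anticipate a genuine obstacle, since the theorem is a formal corollary of the two lemmas; the only points deserving a sentence of care are the code-to-lattice bijection invoked in (2), and the consistency remark that over $\Z/4\Z$ the weight condition $\mathrm{wt}_{\mathrm{E}}(c)\in 8\Z$ already forces $c\cdot c=0$ (because $\mathrm{wt}_{\mathrm{E}}(c)\equiv c\cdot c \pmod 4$), so that the two defining requirements of Type II fit together coherently. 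The substance of the result lives entirely inside Lemma \ref{Dnlem1}, whose proof carries out the $D_n$ dual-basis computation.
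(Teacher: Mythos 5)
Your proposal is correct and takes essentially the same route as the paper: the paper gives no written proof of Theorem \ref{Dnthe} at all, presenting it as an immediate formal consequence of Lemmas \ref{Dnlem1} and \ref{Dnlem2}, which is precisely the deduction you spell out (including the injectivity of $C \mapsto \Gamma_C$, via surjectivity of $\rho^{\oplus m}$, that part (2) silently requires). Your closing consistency remark is also the right place to be careful --- for part (3) read as a genuine ``if and only if'' one needs that $\mathrm{wt}_{\mathrm{E}}(c)\in 8\Z$ for all $c\in C$ already yields integrality of $\Gamma_C$ (equivalently, by polarization, $B(x+y,x+y)-B(x,x)-B(y,y)=2B(x,y)\in 2\Z$ forces $B(x,y)\in\Z$), a detail the paper likewise leaves implicit.
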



\subsection{Lattices of type $D_n$ and $(\F_2+u\F_2)$-codes}\label{Dnsub2}
Let $n\geq 4$ be an even integer and $(\Lambda,b)$ be a lattice of type $D_n$. Then $\Lambda$ has a basis $(e_1, \dots ,e_n)$ such that 
\[
	b( e_{i}, e_{j} )
	= \begin{cases}
	2 & \text{if $\lvert j - i \rvert = 0$, i.e., $j = i$}, \\
	-1 & \text{if ($\lvert j - i \rvert = 1$ and $\{ i, j \} \neq \{ n-1, n \}$) or $\{ i, j \} = \{ n-2, n \}$}, \\
	0 & \text{otherwise}.
	\end{cases}
\]
We define
\[
f_i=e_i ~(1\leq i \leq n-1),~f_n=e_n+\sum_{l=1}^{n-2}e_l,
\]
and
\[
f_i^*=\sum_{l=1}^{n}\frac{1}{4}(n-2|i-l|)f_l.
\]
We have 
\[
b(f_i,f_j^*)=\delta_{ij} \hspace{10pt} (\mathrm{Kronecker~delta}).
\]
Hence,
$(f_1, \dots, f_n)$ is a basis of $\Lambda$, and $(f_1^*, \dots, f_n^*)$ is a basis of $\Lambda^*$. 

Let $m$ be a positive integer.
We consider the mapping
\[
\begin{array}{rccc}
\rho^{\oplus m} \colon & (\Lambda^*)^{\oplus m}            &\longrightarrow  & (\F_2+u\F_2)^{\oplus m}        \\
            & \rotatebox{90}{$\in$}&                 & \rotatebox{90}{$\in$} \\
            &\left(\displaystyle \sum_{j=1}^{n} x_{ij} f^*_j\right)_{1\leq i \leq m}  & \longmapsto &\left(\left( \displaystyle \sum_{j=1}^{n/2} x_{i(2j-1)} ~ \mathrm{mod}~2\right) +\left( \displaystyle \sum_{j=1}^{n/2} x_{i(2j)} ~ \mathrm{mod}~2\right)(1+u) \right)_{1\leq i \leq m ~.} 
\end{array}
\]
This is an additive homomorphism. Additionally, the kernel of this homomorphism is $\Lambda^{\oplus m}$.

Let $C$ be a $(\F_2+u\F_2)$-code of length $m$. Then define 
\[
\Gamma_C:= (\rho^{\oplus m})^{-1}(C) \subset (\Lambda^*)^{\oplus m}.
\]

Let $x,y \in (\Lambda^*)^{\oplus m}, ~x=(x_1,\dots ,x_m), ~y=(y_1,\dots ,y_m)$ with $x_i,y_i \in \Lambda^*$ for $i=1,\dots ,m$. We define a symmetric bilinear form on $(\Lambda^*)^{\oplus m}$ by
\[
B(x,y)=\sum_{i=1}^{m}b(x_i,y_i).
\]
Then the pair $(\Gamma_C, B)$ is a lattice of rank $mn$.

We give some properties of the bilinear form $B$ and the lattice $\Gamma_C$.
\begin{lemma}\label{Dn2lem1}
	Let $x,y \in (\Lambda^*)^{\oplus m}$.
\begin{itemize}
	\item The case $n \notin 4\Z$ 
	\[
	B(x,y)\in \Z \Leftrightarrow \rho^{\oplus m}(x)\cdot \rho^{\oplus m}(y)\in \{ 0,1+u \}, 
	\]
\[
B(x,x) \in 2\Z 
		\Leftrightarrow 
\mathrm{wt_L} (\rho^{\oplus m} (x))\in 4\Z. 
\]
	\item The case $n \in 4\Z$
\[
	B(x,y)\in \Z \Leftrightarrow \rho^{\oplus m}(x)\cdot \rho^{\oplus m}(y)\in \{0,1\}, 
	\]
	\begin{eqnarray*}
B(x,x) \in 2\Z 
		\Leftrightarrow 
		\left\{
\begin{array}{ll}
\mathrm{wt_H}(\rho^{\oplus m} (x))\in 2\Z & \mathrm{if}~ n\notin 8\Z, \\
\mathrm{wt_B}(\rho^{\oplus m} (x))\in 2\Z & \mathrm{if}~ n\in 8\Z.
\end{array}
\right.
	\end{eqnarray*}
\end{itemize}
\end{lemma}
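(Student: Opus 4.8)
The plan is to imitate the proofs of \cref{Anlem1} and \cref{Dnlem1}: record the Gram matrix of the dual basis, expand $b(x_i,y_i)$ coordinate-wise, and isolate the fractional part modulo $\Z$ and modulo $2\Z$. Since $b(f_i^*,f_j)=\delta_{ij}$ and $f_j^*=\sum_{l=1}^{n}\frac14(n-2|j-l|)f_l$, bilinearity first yields $b(f_i^*,f_j^*)=\frac14(n-2|i-j|)$. Writing $x_i=\sum_j x_{ij}f_j^*$, $y_i=\sum_k y_{ik}f_k^*$ and $P_i=\sum_j x_{ij}$, $Q_i=\sum_k y_{ik}$, I then obtain
\[
b(x_i,y_i)=\frac{n}{4}P_iQ_i-\frac12\sum_{j,k}x_{ij}y_{ik}|j-k|.
\]
The second term always lies in $\frac12\Z$, while the first lies in $\frac12\Z\setminus\Z$ when $n\notin 4\Z$ and in $\Z$ when $n\in 4\Z$; thus the whole statement reduces to tracking two parities.

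The reductions rest on a few elementary facts. Abbreviating $a_i:=\sum_{j\text{ odd}}x_{ij}\bmod 2$ and $c_i:=\sum_{j\text{ even}}x_{ij}\bmod 2$ (and $a_i',c_i'$ for $y$), so that $\rho^{\oplus m}(x)_i=a_i+c_i(1+u)$, I would use $|j-k|\equiv j+k\pmod 2$, which gives $\sum_{j,k}x_{ij}y_{ik}|j-k|\equiv a_ic_i'+a_i'c_i\pmod 2$; for the diagonal, $M_i:=\sum_{j<k}x_{ij}x_{ik}(k-j)\equiv a_ic_i\pmod 2$ together with $(\sum_{j\text{ odd}}x_{ij})^2\equiv a_i\pmod 4$. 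A short computation in $\F_2+u\F_2$ using $(1+u)^2=1$ shows $\rho^{\oplus m}(x)\cdot\rho^{\oplus m}(y)=\alpha+\beta(1+u)$ with $\alpha\equiv\sum_i(a_ia_i'+c_ic_i')$ and $\beta\equiv\sum_i(a_ic_i'+a_i'c_i)\pmod 2$, so that membership in $\{0,1+u\}$ means $\alpha=0$ and membership in $\{0,1\}$ means $\beta=0$.

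The two cases then fall out. For $n\notin 4\Z$, writing $n=2r$ with $r$ odd, the two parities combine to $b(x_i,y_i)\equiv\frac12(a_ia_i'+c_ic_i')\pmod{\Z}$, giving $B(x,y)\in\Z\Leftrightarrow\alpha=0$; on the diagonal $b(x_i,x_i)\equiv\frac{r}{2}(a_i+c_i)\pmod 2$, and since the $i$-th coordinate Lee weight equals $a_i+c_i$, summation yields $B(x,x)\equiv\frac{r}{2}\,\mathrm{wt}_{\mathrm{L}}(\rho^{\oplus m}(x))\pmod 2$, whence $B(x,x)\in 2\Z\Leftrightarrow\mathrm{wt}_{\mathrm{L}}(\rho^{\oplus m}(x))\in 4\Z$ because $r$ is a unit modulo $4$. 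For $n\in 4\Z$ the term $\frac{n}{4}P_iQ_i$ is integral, so $B(x,y)\in\Z\Leftrightarrow\beta=0$; writing $w=n/4$, the diagonal reduces to $b(x_i,x_i)\equiv wP_i^2-a_ic_i\pmod 2$, which equals $a_i+c_i+a_ic_i$ (the indicator of $\rho^{\oplus m}(x)_i\neq 0$) when $w$ is odd, i.e. $n\notin 8\Z$, and equals $a_ic_i$ (the indicator of $\rho^{\oplus m}(x)_i=u$) when $w$ is even, i.e. $n\in 8\Z$; summing gives the Hamming and Bachoc conditions respectively, using $2N_1+N_2\equiv N_2\pmod 2$.

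The main obstacle is precisely this parity bookkeeping. One must check that $b(x_i,x_i)\bmod 2$ depends only on the residues $x_{ij}\bmod 2$—which is where the hypothesis that $n$ is even enters—and in fact only on the pair $(a_i,c_i)$, and then verify that the spurious dependence on $n$ either cancels (the odd factor $r$ being invertible modulo $4$ when $n\notin 4\Z$) or exactly toggles between the Hamming and the Bachoc weight through the parity of $w=n/4$ when $n\in 4\Z$.
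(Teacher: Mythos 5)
Your proposal is correct and takes essentially the same route as the paper: expand $b$ in the dual basis via $b(f_j^*,f_k^*)=\tfrac14(n-2|j-k|)$, isolate the term $\tfrac{n}{4}P_iQ_i$, and split into the cases $n\bmod 4$ and $n\bmod 8$, reducing everything to the odd/even coordinate sums that define $\rho$. Your identity $|j-k|\equiv j+k \pmod 2$ merely compresses the paper's explicit fourfold odd/even splitting of the double sum into the residues $(a_i,c_i)$, and all your parity claims check out ($\sum_{j,k}x_{ij}y_{ik}|j-k|\equiv a_ic_i'+a_i'c_i$, $M_i\equiv a_ic_i$, $A_i^2\equiv a_i \pmod 4$, and $(1+u)^2=1$ giving the $\{0,1+u\}$ versus $\{0,1\}$ criteria via $\alpha=0$ versus $\beta=0$).
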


\begin{proof}
	Let $x=(x_i)_{1\leq i \leq m} ,~y=(y_i)_{1\leq i \leq m} \in (\Lambda^*)^{\oplus m}$, 
	$x_i=\sum_{j=1}^{n}x_{ij}f_j^*,~y_i=\sum^{n}_{j=1} y_{ij}f_j^*$ with $x_{ij},~y_{ij}\in \Z$ for $i=1,\dots ,m$, $j=1,\dots ,n$.	 
Then
\begin{eqnarray*}
	b(x_i, y_i)
	&=& b\left(  \left(\sum_{j=1}^{n}x_{ij}f_j^*\right), \left(\sum_{j=1}^{n}y_{ij}f_j^*\right) \right) \\
	&=& b\left(  \left(\sum_{j=1}^{n}x_{ij}f_j^*\right), \left( \sum^{n}_{j=1} \frac{1}{4}y_{ij}\left( \sum^{n}_{l=1}(n-2|j-l|)f_l \right) \right) \right) \\
	&=&\sum_{j=1}^{n}\sum_{s=1}^{n}\frac{1}{4}(n-2|s-j|)x_{ij}y_{is}\\
	&=&\sum_{j=1}^{n/2}\sum_{s=1}^{n/2}\left(\frac{n}{4}-|s-j|\right)x_{i(2j-1)}y_{i(2s-1)}
		+\sum_{j=1}^{n/2}\sum_{s=1}^{n/2}\left(\frac{n}{4}-|s-j|\right)x_{i(2j)}y_{i(2s)}\\
	&~&+\sum_{j=1}^{n/2}\sum_{s=1}^{n/2}\left(\frac{n}{4}-\left|s-j+\frac{1}{2}\right|\right)x_{i(2j-1)}y_{i(2s)}
		+\sum_{j=1}^{n/2}\sum_{s=1}^{n/2}\left(\frac{n}{4}-\left|s-j-\frac{1}{2}\right|\right)x_{i(2j)}y_{i(2s-1)}\\
	&=&\sum_{j=1}^{n/2}\sum_{s=1}^{n/2}\left(\frac{n}{4}-|s-j|\right)x_{i(2j-1)}y_{i(2s-1)}
		+\sum_{j=1}^{n/2}\sum_{s=1}^{n/2}\left(\frac{n}{4}-|s-j|\right)x_{i(2j)}y_{i(2s)}\\
	&~&+\sum_{j=1}^{n/2}\sum_{s=1}^{n/2}\left(\frac{n}{4}-\frac{1}{2}-\left|s-j\right|\right)x_{i(2j-1)}y_{i(2s)}
		+\sum_{j=1}^{n/2}\sum_{s=1}^{n/2}\left(\frac{n}{4}-\frac{1}{2}-|s-j|\right)x_{i(2j)}y_{i(2s-1)}\\
	&~&+\sum_{j=1}^{(n/2)-1}(x_{i(2j+1)}y_{i(2j)}+x_{i(2j)}y_{i(2j+1)})	.
\end{eqnarray*}
Let $n\notin 4\Z$. Since $n\in2\Z$, we see that
\begin{eqnarray*}
&~&\sum_{j=1}^{n/2}\sum_{s=1}^{n/2}(-|s-j|)x_{i(2j-1)}y_{i(2s-1)}
		+\sum_{j=1}^{n/2}\sum_{s=1}^{n/2}(-|s-j|)x_{i(2j)}y_{i(2s)}\\
&+&\sum_{j=1}^{n/2}\sum_{s=1}^{n/2}\left(\frac{n}{4}-\frac{1}{2}-\left|s-j\right|\right)x_{i(2j-1)}y_{i(2s)}
		+\sum_{j=1}^{n/2}\sum_{s=1}^{n/2}\left(\frac{n}{4}-\frac{1}{2}-|s-j|\right)x_{i(2j)}y_{i(2s-1)}\\
	&+&\sum_{j=1}^{(n/2)-1}(x_{i(2j+1)}y_{i(2j)}+x_{i(2j)}y_{i(2j+1)})
	\in \Z,
\end{eqnarray*}
Thus,
\begin{eqnarray*}
b(x_i,y_i)\in \Z 
&\Leftrightarrow &
\frac{n}{4} \left( \left(\sum_{j=1}^{n/2}x_{i(2j-1)}\right) \left(\sum_{s=1}^{n/2}y_{i(2s-1)}\right)+\left(\sum_{j=1}^{n/2}x_{i(2j)}\right) \left(\sum_{s=1}^{n/2}y_{i(2s)}\right) \right) \in \Z \\ 
&\Leftrightarrow &
\rho^{\oplus 1}(x_i)\cdot \rho^{\oplus 1}(y_i)\in \{ 0,1+u\}.
\end{eqnarray*}
Hence,
\begin{eqnarray*}
B(x,y) \in \Z 
 \Leftrightarrow 
\rho^{\oplus m}(x)\cdot \rho^{\oplus m}(y)\in \{0,1+u\}.
\end{eqnarray*}
Let $n\in 4\Z$. We see that
\begin{eqnarray*}
	&~&\sum_{j=1}^{n/2}\sum_{s=1}^{n/2}\left(\frac{n}{4}-|s-j|\right)x_{i(2j-1)}y_{i(2s-1)}
		+\sum_{j=1}^{n/2}\sum_{s=1}^{n/2}\left(\frac{n}{4}-|s-j|\right)x_{i(2j)}y_{i(2s)}\\
	&+&\sum_{j=1}^{n/2}\sum_{s=1}^{n/2}\left(\frac{n}{4}-\left|s-j\right|\right)x_{i(2j-1)}y_{i(2s)}
		+\sum_{j=1}^{n/2}\sum_{s=1}^{n/2}\left(\frac{n}{4}-|s-j|\right)x_{i(2j)}y_{i(2s-1)}\\
	&+&\sum_{j=1}^{(n/2)-1}(x_{i(2j+1)}y_{i(2j)}+x_{i(2j)}y_{i(2j+1)})	
		 \in \Z.
\end{eqnarray*}
Thus,
\begin{eqnarray*}
b(x_i,y_i)\in \Z 
&\Leftrightarrow &
-\frac{1}{2} \left( \left(\sum_{j=1}^{n/2}x_{i(2j-1)}\right) \left(\sum_{s=1}^{n/2}y_{i(2s)}\right)+\left(\sum_{j=1}^{n/2}x_{i(2j)}\right) \left(\sum_{s=1}^{n/2}y_{i(2s-1)}\right) \right) \in \Z \\ 
&\Leftrightarrow &
\rho^{\oplus 1}(x_i)\cdot \rho^{\oplus 1}(y_i)\in \{ 0,1\}.
\end{eqnarray*}
Hence,
\begin{eqnarray*}
B(x,y) \in \Z 
 \Leftrightarrow 
\rho^{\oplus m}(x)\cdot \rho^{\oplus m}(y)\in \{0,1\}.
\end{eqnarray*}

Moreover, we have 
\begin{eqnarray*}
b(x_i,x_i) 
&=&\frac{n}{4}\sum_{j=1}^{n/2}\sum_{s=1}^{n/2}x_{i(2j-1)}x_{i(2s-1)}
		+\frac{n}{4}\sum_{j=1}^{n/2}\sum_{s=1}^{n/2}x_{i(2j)}x_{i(2s)}\\
	&~&-2\sum_{1\leq j<s\leq n/2}|s-j|(x_{i(2j-1)}x_{i(2s-1)}+
		x_{i(2j)}y_{i(2s)})\\
	&~&+2\sum_{j=1}^{n/2}\sum_{s=1}^{n/2}\left(\frac{n}{4}-\frac{1}{2}-\left|s-j\right|\right)x_{i(2j-1)}x_{i(2s)}\\
	&~&+2\sum_{j=1}^{(n/2)-1}x_{i(2j+1)}x_{i(2j)}.	
\end{eqnarray*}
Let $n\notin 4\Z$. From $n\in2\Z$, we see that
\begin{eqnarray*}
	b(x_i,x_i)\in 	
	\frac{n}{4} \left( \left(\sum_{j=1}^{n/2}x_{i(2j-1)}\right)^2 +\left(\sum_{j=1}^{n/2}x_{i(2j)}\right)^2 \right) + 2\Z. 
\end{eqnarray*}
Thus,
\begin{eqnarray*}
2b(x_i,x_i)\equiv
\left\{
\begin{array}{ll}
	0 \mod 4 & \mathrm{if}~\rho^{\oplus 1}(x_i)=0,\\
	1 \mod 4 & \mathrm{if}~\rho^{\oplus 1}(x_i)=1 ~\mathrm{or}~ 1+u ,\\
	2 \mod 4 & \mathrm{if}~\rho^{\oplus 1}(x_i)=u. 
\end{array}\right.	
\end{eqnarray*}
Hence,
\[
B(x,x)\in 2\Z \Leftrightarrow \mathrm{wt_L}(\rho^{\oplus m}(x))\in 4\Z.
\]

Let $n\in 4\Z$ and $n\notin 8\Z$. We see that 
\begin{eqnarray*}
	b(x_i,x_i)\in \frac{n}{4} \left( \left(\sum_{j=1}^{n/2}x_{i(2j-1)}\right)^2 +\left(\sum_{j=1}^{n/2}x_{i(2j)}\right)^2 \right)
	-\left(\sum_{j=1}^{n/2}x_{i(2j-1)}\right)\left(\sum_{j=1}^{n/2}x_{i(2j)}\right)+2\Z.
\end{eqnarray*}
Thus,
\begin{eqnarray*}
b(x_i,x_i)\equiv
\left\{
\begin{array}{ll}
	0 \mod 2 & \mathrm{if}~\rho^{\oplus 1}(x_i)=0,\\
	1 \mod 2 & \text{otherwise}. 
\end{array}\right.	
\end{eqnarray*}
Hence,
\begin{eqnarray*}
	B(x,x)\in 2\Z \Leftrightarrow \mathrm{wt_H}(\rho^{\oplus m}(x))\in 2\Z.
\end{eqnarray*}

Let $n\in 8\Z$. We see that 
\begin{eqnarray*}
	b(x_i,x_i)\in 
	\left(\sum_{j=1}^{n/2}x_{i(2j-1)}\right)\left(\sum_{j=1}^{n/2}x_{i(2j)}\right)+2\Z.
\end{eqnarray*}
Thus,
\begin{eqnarray*}
b(x_i,x_i)\equiv
\left\{
\begin{array}{ll}
	1 \mod 2 & \mathrm{if}~\rho^{\oplus 1}(x_i)=u,\\
	0 \mod 2 & \text{otherwise}. 
\end{array}\right.	
\end{eqnarray*}
Hence,
\begin{eqnarray*}
	B(x,x)\in 2\Z \Leftrightarrow \mathrm{wt_B}(\rho^{\oplus m}(x))\in 2\Z.
\end{eqnarray*}
\end{proof}

\begin{lemma}\label{Dn2lem2}
Let $C$ be a $(\F_2+u\F_2)$-code of length $m$. We define
\[
C'=\{x\in (\F_2+u\F_2)^{\oplus m}~|~x\cdot y \in \{0,1\}~\mathrm{for~every}~y\in C\},
\]	
and
\[
C''=\{x\in (\F_2+u\F_2)^{\oplus m}~|~x\cdot y \in \{0,1+u\}~\mathrm{for~every}~y\in C\}.
\]
Then $C'=C^{\perp}$, $C''=C^{\perp}$.
\end{lemma}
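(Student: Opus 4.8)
The plan is to prove both equalities $C'=C^{\perp}$ and $C''=C^{\perp}$ by the same device, crucially exploiting that $C$ is an $R$-module (and not merely an additive subgroup), so that $uy\in C$ whenever $y\in C$. The inclusions $C^{\perp}\subseteq C'$ and $C^{\perp}\subseteq C''$ are immediate from the definitions, since $0\in\{0,1\}$ and $0\in\{0,1+u\}$; the content is in the reverse inclusions.

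First I would record the one algebraic fact that drives everything. Because the inner product $x\cdot y=\sum_{i}x_iy_i$ is $R$-bilinear over the commutative ring $R=\F_2+u\F_2$, we have $x\cdot(uy)=u\,(x\cdot y)$ for all $x,y\in R^{\oplus m}$. Multiplication by $u$ in $R$ acts by $0\mapsto 0$, $1\mapsto u$, $u\mapsto 0$, and $1+u\mapsto u+u^2=u$.

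Now take $x\in C'$ and any $y\in C$. Since $C$ is an $R$-module, $uy\in C$ as well, so the defining condition of $C'$ applies to both $y$ and $uy$: both $x\cdot y\in\{0,1\}$ and $x\cdot(uy)=u\,(x\cdot y)\in\{0,1\}$ must hold. If $x\cdot y=1$, then $x\cdot(uy)=u\notin\{0,1\}$, a contradiction; hence $x\cdot y=0$ for every $y\in C$, i.e.\ $x\in C^{\perp}$. This gives $C'\subseteq C^{\perp}$ and therefore $C'=C^{\perp}$. The argument for $C''$ is verbatim the same: if $x\in C''$ and $x\cdot y=1+u$ for some $y\in C$, then using $uy\in C$ we get $x\cdot(uy)=u(1+u)=u\notin\{0,1+u\}$, a contradiction, so again $x\cdot y=0$ for all $y\in C$ and $x\in C^{\perp}$.

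I do not anticipate a genuine obstacle here: the entire proof rests on the single observation that one may replace $y$ by $uy$, and on the elementary table for multiplication by $u$. The only point that requires care is to invoke the \emph{module} structure of $C$ rather than just closure under addition, since without $uy\in C$ the forcing step collapses; this is exactly where the proof differs from the classical binary case.
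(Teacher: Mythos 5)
Your proof is correct and follows essentially the same route as the paper: the paper also reduces to the reverse inclusion and derives a contradiction by replacing $y$ with a scalar multiple lying in $C$ (the paper multiplies by the unit $1+u$, getting $x\cdot((1+u)y)=1+u\notin\{0,1\}$, where you multiply by $u$), and it likewise hinges on the $R$-module structure of $C$. The only cosmetic difference is that you treat $C''$ explicitly while the paper proves only the $C'$ case and leaves the other as analogous.
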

\begin{proof}
	We only show $C'=C^{\perp}$. 
$C^{\perp}\subset C'$ can be easily checked.
Let $x\in C'$. Suppose that there exists $y\in C$ such that $x\cdot y=1$.
Then $x\cdot (1+u) y=1+u$.
Since $(1+u) y \in C$, we see that $x\cdot (1+u) y\in \{0,1\}$, which is a contradiction. Hence, we have $x\cdot y=0$ for every $y\in C$. Therefore, $C'=C^{\perp}$ holds.   
\end{proof}

From Lemma \ref{Dn2lem2},
we can show the following lemma in the same way as Lemma \ref{Anlem2}.
\begin{lemma}\label{Dn2lem3}
	Let $C$ be a $(\F_2+u\F_2)$-code of length $m$. Then
	\[
	\Gamma_C^*=\Gamma_{C^{\perp}}.
	\]
\end{lemma}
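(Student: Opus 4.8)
The plan is to mimic the proof of \cref{Anlem2} verbatim, with the only modification being that the role of the naive orthogonality relation $\rho^{\oplus m}(x)\cdot\rho^{\oplus m}(y)=0$ is now played by the membership condition supplied by \cref{Dn2lem1}, which \cref{Dn2lem2} has just identified with the genuine dual code. First I would fix $x\in\Gamma_{C^{\perp}}$ and $y\in\Gamma_C$, so that $\rho^{\oplus m}(x)\in C^{\perp}$ and $\rho^{\oplus m}(y)\in C$. By \cref{Dn2lem2}, $C^{\perp}$ equals both $C'$ and $C''$, so $\rho^{\oplus m}(x)\cdot\rho^{\oplus m}(y)$ lies in $\{0,1\}$ (equivalently in $\{0,1+u\}$). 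Applying the appropriate case of \cref{Dn2lem1} — the $n\notin 4\Z$ case uses the $\{0,1+u\}$ description and the $n\in 4\Z$ case uses the $\{0,1\}$ description — this membership is exactly equivalent to $B(x,y)\in\Z$. Hence $B(x,y)\in\Z$ for all $y\in\Gamma_C$, giving $\Gamma_{C^{\perp}}\subset\Gamma_C^{*}$.

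For the reverse inclusion I would take $x\in\Gamma_C^{*}$, so $B(x,y)\in\Z$ for every $y\in\Gamma_C$. Running the equivalence of \cref{Dn2lem1} backwards, $B(x,y)\in\Z$ forces $\rho^{\oplus m}(x)\cdot\rho^{\oplus m}(y)\in\{0,1\}$ (resp.\ $\{0,1+u\}$) for every $y\in\Gamma_C$. Since $\rho^{\oplus m}$ is surjective with kernel $\Lambda^{\oplus m}$, the image $\rho^{\oplus m}(y)$ ranges over all of $C$ as $y$ ranges over $\Gamma_C$, so $\rho^{\oplus m}(x)$ satisfies the defining condition of $C'$ (resp.\ $C''$). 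By \cref{Dn2lem2} this means $\rho^{\oplus m}(x)\in C^{\perp}$, i.e.\ $x\in\Gamma_{C^{\perp}}$. Combining the two inclusions yields $\Gamma_C^{*}=\Gamma_{C^{\perp}}$.

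The one genuinely new wrinkle compared to the $A_n$ situation is that the integrality criterion for $B$ is no longer a single clean equation $\rho^{\oplus m}(x)\cdot\rho^{\oplus m}(y)=0$ but a two-valued membership condition, and this condition takes a different shape ($\{0,1+u\}$ versus $\{0,1\}$) in the two congruence classes of $n$. The main obstacle, therefore, is purely bookkeeping: one must confirm that the set appearing in \cref{Dn2lem1} matches the set defining $C'$ or $C''$ in \cref{Dn2lem2} \emph{in each case}, so that \cref{Dn2lem2}'s identification of both $C'$ and $C''$ with the Euclidean dual $C^{\perp}$ can be invoked uniformly. Since \cref{Dn2lem2} establishes $C'=C''=C^{\perp}$ outright, this matching is automatic and no case distinction on $n$ actually survives into the statement $\Gamma_C^{*}=\Gamma_{C^{\perp}}$; this is exactly why the authors can assert the result ``in the same way as \cref{Anlem2}'' without reproducing the argument.
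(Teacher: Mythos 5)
Your proposal is correct and is exactly the argument the paper intends: its proof of \cref{Dn2lem3} consists only of the remark that, given \cref{Dn2lem2}, one argues ``in the same way as \cref{Anlem2},'' and you have faithfully expanded that pointer, including the key observation that \cref{Dn2lem2}'s identification $C'=C''=C^{\perp}$ absorbs the case distinction on $n \bmod 4$ coming from \cref{Dn2lem1}. Nothing in your write-up deviates from or adds to the paper's route, so there is nothing further to compare.
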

Therefore, we have the following theorem.
\begin{theorem}\label{Dn2the1}
Let $C$ be a $(\F_2+u\F_2)$-code.
	\begin{itemize}
		\item The case $n\in 2\Z$ and $n\notin 4\Z$
		\begin{enumerate}
			\item $\Gamma_C$ is integral if and only if $C\subset C^{\perp}$.
			\item $\Gamma_C$ is unimodular if and only if $C$ is Euclidean self-dual.
			\item $\Gamma_C$ is even if and only if $\mathrm{wt_L}(x)\in 4\Z$ for every $x\in C$.
			\item $\Gamma_C$ is even unimodular if and only if $C$ is Type II. 
		\end{enumerate}
		\item The case $n\in 4\Z$ and $n\notin 8\Z$
		\begin{enumerate}
			\item $\Gamma_C$ is integral if and only if $C\subset C^{\perp}$.
			\item $\Gamma_C$ is unimodular if and only if $C$ is Euclidean self-dual.
			\item $\Gamma_C$ is even if and only if $\mathrm{wt_H}(x)\in 2\Z$ for every $x\in C$.
			\item $\Gamma_C$ is even unimodular if and only if $C$ is Type IV. 
		\end{enumerate}
		\item The case $n\in 8\Z$
		\begin{enumerate}
			\item $\Gamma_C$ is integral if and only if $C\subset C^{\perp}$.
			\item $\Gamma_C$ is unimodular if and only if $C$ is Euclidean self-dual.
			\item $\Gamma_C$ is even if and only if $\mathrm{wt_B}(x)\in 2\Z$ for every $x\in C$.
			\item $\Gamma_C$ is even unimodular if and only if $C$ is Type IV. 
		\end{enumerate}
	\end{itemize}
\end{theorem}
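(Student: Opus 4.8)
The plan is to read off all four statements in each of the three cases from the dictionary provided by Lemmas~\ref{Dn2lem1}, \ref{Dn2lem2} and \ref{Dn2lem3}, in exactly the manner of Theorems~\ref{Anthe} and \ref{Dnthe}. The one structural fact used throughout is that $\rho^{\oplus m}$ is a surjection with kernel $\Lambda^{\oplus m}$; consequently $\Gamma_C=(\rho^{\oplus m})^{-1}(C)$ satisfies $\Gamma_{C_1}\subset \Gamma_{C_2}\Leftrightarrow C_1\subset C_2$ and $\Gamma_{C_1}=\Gamma_{C_2}\Leftrightarrow C_1=C_2$, and moreover as $x$ ranges over $\Gamma_C$ the image $\rho^{\oplus m}(x)$ ranges over all of $C$.

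For statement (1) I would use that a lattice is integral iff it is contained in its dual. By Lemma~\ref{Dn2lem3} we have $\Gamma_C^*=\Gamma_{C^\perp}$, so $\Gamma_C$ is integral iff $\Gamma_C\subset \Gamma_{C^\perp}$, which by the preimage remark above is equivalent to $C\subset C^\perp$; this argument is uniform in $n$. Statement (2) is then immediate: $\Gamma_C$ is unimodular iff $\Gamma_C=\Gamma_C^*=\Gamma_{C^\perp}$, i.e. iff $C=C^\perp$. For statement (3) I would invoke the quadratic part of Lemma~\ref{Dn2lem1}: $\Gamma_C$ is even iff $B(x,x)\in 2\Z$ for all $x\in\Gamma_C$, and the lemma rewrites this, according to the residue class of $n$, as $\mathrm{wt_L}(\rho^{\oplus m}(x))\in 4\Z$ (case $n\notin 4\Z$), $\mathrm{wt_H}(\rho^{\oplus m}(x))\in 2\Z$ (case $n\in 4\Z$, $n\notin 8\Z$), or $\mathrm{wt_B}(\rho^{\oplus m}(x))\in 2\Z$ (case $n\in 8\Z$); since $\rho^{\oplus m}(x)$ exhausts $C$, this is exactly the stated weight condition on every codeword.

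Statement (4) is the conjunction of (2) and (3). In the cases $n\notin 4\Z$ and $n\in 4\Z,\ n\notin 8\Z$ this reproduces verbatim the definitions of Type II and Type IV, respectively, so nothing further is needed. The delicate case is $n\in 8\Z$: here (2) and (3) yield ``$C$ Euclidean self-dual and $\mathrm{wt_B}(x)\in 2\Z$ for all $x\in C$,'' whereas Type IV is defined through $\mathrm{wt_H}$. The reconciliation, which is the only genuinely new computation, rests on the identity $\mathrm{wt_B}(x)-\mathrm{wt_H}(x)=N_1(x)$ together with the observation that self-orthogonality forces $N_1(x)$ to be even. Indeed, in $\F_2+u\F_2$ one has $x_i^2=1$ precisely when $x_i\in\{1,1+u\}$ and $x_i^2=0$ otherwise, so $x\cdot x$ equals the residue $N_1(x) \bmod 2$; hence $C\subset C^\perp$ gives $x\cdot x=0$ and thus $N_1(x)\equiv 0\pmod 2$, whence $\mathrm{wt_B}(x)\equiv \mathrm{wt_H}(x)\pmod 2$ for every $x\in C$. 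This identifies the two conditions and finishes the case $n\in 8\Z$. I expect this self-orthogonality argument for $n\in 8\Z$ to be the only real obstacle; the remaining work is a mechanical transcription through the three lemmas.
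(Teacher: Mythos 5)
Your proposal is correct and matches the paper's own derivation, which likewise reads off all twelve statements directly from Lemmas~\ref{Dn2lem1}, \ref{Dn2lem2} and \ref{Dn2lem3} together with the surjectivity of $\rho^{\oplus m}$ (the paper offers no further proof beyond ``Therefore, we have the following theorem''). The one point the paper does not spell out---that for a Euclidean self-dual code over $\F_2+u\F_2$ the conditions $\mathrm{wt_B}(x)\in 2\Z$ for all $x\in C$ and $\mathrm{wt_H}(x)\in 2\Z$ for all $x\in C$ coincide, which you correctly identify as the only delicate step in the case $n\in 8\Z$---appears there only as an unproved remark following the theorem, and your argument via $x\cdot x\equiv N_1(x)\pmod 2$ (so self-orthogonality forces $N_1(x)$ even, hence $\mathrm{wt_B}(x)\equiv\mathrm{wt_H}(x)\pmod 2$) is a correct proof of that remark.
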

\begin{remark}
	Let $C$ be a Euclidean self-dual code over $\F_2+u\F_2$. Then $\mathrm{wt_H}(x)\in 2\Z$ for every $x\in C$ if and only if $\mathrm{wt_B}(x)\in 2\Z$ for every $x\in C$.
\end{remark}

\begin{example}
In table \ref{uD4} to \ref{uD12}, we list some even unimodular lattices of rank 24 from codes.

\begin{table}[h]
\begin{center}
\caption{Lattice of type $D_4$}\label{uD4}
\vspace{-10pt}
\begin{tabular}{|c|c|c|c|c|} \hline
   Code $C$ (see \cite{Dougherty1999TypeIS}, \cite{article})
   &  $\mathcal{K}_6$ & $[6,2]\_d_4d_2a$ & $[6,3]\_3d_2a$ & $[6,3]\_3d_2d$ 
   \\ \hline
   Sublattice of $\Gamma_C$ 
   & $D_{24}$ & $D_{16}\perp E_8 $ & $3E_8$ & $3D_8$
   \\ \hline
 \end{tabular}
 \vspace{10pt}
\caption{Lattice of type $D_6$}
\vspace{-10pt}
\begin{tabular}{|c|c|c|} \hline
   Code $C$ (see \cite{article})
   &  $[4,1]\_ d_4(\mathcal{K}_4)$ & $[4,2]\_2d_2(\mathcal{D}_4)$ 
   \\ \hline
   Sublattice of $\Gamma_C$ 
   & $D_{24}$ & $2D_{12} $ 
   \\ \hline
 \end{tabular}
\vspace{10pt}
 \caption{Lattice of type $D_{12}$}\label{uD12}
\vspace{-10pt}
\begin{tabular}{|c|c|} \hline
   Code $C$ (see \cite{Dougherty1999TypeIS}, \cite{article})
   &  $\mathcal{K}_2$ 
   \\ \hline
   Sublattice of $\Gamma_C$ 
   & $D_{24}$  
   \\ \hline
 \end{tabular}

\end{center}
\end{table}

\end{example}
\subsection{Lattices of type $D_n$ and $\F_4$-codes}\label{Dnsub3}
Let $n\geq 4$ be an even integer and $(\Lambda,b)$ be a lattice of type $D_n$. Then $\Lambda$ has a basis $(e_1, \dots ,e_n)$ such that 
\[
	b( e_{i}, e_{j} )
	= \begin{cases}
	2 & \text{if $\lvert j - i \rvert = 0$, i.e., $j = i$}, \\
	-1 & \text{if ($\lvert j - i \rvert = 1$ and $\{ i, j \} \neq \{ n-1, n \}$) or $\{ i, j \} = \{ n-2, n \}$}, \\
	0 & \text{otherwise}.
	\end{cases}
\]
We define
\[
f_i=e_i ~(1\leq i \leq n-1),~f_n=e_n+\sum_{l=1}^{n-2}e_l,
\]
and
\[
f_i^*=\sum_{l=1}^{n}\frac{1}{4}(n-2|i-l|)f_l.
\]
We have 
\[
b(f_i,f_j^*)=\delta_{ij} \hspace{10pt} (\mathrm{Kronecker~delta}).
\]
Hence,
$(f_1, \dots, f_n)$ is a basis of $\Lambda$, and $(f_1^*, \dots, f_n^*)$ is a basis of $\Lambda^*$. 

Let $m$ be a positive integer.
We consider the mapping
\[
\begin{array}{rccc}
\rho^{\oplus m} \colon & (\Lambda^*)^{\oplus m}            &\longrightarrow  & \F_4^{\oplus m}        \\
            & \rotatebox{90}{$\in$}&                 & \rotatebox{90}{$\in$} \\
            &\left(\displaystyle \sum_{j=1}^{n} x_{ij} f^*_j\right)_{1\leq i \leq m}  & \longmapsto &\left(\left( \displaystyle \sum_{j=1}^{n/2} x_{i(2j-1)} ~ \mathrm{mod}~2\right)\omega +\left( \displaystyle \sum_{j=1}^{n/2} x_{i(2j)} ~ \mathrm{mod}~2\right)\omega^2 \right)_{1\leq i \leq m ~.} 
\end{array}
\]
This is an additive homomorphism. Additionally, the kernel of this homomorphism is $\Lambda^{\oplus m}$.

Let $C$ be a $\F_4$-code of length $m$. Then define 
\[
\Gamma_C:= (\rho^{\oplus m})^{-1}(C) \subset (\Lambda^*)^{\oplus m}.
\]

Let $x,y \in (\Lambda^*)^{\oplus m}, ~x=(x_1,\dots ,x_m), ~y=(y_1,\dots ,y_m)$ with $x_i,y_i \in \Lambda^*$ for $i=1,\dots ,m$. We define a symmetric bilinear form on $(\Lambda^*)^{\oplus m}$ by
\[
B(x,y)=\sum_{i=1}^{m}b(x_i,y_i).
\]
Then the pair $(\Gamma_C, B)$ is a lattice of rank $mn$.

We give some properties of the bilinear form $B$ and the lattice $\Gamma_C$.
\begin{lemma}\label{Dn3lem1}
	Let $x,y \in (\Lambda^*)^{\oplus m}$.
\begin{itemize}
	\item The case $n \notin 4\Z$ 
	\[
	B(x,y)\in \Z \Leftrightarrow \rho^{\oplus m}(x)\cdot \rho^{\oplus m}(y)\in \{ 0,1 \}, 
	\]
\[
B(x,x) \in 2\Z 
		\Leftrightarrow 
\mathrm{wt_L} (\rho^{\oplus m} (x))\in 4\Z. 
\]
	\item The case $n \in 4\Z$
\[
	B(x,y)\in \Z \Leftrightarrow \rho^{\oplus m}(x)\cdot \overline{\rho^{\oplus m}(y)}\in \{0,1\}, 
	\]
	\begin{eqnarray*}
B(x,x) \in 2\Z 
		\Leftrightarrow 
		\left\{
\begin{array}{ll}
\rho^{\oplus m} (x)\cdot \overline{\rho^{\oplus m} (x)}=0 & \mathrm{if}~ n\notin 8\Z, \\
\mathrm{wt_B} \left(\rho^{\oplus m} (x)\right)\in 2\Z & \mathrm{if}~ n\in 8\Z.
\end{array}
\right.
	\end{eqnarray*}
\end{itemize}
\end{lemma}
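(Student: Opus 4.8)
The plan is to inherit the entire Gram-matrix calculation from the proof of Lemma~\ref{Dn2lem1}, since the vectors $f_i$, $f_i^*$ and the form $b$ are defined here by exactly the same formulas as in subsection~\ref{Dnsub2}; only the target ring and the map $\rho^{\oplus m}$ differ. Writing $P(x_i)=\sum_{j=1}^{n/2}x_{i(2j-1)}$ and $Q(x_i)=\sum_{j=1}^{n/2}x_{i(2j)}$, that computation shows that $b(x_i,y_i)$ differs from an integer only by the term $\frac{n}{4}\bigl(P(x_i)P(y_i)+Q(x_i)Q(y_i)\bigr)$ when $n\notin 4\Z$, and by the term $-\frac12\bigl(P(x_i)Q(y_i)+Q(x_i)P(y_i)\bigr)$ when $n\in 4\Z$. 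Thus, setting $p=P(x_i)\bmod 2$, $q=Q(x_i)\bmod 2$ and likewise $r,s$ for $y_i$, integrality of $b(x_i,y_i)$ is controlled respectively by $pr+qs\equiv 0$ and by $ps+qr\equiv 0$ modulo $2$; the residues of $b(x_i,x_i)$ modulo $2$ (or of $2b(x_i,x_i)$ modulo $4$) are likewise those tabulated in Lemma~\ref{Dn2lem1} and depend only on the pair $(p,q)$.

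The genuinely new step is the translation into $\F_4$. By definition $\rho^{\oplus 1}(x_i)=p\omega+q\omega^2$, so the four parity classes correspond to $0,\omega,\omega^2,1$ via $(0,0)\mapsto 0$, $(1,0)\mapsto\omega$, $(0,1)\mapsto\omega^2$, $(1,1)\mapsto 1$. Expanding with $\omega^2=\omega+1$ and $\omega^3=1$ I would record the two identities $\rho^{\oplus 1}(x_i)\,\rho^{\oplus 1}(y_i)=(pr+ps+qr)+(pr+qs)\omega$ and $\rho^{\oplus 1}(x_i)\,\overline{\rho^{\oplus 1}(y_i)}=(ps+pr+qs)+(ps+qr)\omega$. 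Since an element of $\F_4$ lies in $\{0,1\}$ exactly when its $\omega$-coefficient vanishes, these identities turn the parity conditions above into $\rho^{\oplus 1}(x_i)\rho^{\oplus 1}(y_i)\in\{0,1\}$ for $n\notin 4\Z$ and $\rho^{\oplus 1}(x_i)\overline{\rho^{\oplus 1}(y_i)}\in\{0,1\}$ for $n\in 4\Z$. Summing over $i$ (noting the $\omega$-coefficient of a sum is the sum of the $\omega$-coefficients) gives the two stated equivalences for $B(x,y)\in\Z$.

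For the diagonal assertions I would feed the residues of $b(x_i,x_i)$ from Lemma~\ref{Dn2lem1} through the same dictionary. When $n\notin 4\Z$ one obtains $2b(x_i,x_i)\equiv 0,1,1,2\bmod 4$ according as $\rho^{\oplus 1}(x_i)=0,\omega,\omega^2,1$; since $\omega,\omega^2$ contribute to $N_1$ and $1$ to $N_2$, summation gives $2B(x,x)\equiv N_1+2N_2=\mathrm{wt_L}(\rho^{\oplus m}(x))\bmod 4$, hence $B(x,x)\in 2\Z\Leftrightarrow\mathrm{wt_L}(\rho^{\oplus m}(x))\in 4\Z$. When $n\in 4\Z$ but $n\notin 8\Z$, $b(x_i,x_i)$ is odd exactly when $\rho^{\oplus 1}(x_i)\neq 0$, so $B(x,x)\equiv\mathrm{wt_H}(\rho^{\oplus m}(x))\bmod 2$; combined with the identity $a\bar a=a^3$, which equals $0$ if $a=0$ and $1$ otherwise, this reads $B(x,x)\in 2\Z\Leftrightarrow\rho^{\oplus m}(x)\cdot\overline{\rho^{\oplus m}(x)}=0$. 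When $n\in 8\Z$, $b(x_i,x_i)$ is odd exactly when $\rho^{\oplus 1}(x_i)=1$, i.e.\ for the coordinates counted by $N_2$, so $B(x,x)\equiv N_2\equiv\mathrm{wt_B}(\rho^{\oplus m}(x))\bmod 2$, giving the final equivalence.

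The calculation itself is inherited verbatim, so I expect no new algebraic difficulty; the only real care needed is bookkeeping. Specifically, one must track that the passage from the bilinear form to an $\F_4$-product switches from the Euclidean pairing to the Hermitian (conjugated) pairing exactly as $n$ crosses a multiple of $4$, and that the correct invariant, $\mathrm{wt_L}$, the Hermitian form $\rho^{\oplus m}(x)\cdot\overline{\rho^{\oplus m}(x)}$, or $\mathrm{wt_B}$, is attached to each of the three residue regimes for $n$.
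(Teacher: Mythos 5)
Your proposal is correct and follows essentially the same route as the paper: the paper's proof of this lemma likewise inherits the expansion of $b(x_i,y_i)$ verbatim from the proof of Lemma~\ref{Dn2lem1} and then reads off the integrality and parity conditions through the map $\rho^{\oplus 1}(x_i)=P(x_i)\omega+Q(x_i)\omega^2$, exactly as you do. If anything, your explicit identities for $\rho^{\oplus 1}(x_i)\rho^{\oplus 1}(y_i)$ and $\rho^{\oplus 1}(x_i)\overline{\rho^{\oplus 1}(y_i)}$, together with the observation that $\{0,1\}\subset\F_4$ is the kernel of the $\omega$-coefficient and that this coefficient is additive, make the summation over $i$ (and the use of $a\bar a=a^3$ in the case $n\in 4\Z$, $n\notin 8\Z$) more explicit than the paper's ``Hence'' steps, but this is a matter of presentation rather than a different argument.
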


\begin{proof}
	Let $x=(x_i)_{1\leq i \leq m} ,~y=(y_i)_{1\leq i \leq m} \in (\Lambda^*)^{\oplus m}$, 
	$x_i=\sum_{j=1}^{n}x_{ij}f_j^*,~y_i=\sum^{n}_{j=1} y_{ij}f_j^*$ with $x_{ij},~y_{ij}\in \Z$ for $i=1,\dots ,m$, $j=1,\dots ,n$.	 
Similar to Lemma \ref{Dn2lem1}, we have
\begin{eqnarray*}
	b(x_i, y_i)
	&=&\sum_{j=1}^{n/2}\sum_{s=1}^{n/2}\left(\frac{n}{4}-|s-j|\right)x_{i(2j-1)}y_{i(2s-1)}
		+\sum_{j=1}^{n/2}\sum_{s=1}^{n/2}\left(\frac{n}{4}-|s-j|\right)x_{i(2j)}y_{i(2s)}\\
	&~&+\sum_{j=1}^{n/2}\sum_{s=1}^{n/2}\left(\frac{n}{4}-\frac{1}{2}-\left|s-j\right|\right)x_{i(2j-1)}y_{i(2s)}
		+\sum_{j=1}^{n/2}\sum_{s=1}^{n/2}\left(\frac{n}{4}-\frac{1}{2}-|s-j|\right)x_{i(2j)}y_{i(2s-1)}\\
	&~&+\sum_{j=1}^{(n/2)-1}(x_{i(2j+1)}y_{i(2j)}+x_{i(2j)}y_{i(2j+1)})	.
\end{eqnarray*}
Let $n\notin 4\Z$. Since $n\in2\Z$, we see that
\begin{eqnarray*}
b(x_i,y_i)\in \Z 
&\Leftrightarrow &
\frac{n}{4} \left( \left(\sum_{j=1}^{n/2}x_{i(2j-1)}\right) \left(\sum_{s=1}^{n/2}y_{i(2s-1)}\right)+\left(\sum_{j=1}^{n/2}x_{i(2j)}\right) \left(\sum_{s=1}^{n/2}y_{i(2s)}\right) \right) \in \Z \\ 
&\Leftrightarrow &
\rho^{\oplus 1}(x_i)\cdot \rho^{\oplus 1}(y_i)\in \{ 0,1\}.
\end{eqnarray*}
Hence,
\begin{eqnarray*}
B(x,y) \in \Z 
 \Leftrightarrow 
\rho^{\oplus m}(x)\cdot \rho^{\oplus m}(y)\in \{0,1\}.
\end{eqnarray*}
Let $n\in 4\Z$. We see that
\begin{eqnarray*}
b(x_i,y_i)\in \Z 
&\Leftrightarrow &
-\frac{1}{2} \left( \left(\sum_{j=1}^{n/2}x_{i(2j-1)}\right) \left(\sum_{s=1}^{n/2}y_{i(2s)}\right)+\left(\sum_{j=1}^{n/2}x_{i(2j)}\right) \left(\sum_{s=1}^{n/2}y_{i(2s-1)}\right) \right) \in \Z \\ 
&\Leftrightarrow &
\rho^{\oplus 1}(x_i)\cdot \overline{\rho^{\oplus 1}(y_i)}\in \{ 0,1\}.
\end{eqnarray*}
Hence,
\begin{eqnarray*}
B(x,y) \in \Z 
 \Leftrightarrow 
\rho^{\oplus m}(x)\cdot \overline{\rho^{\oplus m}(y)}\in \{0,1\}.
\end{eqnarray*}

Moreover, we have 
\begin{eqnarray*}
b(x_i,x_i) 
&=&\frac{n}{4}\sum_{j=1}^{n/2}\sum_{s=1}^{n/2}x_{i(2j-1)}x_{i(2s-1)}
		+\frac{n}{4}\sum_{j=1}^{n/2}\sum_{s=1}^{n/2}x_{i(2j)}x_{i(2s)}\\
	&~&-2\sum_{1\leq j<s\leq n/2}|s-j|(x_{i(2j-1)}x_{i(2s-1)}+
		x_{i(2j)}y_{i(2s)})\\
	&~&+2\sum_{j=1}^{n/2}\sum_{s=1}^{n/2}\left(\frac{n}{4}-\frac{1}{2}-\left|s-j\right|\right)x_{i(2j-1)}x_{i(2s)}\\
	&~&+2\sum_{j=1}^{(n/2)-1}x_{i(2j+1)}x_{i(2j)}.	
\end{eqnarray*}
Let $n\notin 4\Z$. From $n\in2\Z$, we see that
\begin{eqnarray*}
	b(x_i,x_i)\in 	
	\frac{n}{4} \left( \left(\sum_{j=1}^{n/2}x_{i(2j-1)}\right)^2 +\left(\sum_{j=1}^{n/2}x_{i(2j)}\right)^2 \right) + 2\Z. 
\end{eqnarray*}
Thus,
\begin{eqnarray*}
2b(x_i,x_i)\equiv
\left\{
\begin{array}{ll}
	0 \mod 4 & \mathrm{if}~\rho^{\oplus 1}(x_i)=0,\\
	1 \mod 4 & \mathrm{if}~\rho^{\oplus 1}(x_i)=\omega ~\mathrm{or}~ \overline{\omega} ,\\
	2 \mod 4 & \mathrm{if}~\rho^{\oplus 1}(x_i)=1. 
\end{array}\right.	
\end{eqnarray*}
Hence,
\[
B(x,x)\in 2\Z \Leftrightarrow \mathrm{wt_L}(\rho^{\oplus m}(x))\in 4\Z.
\]

Let $n\in 4\Z$ and $n\notin 8\Z$. We see that 
\begin{eqnarray*}
	b(x_i,x_i)\in \frac{n}{4} \left( \left(\sum_{j=1}^{n/2}x_{i(2j-1)}\right)^2 +\left(\sum_{j=1}^{n/2}x_{i(2j)}\right)^2 \right)
	-\left(\sum_{j=1}^{n/2}x_{i(2j-1)}\right)\left(\sum_{j=1}^{n/2}x_{i(2j)}\right)+2\Z.
\end{eqnarray*}
Thus,
\begin{eqnarray*}
	b(x,x)\in 2\Z \Leftrightarrow \rho^{\oplus 1}(x_i)\cdot \overline{\rho^{\oplus 1}(x_i)}=0.
\end{eqnarray*}
Hence,
\begin{eqnarray*}
	B(x,x)\in 2\Z \Leftrightarrow \rho^{\oplus m}(x)\cdot \overline{\rho^{\oplus m}(x)}=0.
\end{eqnarray*}

Let $n\in 8\Z$. We see that 
\begin{eqnarray*}
	b(x_i,x_i)\in 
	\left(\sum_{j=1}^{n/2}x_{i(2j-1)}\right)\left(\sum_{j=1}^{n/2}x_{i(2j)}\right)+2\Z.
\end{eqnarray*}
Thus,
\begin{eqnarray*}
	b(x_i,x_i)\in 2\Z \Leftrightarrow 
	\rho^{\oplus 1} (x_i)\in \{0,\omega,\overline{\omega}\} .
\end{eqnarray*}
Hence,
\begin{eqnarray*}
	B(x,x)\in 2\Z \Leftrightarrow
	\mathrm{wt_B}\left(\rho^{\oplus m} (x)\right)\in 2\Z .
\end{eqnarray*}
\end{proof}

\begin{lemma}\label{Dn3lem2}
Let $C$ be a $\F_4$-code of length $m$. We define
\[
C'=\{x\in \F_4^{\oplus m}~|~x\cdot y \in \{0,1\}~\mathrm{for~every}~y\in C\},
\]	
and
\[
C''=\{x\in \F_4^{\oplus m}~|~x\cdot \overline{y} \in \{0,1\}~\mathrm{for~every}~y\in C\}.
\]
Then $C'=C^{\perp}$, $C''=\overline{C}^{\perp}$.
\end{lemma}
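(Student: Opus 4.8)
The plan is to transcribe the proof of Lemma~\ref{Dn2lem2}, using the scalar $\omega$ in place of $1+u$. First I would prove $C'=C^{\perp}$. The inclusion $C^{\perp}\subset C'$ is immediate because $0\in\{0,1\}$. For the reverse inclusion I would take $x\in C'$ and argue by contradiction: suppose $x\cdot y=1$ for some $y\in C$. Since $C$ is an $\F_4$-submodule of $\F_4^{\oplus m}$, it is closed under scalar multiplication, so $\omega y\in C$; then $x\cdot(\omega y)=\omega(x\cdot y)=\omega\notin\{0,1\}$, contradicting $x\in C'$. Hence $x\cdot y=0$ for every $y\in C$, i.e.\ $x\in C^{\perp}$, and $C'=C^{\perp}$ follows.

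For the identity $C''=\overline{C}^{\perp}$ I would reduce to the first part by passing to the conjugate code $\overline{C}=\{\overline{y}\mid y\in C\}$. The key preliminary observation is that $\overline{C}$ is again an $\F_4$-code: conjugation $\sigma\colon\F_4\to\F_4,\ x\mapsto x^{2}$ is a field automorphism with $\sigma^{2}=\mathrm{id}$, so for $c\in\F_4$ and $y\in C$ we have $c\,\overline{y}=\sigma(\sigma(c)\,y)\in\sigma(C)=\overline{C}$, and $\overline{C}$ is clearly closed under addition. By definition $x\in C''$ precisely when $x\cdot\overline{y}\in\{0,1\}$ for every $y\in C$, which is the same as $x\cdot z\in\{0,1\}$ for every $z\in\overline{C}$; that is, $C''=(\overline{C})'$. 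Applying the already-established first identity to $\overline{C}$ in place of $C$ gives $(\overline{C})'=(\overline{C})^{\perp}=\overline{C}^{\perp}$, whence $C''=\overline{C}^{\perp}$.

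I expect no genuine obstacle here, since the argument is a direct analogue of the $\F_2+u\F_2$ case; the only point that deserves explicit care is the verification that $\overline{C}$ is an $\F_4$-submodule, because the reduction of the second identity to the first depends on it. The essential algebraic input throughout is that multiplying an admissible value $1$ by the nontrivial scalar $\omega$ produces $\omega\notin\{0,1\}$, which is exactly what drives the contradiction and makes the set $\{0,1\}$ rigid enough to force orthogonality.
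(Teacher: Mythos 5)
Your proof is correct and follows essentially the same route as the paper, which simply defers to the unit-multiplication argument of the earlier lemma (multiplying a codeword with $x\cdot y=1$ by a unit to land outside $\{0,1\}$); your use of the scalar $\omega$ is the exact $\F_4$ analogue of the paper's use of $1+u$. Your explicit verification that $\overline{C}$ is an $\F_4$-submodule, via $c\,\overline{y}=\overline{\overline{c}\,y}$, is a worthwhile detail the paper leaves implicit, and the reduction $C''=(\overline{C})'$ is the intended argument.
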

\begin{proof}
It can be proven in the same way as Lemma \ref{Dnlem2}.
\end{proof}

From Lemma \ref{Dn3lem2},
we can show the following lemma in the same way as Lemma \ref{Anlem2}.
\begin{lemma}\label{Dn3lem3}
	Let $C$ be a $\F_4$-code. Then
	\[
	\Gamma_C^*=\left\{
\begin{array}{ll}
\Gamma_{C^{\perp}} & \mathrm{if~} n\in2 \Z \mathrm{~and~} n\notin 4\Z \\
\Gamma_{\overline{C}^{\perp}} & \mathrm{if~} n\in 4\Z.
\end{array}\right.
	\]
\end{lemma}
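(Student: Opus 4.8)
The plan is to follow the template of Lemma \ref{Anlem2}, replacing its single characterization of the integrality of $B$ by the two cases recorded in Lemma \ref{Dn3lem1}, and using the identifications of $C'$ and $C''$ supplied by Lemma \ref{Dn3lem2}. The argument rests on two structural facts about $\rho^{\oplus m}$. First, $\rho^{\oplus m}$ is a surjective additive homomorphism with kernel $\Lambda^{\oplus m}$, so $\rho^{\oplus m}(\Gamma_C)=C$; thus letting $y$ range over $\Gamma_C$ makes $\rho^{\oplus m}(y)$ range over all of $C$. Second, since $\Lambda^{\oplus m}\subset \Gamma_C$ has full rank $mn$, the dual satisfies $\Gamma_C^*\subset (\Lambda^{\oplus m})^*=(\Lambda^*)^{\oplus m}$; this inclusion is exactly what lets me apply $\rho^{\oplus m}$ to an arbitrary $x\in\Gamma_C^*$ and feed it into Lemma \ref{Dn3lem1}. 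I would verify this inclusion at the outset.

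First I would treat the case $n\in 2\Z$, $n\notin 4\Z$. For $x\in(\Lambda^*)^{\oplus m}$, membership $x\in\Gamma_C^*$ means $B(x,y)\in\Z$ for every $y\in\Gamma_C$. By the first case of Lemma \ref{Dn3lem1}, this is equivalent to $\rho^{\oplus m}(x)\cdot\rho^{\oplus m}(y)\in\{0,1\}$ for every such $y$. Since $\rho^{\oplus m}(y)$ exhausts $C$, the condition reads $\rho^{\oplus m}(x)\cdot c\in\{0,1\}$ for every $c\in C$, i.e.\ $\rho^{\oplus m}(x)\in C'$. By Lemma \ref{Dn3lem2} we have $C'=C^{\perp}$, so $x\in\Gamma_C^*$ if and only if $\rho^{\oplus m}(x)\in C^{\perp}$, that is $x\in(\rho^{\oplus m})^{-1}(C^{\perp})=\Gamma_{C^{\perp}}$. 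This yields $\Gamma_C^*=\Gamma_{C^{\perp}}$.

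Next I would treat the case $n\in 4\Z$. The shape of the argument is identical, but the second case of Lemma \ref{Dn3lem1} replaces the ordinary inner product by the conjugate-twisted one: $B(x,y)\in\Z$ is equivalent to $\rho^{\oplus m}(x)\cdot\overline{\rho^{\oplus m}(y)}\in\{0,1\}$. Running $y$ over $\Gamma_C$ again lets $\rho^{\oplus m}(y)$ exhaust $C$, so $x\in\Gamma_C^*$ if and only if $\rho^{\oplus m}(x)\cdot\overline{c}\in\{0,1\}$ for every $c\in C$, i.e.\ $\rho^{\oplus m}(x)\in C''$. By Lemma \ref{Dn3lem2}, $C''=\overline{C}^{\perp}$, and hence $\Gamma_C^*=\Gamma_{\overline{C}^{\perp}}$.

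Once Lemmas \ref{Dn3lem1} and \ref{Dn3lem2} are in hand the proof is essentially bookkeeping, so there is no single hard step. The only point demanding care is not to conflate the two cases: one must recognize that for $n\in 4\Z$ it is the conjugate-twisted set $C''$, hence $\overline{C}^{\perp}$ rather than $C^{\perp}$, that governs the dual, which is precisely why the conclusion for $n\in 4\Z$ differs from the one for $n\notin 4\Z$. Keeping the placement of the overbar consistent between Lemma \ref{Dn3lem1} and Lemma \ref{Dn3lem2} is the whole substance of the verification.
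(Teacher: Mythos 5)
Your proposal is correct and follows essentially the same route as the paper, which proves this lemma by running the two-inclusion argument of Lemma \ref{Anlem2} with the integrality criterion of Lemma \ref{Dn3lem1} and the identifications $C'=C^{\perp}$, $C''=\overline{C}^{\perp}$ from Lemma \ref{Dn3lem2}. Your explicit verification that $\Gamma_C^*\subset(\Lambda^{\oplus m})^*=(\Lambda^*)^{\oplus m}$, which justifies applying $\rho^{\oplus m}$ to elements of $\Gamma_C^*$, is a point the paper leaves implicit, and your bookkeeping of the conjugate in the case $n\in 4\Z$ matches the paper exactly.
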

Therefore, we have the following theorem.
\begin{theorem}\label{Dn3the1}
Let $C$ be a $\F_4$-code.
	\begin{itemize}
		\item The case $n\in 2\Z$ and $n\notin 4\Z$
		\begin{enumerate}
			\item $\Gamma_C$ is integral if and only if $C\subset C^{\perp}$.
			\item $\Gamma_C$ is unimodular if and only if $C$ is Euclidean self-dual.
			\item $\Gamma_C$ is even if and only if $\mathrm{wt_L}(x)\in 4\Z$ for every $x\in C$.
			\item $\Gamma_C$ is even unimodular if and only if $C$ is Type II. 
		\end{enumerate}
		\item The case $n\in 4\Z$ and $n\notin 8\Z$
		\begin{enumerate}
			\item $\Gamma_C$ is integral if and only if $C\subset \overline{C}^{\perp}$.
			\item $\Gamma_C$ is unimodular if and only if $C$ is Hermitian self-dual.
			\item $\Gamma_C$ is even if and only if $\mathrm{wt_H}(x)\in 2\Z$ for every $x\in C$.
			\item $\Gamma_C$ is even unimodular if and only if $C$ is Hermitian self-dual. 
		\end{enumerate}
		\item The case $n\in 8\Z$
		\begin{enumerate}
			\item $\Gamma_C$ is integral if and only if $C\subset \overline{C}^{\perp}$.
			\item $\Gamma_C$ is unimodular if and only if $C$ is Hermitian self-dual.
			\item $\Gamma_C$ is even if and only if $\mathrm{wt_B}(x)\in 2\Z $ for every $x\in C$.
			\item $\Gamma_C$ is even unimodular if and only if $C$ is Hermitian self-dual and $\mathrm{wt_B}(x)\in 2\Z$ for every $x\in C$. 
		\end{enumerate}
	\end{itemize}
\end{theorem}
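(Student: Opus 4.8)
The plan is to follow the same template used for \cref{Anthe}, \cref{Dnthe} and \cref{Dn2the1}: combine the bilinear-form criteria of \cref{Dn3lem1} with the dual-lattice identification of \cref{Dn3lem3}, translating each statement about $\Gamma_C$ into a statement about the code $C$ via the map $\rho^{\oplus m}$. The essential structural fact I would record first is that, since $\rho^{\oplus m}$ is a surjective homomorphism with kernel $\Lambda^{\oplus m}$, the assignment $C \mapsto \Gamma_C = (\rho^{\oplus m})^{-1}(C)$ is an inclusion-preserving bijection between $\F_4$-codes of length $m$ and lattices lying between $\Lambda^{\oplus m}$ and $(\Lambda^*)^{\oplus m}$; moreover $\rho^{\oplus m}$ restricts to a surjection $\Gamma_C \to C$, so a condition holding for all $x \in \Gamma_C$ is equivalent to the corresponding condition holding for all codewords of $C$.

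Granting this, integrality and unimodularity are immediate. For integrality I would write $\Gamma_C$ integral $\Leftrightarrow \Gamma_C \subseteq \Gamma_C^*$ and invoke \cref{Dn3lem3}: in the case $n \in 2\Z$, $n \notin 4\Z$ this reads $\Gamma_C \subseteq \Gamma_{C^\perp}$, equivalent to $C \subseteq C^\perp$ by the inclusion-preserving correspondence, while for $n \in 4\Z$ it reads $\Gamma_C \subseteq \Gamma_{\overline{C}^\perp}$, equivalent to $C \subseteq \overline{C}^\perp$. Unimodularity is the same argument with equality in place of inclusion, giving Euclidean self-duality in the first case and Hermitian self-duality in the remaining two. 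For evenness I would apply the second half of \cref{Dn3lem1} coordinatewise: for $n \notin 4\Z$ the condition is $\mathrm{wt_L}(\rho^{\oplus m}(x)) \in 4\Z$, for $n \in 8\Z$ it is $\mathrm{wt_B}(\rho^{\oplus m}(x)) \in 2\Z$, and these translate directly. The one computation worth isolating is the $n \in 4\Z$, $n \notin 8\Z$ case, where \cref{Dn3lem1} gives the condition $\rho^{\oplus m}(x) \cdot \overline{\rho^{\oplus m}(x)} = 0$; since $c_i \overline{c_i} = c_i^3$ equals $0$ for $c_i = 0$ and $1$ for each nonzero $c_i$, one has $c \cdot \overline{c} = \mathrm{wt_H}(c) \cdot 1$ in $\F_4$, which vanishes exactly when $\mathrm{wt_H}(c) \in 2\Z$, matching the stated criterion.

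The point that needs genuine care, and where the three cases truly diverge, is even unimodularity. In the range $n \in 2\Z$, $n \notin 4\Z$ it is simply the conjunction of Euclidean self-duality and $\mathrm{wt_L} \in 4\Z$, that is, Type II, with nothing to simplify. The subtle observation is that for $n \in 4\Z$, $n \notin 8\Z$ the evenness condition is already subsumed by unimodularity: if $C = \overline{C}^\perp$ then every $c \in C$ satisfies $c \cdot \overline{d} = 0$ for all $d \in C$, and specializing $d = c$ gives $c \cdot \overline{c} = 0$, hence $\mathrm{wt_H}(c) \in 2\Z$ for every codeword automatically; thus even unimodular is equivalent to Hermitian self-dual alone. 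By contrast, for $n \in 8\Z$ the evenness condition is $\mathrm{wt_B}(c) \in 2\Z$, equivalently $N_2(c) \equiv 0 \pmod 2$, and this is not forced by Hermitian self-duality, so both conditions must be retained in the final statement. I expect verifying this asymmetry, namely that self-duality absorbs the weight condition precisely when $n \not\equiv 0 \pmod 8$, to be the only nonroutine step; everything else is a direct transcription of the two preceding lemmas through the correspondence $C \mapsto \Gamma_C$.
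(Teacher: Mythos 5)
Your proposal is correct and follows essentially the same route as the paper, which likewise derives \cref{Dn3the1} directly from \cref{Dn3lem1} and \cref{Dn3lem3} via the inclusion-preserving correspondence $C \mapsto \Gamma_C$. The only cosmetic difference is that where the paper cites \cite[Theorem 1]{MACWILLIAMS1978288} for the equivalence of even Hamming weights with Hermitian self-orthogonality, you instead give the self-contained computation $c \cdot \overline{c} = \mathrm{wt}_{\mathrm{H}}(c)\cdot 1$ in $\F_4$ and specialize $d = c$ in $c \cdot \overline{d} = 0$, which correctly reproduces both the evenness criterion for $n \in 4\Z$, $n \notin 8\Z$ and the asymmetry with the $n \in 8\Z$ case, where $\mathrm{wt}_{\mathrm{B}}(c) \equiv N_2(c) \pmod 2$ is genuinely not forced by Hermitian self-duality.
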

Note that a $\F_4$-code $C$ is $\mathrm{wt_H}(x)\in 2\Z$ for every $x\in C$ if and only if $C\subset \overline{C}^{\perp}$ (see \cite[Theorem 1.]{MACWILLIAMS1978288}).
\begin{remark}
	Let $C$ be an Hermitian self-dual code of length $m$ over $\F_4$, and $x\in C$. Since $x\cdot \overline{x}=0$, we see that 
	\[(x+\overline{x})\cdot (x+\overline{x})+x\cdot \overline{x}=
	x\cdot x+\overline{x}\cdot \overline{x}.
	\]We have
	\[
	\mathrm{wt_B}(x)\in 2\Z
	\Leftrightarrow
	x\cdot x+\overline{x}\cdot \overline{x}=0
	\Leftrightarrow
	\mathrm{wt_L}(x)\in 2\Z.
	\]Thus, $\Gamma_C$ is even if and only if $\mathrm{wt_L}(x)\in 2\Z$ for every $x\in C$. 
\end{remark}
\begin{example}
In table \ref{f4D4} to \ref{f4D12}, we list some even unimodular lattices of rank 24 from codes.

\begin{table}[h]
\begin{center}
\caption{Lattice of type $D_4$}\label{f4D4}
\vspace{-10pt}
\begin{tabular}{|c|c|c|} \hline
   Code $C$ (see \cite{MACWILLIAMS1978288})
   &  $E_6$ & $3C_2$  
   \\ \hline
   Sublattice of $\Gamma_C$ 
   & $6D_4$ & $3E_8$ 
   \\ \hline
 \end{tabular}
 \vspace{10pt}
\caption{Lattice of type $D_6$}
\vspace{-10pt}
\begin{tabular}{|c|c|} \hline
   Code $C$ (see \cite{GABORIT2002171})
   &  $C_4$ 
   \\ \hline
   Sublattice of $\Gamma_C$ 
   & $4D_6$ 
   \\ \hline
 \end{tabular}
\vspace{10pt}
 \caption{Lattice of type $D_{12}$}\label{f4D12}
\vspace{-10pt}
\begin{tabular}{|c|c|} \hline
   Code $C$ (see \cite{MACWILLIAMS1978288})
   &  $C_2$ 
   \\ \hline
   Sublattice of $\Gamma_C$ 
   & $D_{24}$  
   \\ \hline
 \end{tabular}

\end{center}
\end{table}

\end{example}

\subsection{Lattices of type $D_n$ and $(\F_2\times \F_2)$-codes}\label{Dnsub4}
Let $n\geq 4$ be an even integer and $(\Lambda,b)$ be a lattice of type $D_n$. Then $\Lambda$ has a basis $(e_1, \dots ,e_n)$ such that 
\[
	b( e_{i}, e_{j} )
	= \begin{cases}
	2 & \text{if $\lvert j - i \rvert = 0$, i.e., $j = i$}, \\
	-1 & \text{if ($\lvert j - i \rvert = 1$ and $\{ i, j \} \neq \{ n-1, n \}$) or $\{ i, j \} = \{ n-2, n \}$}, \\
	0 & \text{otherwise}.
	\end{cases}
\]
We define
\[
f_i=e_i ~(1\leq i \leq n-1),~f_n=e_n+\sum_{l=1}^{n-2}e_l,
\]
and
\[
f_i^*=\sum_{l=1}^{n}\frac{1}{4}(n-2|i-l|)f_l.
\]
We have 
\[
b(f_i,f_j^*)=\delta_{ij} \hspace{10pt} (\mathrm{Kronecker~delta}).
\]
Hence,
$(f_1, \dots, f_n)$ is a basis of $\Lambda$, and $(f_1^*, \dots, f_n^*)$ is a basis of $\Lambda^*$. 

Let $m$ be a positive integer.
We consider the mapping
\[
\begin{array}{rccc}
\rho^{\oplus m} \colon & (\Lambda^*)^{\oplus m}            &\longrightarrow  & (\F_2\times \F_2)^{\oplus m}        \\
            & \rotatebox{90}{$\in$}&                 & \rotatebox{90}{$\in$} \\
            &\left(\displaystyle \sum_{j=1}^{n} x_{ij} f^*_j\right)_{1\leq i \leq m}  & \longmapsto &\left(\left( \displaystyle \sum_{j=1}^{n/2} x_{i(2j-1)} ~ \mathrm{mod}~2\right),\left( \displaystyle \sum_{j=1}^{n/2} x_{i(2j)} ~ \mathrm{mod}~2\right) \right)_{1\leq i \leq m ~.} 
\end{array}
\]
This is an additive homomorphism. Additionally, the kernel of this homomorphism is $\Lambda^{\oplus m}$.

Let $C$ be a $(\F_2\times \F_2)$-code of length $m$. Then define 
\[
\Gamma_C:= (\rho^{\oplus m})^{-1}(C) \subset (\Lambda^*)^{\oplus m}.
\]

Let $x,y \in (\Lambda^*)^{\oplus m}, ~x=(x_1,\dots ,x_m), ~y=(y_1,\dots ,y_m)$ with $x_i,y_i \in \Lambda^*$ for $i=1,\dots ,m$. We define a symmetric bilinear form on $(\Lambda^*)^{\oplus m}$ by
\[
B(x,y)=\sum_{i=1}^{m}b(x_i,y_i).
\]
Then the pair $(\Gamma_C, B)$ is a lattice of rank $mn$.

We give some properties of the bilinear form $B$ and the lattice $\Gamma_C$.
\begin{lemma}\label{Dn4lem1}
	Let $x,y \in (\Lambda^*)^{\oplus m}$.
\begin{itemize}
	\item The case $n \notin 4\Z$ 
	\[
	B(x,y)\in \Z \Leftrightarrow \rho^{\oplus m}(x)\cdot \rho^{\oplus m}(y)\in \{ (0,0),(1,1) \}, 
	\]
\[
B(x,x) \in 2\Z 
		\Leftrightarrow 
\mathrm{wt_L} (\rho^{\oplus m} (x))\in 4\Z. 
\]
	\item The case $n \in 4\Z$
\[
	B(x,y)\in \Z \Leftrightarrow \rho^{\oplus m}(x)\cdot \overline{\rho^{\oplus m}(y)}\in \{(0,0),(1,1)\}, 
	\]
	\begin{eqnarray*}
B(x,x) \in 2\Z 
		\Leftrightarrow 
		\left\{
\begin{array}{ll}
\mathrm{wt_H} \left(\rho^{\oplus m} (x)\right)\in 2\Z 
& \mathrm{if}~ n\notin 8\Z, \\
\rho^{\oplus m} (x)\cdot \overline{\rho^{\oplus m} (x)}=(0,0) 
& \mathrm{if}~ n\in 8\Z.
\end{array}
\right.
	\end{eqnarray*}
\end{itemize}
\end{lemma}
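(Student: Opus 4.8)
The plan is to piggyback on the computation of the bilinear form already performed in \cref{Dn2lem1} and \cref{Dn3lem1}. The lattice $\Lambda$, the bases $(f_1,\dots,f_n)$ and $(f_1^*,\dots,f_n^*)$, and hence the form $b$ are identical to those used there, so the expansion of $b(x_i,y_i)$ in the integer coordinates $x_{ij},y_{ij}$ is verbatim the same. Writing $P_z=\sum_{j=1}^{n/2}z_{i(2j-1)}$ and $Q_z=\sum_{j=1}^{n/2}z_{i(2j)}$ (suppressing the index $i$), I would quote that expansion and read off the obstruction to integrality: for $n\notin 4\Z$ it is $\frac n4(P_xP_y+Q_xQ_y)$, and for $n\in 4\Z$ it is $-\frac12(P_xQ_y+Q_xP_y)$. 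The only new content is then translating these integer congruences into the algebra of $\F_2\times\F_2$.

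For that dictionary I put $\rho^{\oplus 1}(x_i)=(a_i,b_i)$ and $\rho^{\oplus 1}(y_i)=(c_i,d_i)$, where $a_i\equiv P_x$, $b_i\equiv Q_x$, $c_i\equiv P_y$ and $d_i\equiv Q_y\pmod 2$. When $n\notin 4\Z$ the coefficient $\frac n4$ is a half-integer, so the obstruction lies in $\Z$ iff $a_ic_i+b_id_i\equiv 0\pmod 2$; since $(a_i,b_i)\cdot(c_i,d_i)=(a_ic_i,b_id_i)$, this is exactly $\rho^{\oplus 1}(x_i)\cdot\rho^{\oplus 1}(y_i)\in\{(0,0),(1,1)\}$. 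When $n\in 4\Z$ the coefficient $-\frac12$ forces $a_id_i+b_ic_i\equiv 0\pmod 2$, and since $\overline{(c_i,d_i)}=(d_i,c_i)$ gives $(a_i,b_i)\cdot\overline{(c_i,d_i)}=(a_id_i,b_ic_i)$, this is exactly $\rho^{\oplus 1}(x_i)\cdot\overline{\rho^{\oplus 1}(y_i)}\in\{(0,0),(1,1)\}$. Summing over $i$ yields the two characterizations of $B(x,y)\in\Z$.

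For the norm I would again quote the expansions of $b(x_i,x_i)$ from \cref{Dn2lem1}/\cref{Dn3lem1}: it lies in $\frac n4(P_x^2+Q_x^2)+2\Z$ for $n\notin 4\Z$, in $\frac n4(P_x^2+Q_x^2)-P_xQ_x+2\Z$ for $n\in 4\Z\setminus 8\Z$, and in $P_xQ_x+2\Z$ for $n\in 8\Z$. In the last two cases I reduce modulo $2$, using $P_x^2\equiv a_i$ and $Q_x^2\equiv b_i\pmod 2$: for $n\in 4\Z\setminus 8\Z$ one gets $b(x_i,x_i)\equiv a_i+b_i+a_ib_i\pmod 2$, which vanishes iff $(a_i,b_i)=(0,0)$, whence $B(x,x)\equiv\mathrm{wt_H}(\rho^{\oplus m}(x))\pmod 2$; for $n\in 8\Z$ one gets $b(x_i,x_i)\equiv a_ib_i\pmod 2$, whence $B(x,x)\equiv\sum_i a_ib_i\pmod 2$. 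Since $\rho^{\oplus m}(x)\cdot\overline{\rho^{\oplus m}(x)}=\bigl(\textstyle\sum_i a_ib_i,\sum_i a_ib_i\bigr)$, the vanishing of $B(x,x)\bmod 2$ is precisely the condition $\rho^{\oplus m}(x)\cdot\overline{\rho^{\oplus m}(x)}=(0,0)$.

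The step I expect to need the most care is the case $n\notin 4\Z$ of the norm, where one must argue modulo $4$ rather than modulo $2$: here $\frac n2$ is odd, so $\frac n4\sum_i(P_x^2+Q_x^2)\in 2\Z$ is equivalent to $\sum_i(P_x^2+Q_x^2)\equiv 0\pmod 4$, and since $P_x^2\equiv(P_x\bmod 2)\pmod 4$ this sum agrees modulo $4$ with $\sum_i(a_i+b_i)=\mathrm{wt_L}(\rho^{\oplus m}(x))$, giving $B(x,x)\in 2\Z\Leftrightarrow\mathrm{wt_L}(\rho^{\oplus m}(x))\in 4\Z$. Everything else is the same bookkeeping as in \cref{Dn2lem1} and \cref{Dn3lem1}, with the conjugation $\overline{(a,b)}=(b,a)$ playing the role of the conjugations used there in the $n\in 4\Z$ cases.
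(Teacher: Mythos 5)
Your proposal is correct and follows essentially the same route as the paper: the paper's proof likewise begins ``Similar to Lemma \ref{Dn2lem1}, we have\dots'', reuses the verbatim expansion of $b(x_i,y_i)$, and then translates the fractional obstructions $\tfrac{n}{4}(P_xP_y+Q_xQ_y)$ resp.\ $-\tfrac12(P_xQ_y+Q_xP_y)$ and the mod-$4$/mod-$2$ norm residues into the componentwise algebra of $\F_2\times\F_2$ with $\overline{(a,b)}=(b,a)$, exactly as you do. Your mod-$4$ handling of the $n\notin 4\Z$ norm case via $P_x^2\equiv(P_x\bmod 2)\pmod 4$ matches the paper's case analysis of $2b(x_i,x_i)\bmod 4$, so there is nothing to add.
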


\begin{proof}
	Let $x=(x_i)_{1\leq i \leq m} ,~y=(y_i)_{1\leq i \leq m} \in (\Lambda^*)^{\oplus m}$, 
	$x_i=\sum_{j=1}^{n}x_{ij}f_j^*,~y_i=\sum^{n}_{j=1} y_{ij}f_j^*$ with $x_{ij},~y_{ij}\in \Z$ for $i=1,\dots ,m$, $j=1,\dots ,n$.	 
Similar to Lemma \ref{Dn2lem1}, we have
\begin{eqnarray*}
	b(x_i, y_i)
	&=&\sum_{j=1}^{n/2}\sum_{s=1}^{n/2}\left(\frac{n}{4}-|s-j|\right)x_{i(2j-1)}y_{i(2s-1)}
		+\sum_{j=1}^{n/2}\sum_{s=1}^{n/2}\left(\frac{n}{4}-|s-j|\right)x_{i(2j)}y_{i(2s)}\\
	&~&+\sum_{j=1}^{n/2}\sum_{s=1}^{n/2}\left(\frac{n}{4}-\frac{1}{2}-\left|s-j\right|\right)x_{i(2j-1)}y_{i(2s)}
		+\sum_{j=1}^{n/2}\sum_{s=1}^{n/2}\left(\frac{n}{4}-\frac{1}{2}-|s-j|\right)x_{i(2j)}y_{i(2s-1)}\\
	&~&+\sum_{j=1}^{(n/2)-1}(x_{i(2j+1)}y_{i(2j)}+x_{i(2j)}y_{i(2j+1)})	.
\end{eqnarray*}
Let $n\notin 4\Z$. Since $n\in2\Z$, we see that
\begin{eqnarray*}
b(x_i,y_i)\in \Z 
&\Leftrightarrow &
\frac{n}{4} \left( \left(\sum_{j=1}^{n/2}x_{i(2j-1)}\right) \left(\sum_{s=1}^{n/2}y_{i(2s-1)}\right)+\left(\sum_{j=1}^{n/2}x_{i(2j)}\right) \left(\sum_{s=1}^{n/2}y_{i(2s)}\right) \right) \in \Z \\ 
&\Leftrightarrow &
\rho^{\oplus 1}(x_i)\cdot \rho^{\oplus 1}(y_i)\in \{ (0,0),(1,1)\}.
\end{eqnarray*}
Hence,
\begin{eqnarray*}
B(x,y) \in \Z 
 \Leftrightarrow 
\rho^{\oplus m}(x)\cdot \rho^{\oplus m}(y)\in \{(0,0),(1,1)\}.
\end{eqnarray*}
Let $n\in 4\Z$. We see that
\begin{eqnarray*}
b(x_i,y_i)\in \Z 
&\Leftrightarrow &
-\frac{1}{2} \left( \left(\sum_{j=1}^{n/2}x_{i(2j-1)}\right) \left(\sum_{s=1}^{n/2}y_{i(2s)}\right)+\left(\sum_{j=1}^{n/2}x_{i(2j)}\right) \left(\sum_{s=1}^{n/2}y_{i(2s-1)}\right) \right) \in \Z \\ 
&\Leftrightarrow &
\rho^{\oplus 1}(x_i)\cdot \overline{\rho^{\oplus 1}(y_i)}\in \{ (0,0),(1,1)\}.
\end{eqnarray*}
Hence,
\begin{eqnarray*}
B(x,y) \in \Z 
 \Leftrightarrow 
\rho^{\oplus m}(x)\cdot \overline{\rho^{\oplus m}(y)}\in \{(0,0),(1,1)\}.
\end{eqnarray*}

Moreover, we have 
\begin{eqnarray*}
b(x_i,x_i) 
&=&\frac{n}{4}\sum_{j=1}^{n/2}\sum_{s=1}^{n/2}x_{i(2j-1)}x_{i(2s-1)}
		+\frac{n}{4}\sum_{j=1}^{n/2}\sum_{s=1}^{n/2}x_{i(2j)}x_{i(2s)}\\
	&~&-2\sum_{1\leq j<s\leq n/2}|s-j|(x_{i(2j-1)}x_{i(2s-1)}+
		x_{i(2j)}y_{i(2s)})\\
	&~&+2\sum_{j=1}^{n/2}\sum_{s=1}^{n/2}\left(\frac{n}{4}-\frac{1}{2}-\left|s-j\right|\right)x_{i(2j-1)}x_{i(2s)}\\
	&~&+2\sum_{j=1}^{(n/2)-1}x_{i(2j+1)}x_{i(2j)}.	
\end{eqnarray*}
Let $n\notin 4\Z$. From $n\in2\Z$, we see that
\begin{eqnarray*}
	b(x_i,x_i)\in 	
	\frac{n}{4} \left( \left(\sum_{j=1}^{n/2}x_{i(2j-1)}\right)^2 +\left(\sum_{j=1}^{n/2}x_{i(2j)}\right)^2 \right) + 2\Z. 
\end{eqnarray*}
Thus,
\begin{eqnarray*}
2b(x_i,x_i)\equiv
\left\{
\begin{array}{ll}
	0 \mod 4 & \mathrm{if}~\rho^{\oplus 1}(x_i)=(0,0),\\
	1 \mod 4 & \mathrm{if}~\rho^{\oplus 1}(x_i)=(1,0) ~\mathrm{or}~ (0,1) ,\\
	2 \mod 4 & \mathrm{if}~\rho^{\oplus 1}(x_i)=(1,1). 
\end{array}\right.	
\end{eqnarray*}
Hence,
\[
B(x,x)\in 2\Z \Leftrightarrow \mathrm{wt_L}(\rho^{\oplus m}(x))\in 4\Z.
\]

Let $n\in 4\Z$ and $n\notin 8\Z$. We see that 
\begin{eqnarray*}
	b(x_i,x_i)\in \frac{n}{4} \left( \left(\sum_{j=1}^{n/2}x_{i(2j-1)}\right)^2 +\left(\sum_{j=1}^{n/2}x_{i(2j)}\right)^2 \right)
	-\left(\sum_{j=1}^{n/2}x_{i(2j-1)}\right)\left(\sum_{j=1}^{n/2}x_{i(2j)}\right)+2\Z.
\end{eqnarray*}
Thus,
\begin{eqnarray*}
	b(x_i,x_i)\in 2\Z \Leftrightarrow 
	\rho^{\oplus 1} (x_i)=0.
\end{eqnarray*}
Hence,
\begin{eqnarray*}
	B(x,x)\in 2\Z \Leftrightarrow
	\mathrm{wt_H}\left(\rho^{\oplus m} (x)\right)\in 2\Z .
\end{eqnarray*}

Let $n\in 8\Z$. We see that 
\begin{eqnarray*}
	b(x_i,x_i)\in 
	\left(\sum_{j=1}^{n/2}x_{i(2j-1)}\right)\left(\sum_{j=1}^{n/2}x_{i(2j)}\right)+2\Z.
\end{eqnarray*}
Thus,
\begin{eqnarray*}
	b(x,x)\in 2\Z \Leftrightarrow \rho^{\oplus 1}(x_i)\cdot \overline{\rho^{\oplus 1}(x_i)}=(0,0).
\end{eqnarray*}
Hence,
\begin{eqnarray*}
	B(x,x)\in 2\Z \Leftrightarrow \rho^{\oplus m}(x)\cdot \overline{\rho^{\oplus m}(x)}=(0,0).
\end{eqnarray*}
\end{proof}

\begin{lemma}\label{Dn4lem2}
Let $C$ be a $(\F_2\times \F_2)$-code of length $m$. We define
\[
C'=\{x\in (\F_2\times \F_2)^{\oplus m}~|~x\cdot y \in \{(0,0),(1,1)\}~\mathrm{for~every}~y\in C\},
\]	
and
\[
C''=\{x\in (\F_2\times \F_2)^{\oplus m}~|~x\cdot \overline{y} \in \{(0,0),(1,1)\}~\mathrm{for~every}~y\in C\}.
\]
Then $C'=C^{\perp}$, $C''=\overline{C}^{\perp}$.
\end{lemma}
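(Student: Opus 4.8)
The plan is to follow the proof of Lemma \ref{Dn2lem2} almost verbatim, exploiting the $(\F_2\times\F_2)$-module structure of $C$ together with a carefully chosen scalar multiplier that forces any admissible nonzero inner-product value to leave the set $\{(0,0),(1,1)\}$. Concretely, I would first prove $C'=C^\perp$ and then deduce $C''=\overline{C}^{\perp}$ by applying the first identity to the conjugate code $\overline{C}$.

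First I would note that $C^\perp\subseteq C'$ is immediate, since $(0,0)\in\{(0,0),(1,1)\}$. For the reverse inclusion, take $x\in C'$. The only nonzero admissible value is $(1,1)$, so it suffices to rule out the existence of $y\in C$ with $x\cdot y=(1,1)$. Here is the key step: because $(1,0)\cdot(1,1)=(1,0)\notin\{(0,0),(1,1)\}$, and $(1,0)y$ again lies in $C$ (as $C$ is a submodule), the $R$-linearity of the inner product gives $x\cdot\bigl((1,0)y\bigr)=(1,0)\,(x\cdot y)=(1,0)$, contradicting $x\in C'$. Hence $x\cdot y=(0,0)$ for every $y\in C$, i.e.\ $x\in C^\perp$, so $C'=C^\perp$.

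For the second identity I would observe that conjugation $(a,b)\mapsto(b,a)$ is a ring automorphism of $\F_2\times\F_2$, so $\overline{C}$ is again a code. The defining condition of $C''$ reads $x\cdot\overline{y}\in\{(0,0),(1,1)\}$ for all $y\in C$, which is the same as $x\cdot z\in\{(0,0),(1,1)\}$ for all $z\in\overline{C}$; applying the already-established first part with $\overline{C}$ in place of $C$ yields $C''=\overline{C}^{\perp}$.

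I do not expect a genuine obstacle here: the entire content is the multiplier trick, and the only point requiring care is to verify that $(1,0)$ (equivalently $(0,1)$) sends $(1,1)$ outside the admissible set while preserving membership in $C$ under scalar multiplication. Everything else is bookkeeping identical to Lemmas \ref{Dn2lem2} and \ref{Dn3lem2}.
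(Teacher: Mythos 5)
Your proof is correct and is essentially the paper's intended argument: the paper disposes of this lemma by saying it goes ``in the same way as'' the earlier $(\F_2+u\F_2)$ case (Lemma~\ref{Dn2lem2}), and your multiplier trick with $(1,0)$ playing the role of $1+u$ --- valid since $(1,0)y\in C$ by the $R$-module structure, and $(1,0)\cdot(1,1)=(1,0)\notin\{(0,0),(1,1)\}$ --- is exactly that adaptation. Your reduction of $C''=\overline{C}^{\perp}$ to the first identity via the ring automorphism $(a,b)\mapsto(b,a)$ is also sound and matches the paper's implicit treatment.
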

\begin{proof}
It can be proven in the same way as Lemma \ref{Dnlem2}.
\end{proof}

From Lemma \ref{Dn4lem2},
we can show the following lemma in the same way as Lemma \ref{Anlem2}.
\begin{lemma}\label{Dn4lem3}
	Let $C$ be a $(\F_2\times \F_2)$-code of length $m$. Then
	\[
	\Gamma_C^*=\left\{
\begin{array}{ll}
\Gamma_{C^{\perp}} & \mathrm{if~} n\in2 \Z \mathrm{~and~} n\notin 4\Z \\
\Gamma_{\overline{C}^{\perp}} & \mathrm{if~} n\in 4\Z.
\end{array}
\right.
	\]
\end{lemma}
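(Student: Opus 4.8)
The plan is to reduce the statement to a purely code-theoretic identity, exactly as was done for the $A_n$ case in Lemma \ref{Anlem2}, by combining the integrality criterion of Lemma \ref{Dn4lem1} with the identification of the auxiliary sets $C'$ and $C''$ in Lemma \ref{Dn4lem2}. Throughout I would use that $\rho^{\oplus m}$ is a surjective additive homomorphism with kernel $\Lambda^{\oplus m}$, so that the restriction of $\rho^{\oplus m}$ to $\Gamma_C = (\rho^{\oplus m})^{-1}(C)$ has image exactly $C$.

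First I would record the preliminary containment $\Gamma_C^* \subseteq (\Lambda^*)^{\oplus m}$. Since $0 \in C$ we have $\Lambda^{\oplus m} = (\rho^{\oplus m})^{-1}(0) \subseteq \Gamma_C$, and dualizing reverses inclusions, so $\Gamma_C^* \subseteq (\Lambda^{\oplus m})^* = (\Lambda^*)^{\oplus m}$, the last equality being immediate from $B(x,y)=\sum_i b(x_i,y_i)$ by testing against vectors supported in a single coordinate. This guarantees that every $x \in \Gamma_C^*$ lies in the domain of $\rho^{\oplus m}$, so that applying $\rho^{\oplus m}$ to elements of $\Gamma_C^*$ is legitimate; this is the point that makes the whole reduction go through.

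Next I would treat the two cases separately. Suppose first $n \in 2\Z$ and $n \notin 4\Z$. By definition of the dual lattice, $x \in \Gamma_C^*$ if and only if $B(x,y) \in \Z$ for every $y \in \Gamma_C$. By Lemma \ref{Dn4lem1} this is equivalent to $\rho^{\oplus m}(x) \cdot \rho^{\oplus m}(y) \in \{(0,0),(1,1)\}$ for every $y \in \Gamma_C$, and since $\rho^{\oplus m}(\Gamma_C) = C$, this is in turn equivalent to $\rho^{\oplus m}(x) \cdot c \in \{(0,0),(1,1)\}$ for every $c \in C$, i.e.\ to $\rho^{\oplus m}(x) \in C'$ in the notation of Lemma \ref{Dn4lem2}. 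By that lemma $C' = C^{\perp}$, so $x \in \Gamma_C^*$ if and only if $\rho^{\oplus m}(x) \in C^{\perp}$, that is, $x \in \Gamma_{C^{\perp}}$. The case $n \in 4\Z$ is identical after replacing the pairing $\rho^{\oplus m}(x) \cdot \rho^{\oplus m}(y)$ by $\rho^{\oplus m}(x) \cdot \overline{\rho^{\oplus m}(y)}$, which is the form appearing in Lemma \ref{Dn4lem1} in that regime, and $C'$ by $C''$; since Lemma \ref{Dn4lem2} gives $C'' = \overline{C}^{\perp}$, this yields $\Gamma_C^* = \Gamma_{\overline{C}^{\perp}}$.

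The argument is essentially formal once Lemmas \ref{Dn4lem1} and \ref{Dn4lem2} are in hand, so I do not expect a serious obstacle. The only points requiring care are the reduction of the universal quantifier over $y \in \Gamma_C$ to a universal quantifier over $c \in C$, which relies on the surjectivity of $\rho^{\oplus m}$ restricted to $\Gamma_C$ onto $C$, and the preliminary verification $\Gamma_C^* \subseteq (\Lambda^*)^{\oplus m}$ ensuring that $\rho^{\oplus m}$ may be applied to dual vectors. Both directions of each equivalence are captured simultaneously by the single chain of biconditionals, so no separate two-inclusion bookkeeping is needed.
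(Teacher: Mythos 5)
Your proof is correct and follows essentially the same route as the paper, which proves \cref{Dn4lem3} by combining the integrality criterion of \cref{Dn4lem1} with the identifications $C'=C^{\perp}$, $C''=\overline{C}^{\perp}$ of \cref{Dn4lem2}, exactly as in the two-inclusion argument of \cref{Anlem2}. Your only departures are cosmetic improvements: you run the argument as one chain of biconditionals instead of two separate inclusions, and you explicitly justify the containment $\Gamma_C^*\subseteq(\Lambda^*)^{\oplus m}$ (via $\Lambda^{\oplus m}\subseteq\Gamma_C$ and dualizing), a point the paper leaves implicit when it applies $\rho^{\oplus m}$ to elements of $\Gamma_C^*$.
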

Therefore, we have the following theorem.
\begin{theorem}\label{Dn4the1}
Let $C$ be a $(\F_2\times \F_2)$-code.
	\begin{itemize}
		\item The case $n\in 2\Z$ and $n\notin 4\Z$
		\begin{enumerate}
			\item $\Gamma_C$ is integral if and only if $C\subset C^{\perp}$.
			\item $\Gamma_C$ is unimodular if and only if $C$ is Euclidean self-dual.
			\item $\Gamma_C$ is even if and only if $\mathrm{wt_L}(x)\in 4\Z$ for every $x\in C$.
			\item $\Gamma_C$ is even unimodular if and only if $C$ is Type II. 
		\end{enumerate}
		\item The case $n\in 4\Z$ and $n\notin 8\Z$
		\begin{enumerate}
			\item $\Gamma_C$ is integral if and only if $C\subset \overline{C}^{\perp}$.
			\item $\Gamma_C$ is unimodular if and only if $C$ is Hermitian self-dual.
			\item $\Gamma_C$ is even if and only if $\mathrm{wt_H}(x)\in 2\Z$ for every $x \in C$.
			\item $\Gamma_C$ is even unimodular if and only if $C$ is Type IV. 
		\end{enumerate}
		\item The case $n\in 8\Z$
		\begin{enumerate}
			\item $\Gamma_C$ is integral if and only if $C\subset \overline{C}^{\perp}$.
			\item $\Gamma_C$ is unimodular if and only if $C$ is Hermitian self-dual.
			\item $\Gamma_C$ is even if and only if $C\subset \overline{C}^{\perp}$.
			\item $\Gamma_C$ is even unimodular if and only if $C$ is Hermitian self-dual. 
		\end{enumerate}
	\end{itemize}
\end{theorem}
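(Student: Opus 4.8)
The plan is to deduce all four equivalences in each of the three cases mechanically from Lemmas~\ref{Dn4lem1} and~\ref{Dn4lem3}, in exactly the manner by which Theorems~\ref{Dnthe}, \ref{Dn2the1} and~\ref{Dn3the1} follow from their preparatory lemmas. The only facts about the construction $C\mapsto\Gamma_C=(\rho^{\oplus m})^{-1}(C)$ that I need are that $\rho^{\oplus m}$ is surjective---indeed $\rho(f_1^*)=(1,0)$ and $\rho(f_2^*)=(0,1)$ already generate $\F_2\times\F_2$---with kernel $\Lambda^{\oplus m}$. Surjectivity gives $\rho^{\oplus m}(\Gamma_C)=C$ and the monotone correspondence $\Gamma_{C_1}\subseteq\Gamma_{C_2}\Leftrightarrow C_1\subseteq C_2$, hence $\Gamma_{C_1}=\Gamma_{C_2}\Leftrightarrow C_1=C_2$.

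First I would treat integrality and unimodularity uniformly. Since $\Gamma_C$ is integral exactly when $\Gamma_C\subseteq\Gamma_C^*$ and unimodular exactly when $\Gamma_C=\Gamma_C^*$, and since Lemma~\ref{Dn4lem3} identifies $\Gamma_C^*$ with $\Gamma_{C^{\perp}}$ for $n\in2\Z\setminus4\Z$ and with $\Gamma_{\overline{C}^{\perp}}$ for $n\in4\Z$, the monotonicity above converts these into $C\subseteq C^{\perp}$ versus $C=C^{\perp}$ (Euclidean self-duality) when $n\in2\Z\setminus4\Z$, and into $C\subseteq\overline{C}^{\perp}$ versus $C=\overline{C}^{\perp}$ (Hermitian self-duality) when $n\in4\Z$. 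This settles statements (1) and (2) in all three cases at once.

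Next, for evenness I would use that $\Gamma_C$ is even precisely when $B(x,x)\in2\Z$ for every $x\in\Gamma_C$ (by polarization $B(x+y,x+y)-B(x,x)-B(y,y)=2B(x,y)$ this already forces integrality, so it matches the definition), together with $\rho^{\oplus m}(\Gamma_C)=C$. The three criteria of Lemma~\ref{Dn4lem1} then read off directly as: $\mathrm{wt_L}(c)\in4\Z$ for all $c\in C$ when $n\in2\Z\setminus4\Z$; $\mathrm{wt_H}(c)\in2\Z$ for all $c\in C$ when $n\in4\Z\setminus8\Z$; and $c\cdot\overline{c}=(0,0)$ for all $c\in C$ when $n\in8\Z$. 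Statement (4) is then obtained by intersecting (2) with (3) and comparing against the definitions: Euclidean self-dual with $\mathrm{wt_L}\in4\Z$ is Type~II; Hermitian self-dual with $\mathrm{wt_H}\in2\Z$ is Type~IV; and for $n\in8\Z$ the equality $C=\overline{C}^{\perp}$ already entails the evenness condition $C\subseteq\overline{C}^{\perp}$, so even unimodular collapses to Hermitian self-dual.

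The one non-formal step, which I expect to be the main obstacle, is recasting the $n\in8\Z$ evenness condition ``$c\cdot\overline{c}=(0,0)$ for all $c\in C$'' as the symmetric statement ``$C\subseteq\overline{C}^{\perp}$'' used in (3); equivalently, upgrading a diagonal vanishing to full self-orthogonality. Plain polarization is insufficient: for $c,d\in C$ it only yields $c\cdot\overline{d}+\overline{c\cdot\overline{d}}=(0,0)$, i.e.\ $c\cdot\overline{d}\in\{(0,0),(1,1)\}$. To close the gap I would exploit the ring structure of $\F_2\times\F_2$. Via the idempotents $e_1=(1,0)$, $e_2=(0,1)$ the module $C$ splits as $C=C_1\times C_2$ for binary codes $C_1,C_2\subseteq\F_2^{\oplus m}$; a short computation gives $c\cdot\overline{c}=(u\cdot v,\,u\cdot v)$ when $c$ corresponds to $(u,v)\in C_1\times C_2$, and $\overline{C}^{\perp}=C_2^{\perp}\times C_1^{\perp}$. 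Both ``$c\cdot\overline{c}=(0,0)$ for all $c$'' and ``$C\subseteq\overline{C}^{\perp}$'' are thereby seen to be equivalent to $C_1\subseteq C_2^{\perp}$, which is the required equivalence. (Alternatively, applying the diagonal condition to $e_1c,\,e_2c\in C$ forces each coordinate of $c\cdot\overline{d}$ to vanish separately.)
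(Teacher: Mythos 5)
Your proposal is correct and takes essentially the same route as the paper, whose own ``proof'' of Theorem~\ref{Dn4the1} is just the mechanical deduction from Lemmas~\ref{Dn4lem1} and~\ref{Dn4lem3} that you carry out (dual/inclusion bookkeeping for (1)--(2), evenness read off from the quadratic criterion, and (4) by intersection). The one step the paper leaves unstated---upgrading the $n\in 8\Z$ diagonal condition $c\cdot\overline{c}=(0,0)$ to the symmetric statement $C\subseteq\overline{C}^{\perp}$---you identify and close correctly via the idempotent splitting $C=C_1\times C_2$ (or the $e_1c,e_2c$ trick), which plays the same role as the citation to MacWilliams et al.\ that the paper invokes for the analogous $\F_4$ fact after Theorem~\ref{Dn3the1}.
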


\begin{example}
In table \ref{vD4} to \ref{vD24}, we list some even unimodular lattices of rank 24 from codes. For $\Z/2\Z$-codes $C_1$ and $C_2$, $\textbf{CRT}(C_1,C_2)$ is presented in \cite{Dougherty1999TypeIS}. Table \ref{GM} gives generator matrices of the codes in table \ref{vD4} to \ref{vD24}, Where a generator matrix of a code $C$ is a matrix whose rows generate $C$. 

\begin{table}[h]
\begin{center}
\caption{Lattice of type $D_4$}\label{vD4}
\vspace{-10pt}
\begin{tabular}{|c|c|c|c|c|} \hline
   Code $C$ 
   &  $\textbf{CRT}(C_{6,1},C_{6,1}^{\perp})$ &  $\textbf{CRT}(C_{6,2},C_{6,2}^{\perp})$ &  $\textbf{CRT}(C_{6,3},C_{6,3}^{\perp})$ &  $\textbf{CRT}(C_{6,4},C_{6,4}^{\perp})$ 
   \\ \hline
   Sublattice of $\Gamma_C$ 
   & $D_{24}$ & $D_{16}\perp E_8 $ & $3E_8$  & $2D_{12}$
   \\ \hline
 \end{tabular}
 \vspace{10pt}
\caption{Lattice of type $D_8$}
\vspace{-10pt}
\begin{tabular}{|c|c|c|c|} \hline
   Code $C$ 
   & $\textbf{CRT}(C_{3,1},C_{3,1}^{\perp})$
   & $\textbf{CRT}(C_{3,2},C_{3,2}^{\perp})$
   & $\textbf{CRT}(C_{3,3},C_{3,3}^{\perp})$  
   \\ \hline
   Sublattice of $\Gamma_C$ 
   & $3E_{8}$ & $D_{16}\perp E_8$ & $D_{24}$   
   \\ \hline
 \end{tabular}
\vspace{10pt}
 \caption{Lattice of type $D_{12}$ and $D_{24}$}\label{vD24}
\vspace{-10pt}
\begin{tabular}{|c|c|c|} \hline
   Code $C$ 
   & $\textbf{CRT}(C_{2},C_{2}^{\perp})$
   & $\textbf{CRT}(C_{1},C_{1}^{\perp})$  
   \\ \hline
   Sublattice of $\Gamma_C$ 
   & $D_{24}$ & $D_{24}$  
   \\ \hline
 \end{tabular}
  \vspace{10pt}
\caption{Generator matrix}\label{GM}
\vspace{-10pt}
\begin{tabular}{|c|c|c|c|c|c|} \hline
   Code $C$ 
   & $C_{6,1}$ & $C_{6,2}$ & $C_{6,3}$ & \multicolumn{2}{c|}{$C_{6,4}$}
   \\ \hline
 Generator matrix of $C$ 
   & \arraycolsep=1pt
     $\left( \begin{array}{cccccc}
   	1 & 1 & 1 & 1 & 1 & 1
   \end{array}\right)$
   & \arraycolsep=1pt
     $\left( \begin{array}{cccccc}
   	1 & 1 &   &   &   &  \\
   	  &   & 1 & 1 & 1 & 1
   \end{array}\right)$  
   & \arraycolsep=1pt
     $\left( \begin{array}{cccccc}
   	1 &   &   & 1 &   &   \\
   	  & 1 &   &   & 1 &   \\
   	  &   & 1 &   &   & 1 
   \end{array}\right)$ 
   & \multicolumn{2}{c|}{ 
   \arraycolsep=1pt
     $\left( \begin{array}{cccccc}
   	1 &   &   & 1 & 1 & 1\\
   	  & 1 &   & 1 & 1 & 1\\
   	  &   & 1 & 1 & 1 & 1
   \end{array}\right)$}
   \\ \hline \hline
   Code $C$ 
   & $C_{3,1}$ & $C_{3,2}$ & $C_{3,3}$ & $C_2$ & $C_1$ 
   \\ \hline
 Generator matrix of $C$ 
   & \arraycolsep=1pt
     $\left( \begin{array}{ccc}
   	 0 & 0 & 0
   \end{array}\right)$
   & \arraycolsep=1pt
     $\left( \begin{array}{ccc}
   	 1  & 1  & 0 
   	   \end{array}\right)$  
   & \arraycolsep=1pt
     $\left( \begin{array}{ccc}
   	 1 & 1  &  1  
   \end{array}\right)$ 
   & \arraycolsep=1pt
     $\left( \begin{array}{cc}
   	1 & 1  
   	  \end{array}\right)$
    & \arraycolsep=1pt
     $\left( \begin{array}{c}
   	0 
   	  \end{array}\right)$
   \\ \hline
 \end{tabular}

\end{center}
\end{table}

\end{example}

\subsection{Lattices of type $E_6$ and $\Z/3\Z$-codes}\label{E6sub}
Let $(\Lambda,b)$ be a lattice of type $E_6$. Then $\Lambda$ has a basis $(e_1, \dots ,e_6)$ such that 
\[
	(b( e_{i}, e_{j} ))_{1\leq i,j \leq 6} = 
\left( 
\begin{array}{cccccc}
    2 &  -1 &     &     &    &    \\
   -1 &  2  &  -1 &     &    &    \\
      &  -1 &  2  & -1  &    & -1 \\
      &     &  -1 &  2  & -1 &    \\
      &     &     & -1  & 2  &    \\ 
      &     &  -1 &     &    & 2
  \end{array}
  \right)_{,}
\]where the blanks denote 0's.
We define 
\[
(f_1,\dots,f_6)=(e_1,\dots, e_6)
\left(
\begin{array}{cccccc}
      &     &  1  &     &    &    \\
    1 &     &     &     &  1 &  1 \\
    1 &   1 &     &     &  1 &  1 \\
      &     &     &  1  &  1 &    \\
      &     &     &     &  1 &    \\ 
      &     &     &     &    &  1
  \end{array}
  \right)_{,}
\]
and
\[
(f^*_1,\dots,f^*_6)=(f_1,\dots, f_6)
\frac{1}{3}
\left(
\begin{array}{cccccc}
    4 &   1 &   1 &   2 & -1 & -1 \\
    1 &   4 &   1 &   2 &  2 & -1 \\
    1 &   1 &   4 &   2 &  2 &  2 \\
    2 &   2 &   2 &   4 &  1 &  1 \\
   -1 &   2 &   2 &   1 &  4 &  1 \\ 
   -1 &  -1 &   2 &   1 &  1 &  4
  \end{array}
  \right)_{,}
\]where $(a_i)_{1\leq i \leq n}(b_{ij})_{1\leq i,j \leq n}
=(\sum_{k=1}^{n}a_ib_{ki})_{1\leq i \leq n}$.
We have
\[
b(f_i,f_j^*)=\delta_{ij} \hspace{10pt} (\mathrm{Kronecker~delta}).
\]
Hence,
$(f_1, \dots, f_6)$ is a basis of $\Lambda$, and $(f_1^*, \dots, f_6^*)$ is a basis of $\Lambda^*$. 
Moreover, 
\[
	(b( f^*_{i}, f^*_{j} ))_{1\leq i,j \leq 6} = 
\frac{1}{3}
\left( 
\begin{array}{cccccc}
    4 &   1 &   1 &   2 & -1 & -1 \\
    1 &   4 &   1 &   2 &  2 & -1 \\
    1 &   1 &   4 &   2 &  2 &  2 \\
    2 &   2 &   2 &   4 &  1 &  1 \\
   -1 &   2 &   2 &   1 &  4 &  1 \\ 
   -1 &  -1 &   2 &   1 &  1 &  4
  \end{array}
  \right)_{.}
\]

We consider the mapping
\[
\begin{array}{rccc}
\rho^{\oplus m} \colon & (\Lambda^*)^{\oplus m}            &\longrightarrow  & (\Z /3\Z)^{\oplus m}        \\
            & \rotatebox{90}{$\in$}&                 & \rotatebox{90}{$\in$} \\
            &\left(\displaystyle \sum_{j=1}^{n} x_{ij} f^*_j\right)_{1\leq i \leq m}  & \longmapsto &\left(\left( \displaystyle \sum_{j=1}^{3} x_{ij}-\sum_{j=4}^{6} x_{ij} \right) \mathrm{mod}~3 \right)_{1\leq i \leq m ~.} 
\end{array}
\]
This is an additive homomorphism. Additionally, the kernel of this homomorphism is $\Lambda^{\oplus m}$.

Let $C$ be a $\Z/3\Z$-code of length $m$. Then define 
\[
\Gamma_C:= (\rho^{\oplus m})^{-1}(C) \subset (\Lambda^*)^{\oplus m}.
\]

Let $x,y \in (\Lambda^*)^{\oplus m}, ~x=(x_1,\dots ,x_m), ~y=(y_1,\dots ,y_m)$ with $x_i,y_i \in \Lambda^*$ for $i=1,\dots ,m$. We define a symmetric bilinear form on $(\Lambda^*)^{\oplus m}$ by
\[
B(x,y)=\sum_{i=1}^{m}b(x_i,y_i).
\]
Then the pair $(\Gamma_C, B)$ is a lattice of rank $6m$.

We give some properties of the bilinear form $B$ and the lattice $\Gamma_C$.
\begin{lemma}\label{E6lem1}
	Let $x,y \in (\Lambda^*)^{\oplus m}$. Then
	\[
	B(x,y)\in \Z \Leftrightarrow \rho^{\oplus m}(x)\cdot \rho^{\oplus m}(y)=0, 
	\]
	\begin{eqnarray*}
B(x,x) \in 2\Z 
		\Leftrightarrow 
\rho^{\oplus m} (x)\cdot \rho^{\oplus m} (x)=0.
	\end{eqnarray*}
\end{lemma}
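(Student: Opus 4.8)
The plan is to mirror the proofs of \cref{Anlem1} and \cref{Dnlem1}: expand $x_i$ and $y_i$ in the dual basis $(f_1^*,\dots,f_6^*)$, evaluate the form through the Gram matrix $(b(f_i^*,f_j^*)) = \tfrac13 M$ already recorded above (where $M$ is the displayed integer matrix with constant diagonal $4$), and then read off the two equivalences by reducing the resulting integer combinations modulo $3$ and modulo $2$. Concretely, writing $x_i = \sum_{j=1}^6 x_{ij} f_j^*$ and $y_i = \sum_{j=1}^6 y_{ij} f_j^*$ with $x_{ij}, y_{ij} \in \Z$, one has $b(x_i,y_i) = \tfrac13 \sum_{j,l} M_{jl}\, x_{ij} y_{il}$, so that $B(x,y) = \tfrac13 \sum_{i}\sum_{j,l} M_{jl}\, x_{ij} y_{il}$, and everything reduces to understanding $\sum_{j,l} M_{jl}\, x_{ij} y_{il}$ modulo $3$ (for integrality) and $\sum_{j,l} M_{jl}\, x_{ij} x_{il}$ modulo $6$ (for evenness).

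The crux is the following mod-$3$ structure of $M$, which I would verify by inspection: with $v := (1,1,1,-1,-1,-1)$ one has $M_{jl} \equiv v_j v_l \pmod 3$ for all $j,l$ (the two diagonal $3\times 3$ blocks reduce to all $1$'s, the two off-diagonal blocks to all $-1$'s). This factorization is exactly what makes the double sum collapse onto the single linear functional defining $\rho$, since $\sum_{j=1}^6 v_j x_{ij} = \sum_{j=1}^3 x_{ij} - \sum_{j=4}^6 x_{ij}$ is a lift of the $i$-th coordinate of $\rho^{\oplus m}(x)$. Granting this, $\sum_{j,l} M_{jl}\, x_{ij} y_{il} \equiv \big(\sum_j v_j x_{ij}\big)\big(\sum_l v_l y_{il}\big) \pmod 3$, and summing over $i$ gives $B(x,y)\in\Z \Leftrightarrow \rho^{\oplus m}(x)\cdot\rho^{\oplus m}(y) = 0$ in $\Z/3\Z$, which is the first equivalence.

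For the second equivalence I would set $Q_i := \sum_{j,l} M_{jl}\, x_{ij} x_{il}$ and determine $\sum_i Q_i \bmod 6$, using that $B(x,x)\in 2\Z \Leftrightarrow \sum_i Q_i \in 6\Z$. Modulo $3$ the displayed factorization gives $Q_i \equiv \big(\sum_j v_j x_{ij}\big)^2 \pmod 3$. Modulo $2$, each $Q_i$ is automatically even: the diagonal contribution is $4\sum_j x_{ij}^2$ and every off-diagonal term carries a factor $2$, so $Q_i \in 2\Z$ for all $i$ and the parity obstruction disappears. By the Chinese Remainder Theorem, $\sum_i Q_i \in 6\Z$ is therefore equivalent to $\sum_i \big(\sum_j v_j x_{ij}\big)^2 \equiv 0 \pmod 3$, i.e.\ to $\rho^{\oplus m}(x)\cdot\rho^{\oplus m}(x) = 0$, as claimed.

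The only genuinely non-routine step is the verification $M_{jl}\equiv v_j v_l \pmod 3$; once this rank-one reduction is in place, both equivalences follow by the same bookkeeping as in the earlier lemmas, and the absence of an extra parity condition (in contrast to the $A_n$ case with odd $n$) is explained by the diagonal of $M$ being divisible by $4$, which forces each $Q_i$ to be even regardless of $x$.
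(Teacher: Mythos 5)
Your proof is correct and is essentially the paper's argument in cleaner packaging: the paper's explicit identities $b(x_i,y_i)\in\frac{1}{3}\bigl(\sum_{j=1}^{3}x_{ij}-\sum_{j=4}^{6}x_{ij}\bigr)\bigl(\sum_{j=1}^{3}y_{ij}-\sum_{j=4}^{6}y_{ij}\bigr)+\Z$ and $b(x_i,x_i)=\frac{4}{3}\bigl(\sum_{j=1}^{3}x_{ij}-\sum_{j=4}^{6}x_{ij}\bigr)^2+2(\cdots)$ are exactly your rank-one congruence $M\equiv vv^{T}\pmod 3$ together with the observation that the remaining terms are integers (respectively even integers, since the diagonal of $M$ is $4$ and off-diagonal terms pair up). Your mod-$3$ factorization, the parity of each $Q_i$, and the CRT step all check out, so the proposal is a valid proof by the same route.
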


\begin{proof}	
Let $x=(x_i)_{1\leq i \leq m} ,~y=(y_i)_{1\leq i \leq m} \in (\Lambda^*)^{\oplus m}$, 
	$x_i=\sum_{j=1}^{6}x_{ij}f_j^*,~y_i=\sum^{6}_{j=1} y_{ij}f_j^*$ with $x_{ij},~y_{ij}\in \Z$ for $i=1,\dots ,m$, $j=1,\dots ,6$.
Then 
\begin{eqnarray*}	
b(x_i,y_i)&=&\frac{1}{3}\left(\sum_{j=1}^{3}x_{ij}-\sum_{j=4}^{6}x_{ij}\right)\left(\sum_{j=1}^{3}y_{ij}-\sum_{j=4}^{6}y_{ij}\right)
+\sum_{j=1}^{6}x_{ij}y_{ij}\\
&+&x_{i1}y_{i4}+x_{i4}y_{i1}+x_{i2}y_{i4}+x_{i4}y_{i2}+x_{i2}y_{i5}+x_{i5}y_{i2}\\
&+&x_{i3}y_{i4}+x_{i4}y_{i3}+x_{i3}y_{i5}+x_{i5}y_{i3}+x_{i3}y_{i6}+x_{i6}y_{i3}.
\end{eqnarray*}
Thus,
\[
b(x_i,y_i)\in \frac{1}{3}\left(\sum_{j=1}^{3}x_{ij}-\sum_{j=4}^{6}x_{ij}\right)\left(\sum_{j=1}^{3}y_{ij}-\sum_{j=4}^{6}y_{ij}\right)+\Z.
\]
Hence,
\[
B(x,y)\in \Z \Leftrightarrow
\rho^{\oplus m}(x)\cdot \rho^{\oplus m}(y)=0.
\]
Moreover, 
\begin{eqnarray*}	
b(x_i,x_i)&=&\frac{4}{3}\left(\sum_{j=1}^{3}x_{ij}-\sum_{j=4}^{6}x_{ij}\right)^2\\
&+&2(x_{i1}x_{i2}+x_{i2}x_{i3}+x_{i3}x_{i1}+x_{i4}x_{i5}+x_{i5}x_{i6}+x_{i6}x_{i4}+x_{i1}x_{i5}+x_{i1}x_{i6}+x_{i2}x_{i6}).
\end{eqnarray*} 
Thus,
\[
B(x,x)\in 2\Z
\Leftrightarrow 
\rho^{\oplus m} (x)\cdot \rho^{\oplus m} (x)=0.
\]
\end{proof}

\begin{lemma}\label{E6lem2}
Let $C$ be a $\Z/3\Z$-code of length $m$. Then
\[
\Gamma_C^*=\Gamma_{C^{\perp}}.
\]	
\end{lemma}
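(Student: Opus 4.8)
The plan is to follow the proof of Lemma \ref{Anlem2} essentially verbatim, replacing the integrality criterion of Lemma \ref{Anlem1} by that of Lemma \ref{E6lem1}. Once one has in hand (i) the equivalence $B(x,y)\in\Z \Leftrightarrow \rho^{\oplus m}(x)\cdot\rho^{\oplus m}(y)=0$ and (ii) the surjectivity of $\rho^{\oplus m}$, the two set-theoretic inclusions defining $\Gamma_C^*=\Gamma_{C^\perp}$ follow formally.

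A preliminary observation legitimizes the whole argument: since $0\in C$, we have $\Lambda^{\oplus m}=(\rho^{\oplus m})^{-1}(0)\subseteq(\rho^{\oplus m})^{-1}(C)=\Gamma_C$, and $\Gamma_C\subseteq(\Lambda^*)^{\oplus m}$ by construction. Dualizing the chain $\Lambda^{\oplus m}\subseteq\Gamma_C\subseteq(\Lambda^*)^{\oplus m}$ with respect to the orthogonal sum $B$, and using $(\Lambda^{\oplus m})^*=(\Lambda^*)^{\oplus m}$, yields $\Lambda^{\oplus m}\subseteq\Gamma_C^*\subseteq(\Lambda^*)^{\oplus m}$. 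In particular every element of $\Gamma_C^*$ lies in $(\Lambda^*)^{\oplus m}$, so Lemma \ref{E6lem1} may be applied to it.

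For $\Gamma_{C^\perp}\subseteq\Gamma_C^*$, I would take $x\in\Gamma_{C^\perp}$ and $y\in\Gamma_C$; then $\rho^{\oplus m}(x)\in C^\perp$ and $\rho^{\oplus m}(y)\in C$ force $\rho^{\oplus m}(x)\cdot\rho^{\oplus m}(y)=0$, whence $B(x,y)\in\Z$ by Lemma \ref{E6lem1}, so $x\in\Gamma_C^*$. Conversely, for $x\in\Gamma_C^*$ one has $B(x,y)\in\Z$, hence $\rho^{\oplus m}(x)\cdot\rho^{\oplus m}(y)=0$, for every $y\in\Gamma_C$; since $\rho^{\oplus m}(\Gamma_C)=C$ (the restriction of the surjection $\rho^{\oplus m}$ to the preimage of $C$ hits all of $C$), this means $\rho^{\oplus m}(x)\cdot c=0$ for every $c\in C$, i.e. $\rho^{\oplus m}(x)\in C^\perp$, so $x\in(\rho^{\oplus m})^{-1}(C^\perp)=\Gamma_{C^\perp}$.

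I do not expect any genuine obstacle: the entire content is carried by Lemma \ref{E6lem1}, and the only step requiring a moment's care is the preliminary confinement $\Gamma_C^*\subseteq(\Lambda^*)^{\oplus m}$, which is what makes it valid to feed an arbitrary element of the dual into Lemma \ref{E6lem1}. This is precisely why the paper can dispose of the parallel statements (Lemmas \ref{Dnlem2}, \ref{Dn2lem3}, \ref{Dn3lem3}, \ref{Dn4lem3}) with the remark that they go ``in the same way as Lemma \ref{Anlem2}.''
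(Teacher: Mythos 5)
Your proof is correct and follows essentially the same route as the paper, which proves this lemma simply by invoking the argument of Lemma \ref{Anlem2} with Lemma \ref{E6lem1} supplying the integrality criterion. Your preliminary observation that $\Gamma_C^*\subseteq(\Lambda^{\oplus m})^*=(\Lambda^*)^{\oplus m}$ makes explicit a containment the paper leaves implicit (it is needed before Lemma \ref{E6lem1} can be applied to elements of the dual), but this is a clarification rather than a different method.
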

\begin{proof}
	It can be proven in the same way as Lemma \ref{Anlem2}.
\end{proof}
Therefore, we have the following theorem.
\begin{theorem}\label{E6the}
	Let $C$ be a $\Z/3\Z$-code of length $m$.
\begin{enumerate}
\item
$\Gamma_{C}$ is integral if and only if $C\subset C^{\perp}$.
\item
$\Gamma_{C}$ is unimodular if and only if $C$ is Euclidean self-dual. 
\item
$\Gamma_{C}$ is even if and only if $C\subset C^{\perp}$.
\item
$\Gamma_{C}$ is even unimodular if and only if $C$ is Euclidean self-dual.
\end{enumerate}	
\end{theorem}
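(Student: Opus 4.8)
The plan is to derive all four equivalences formally from Lemma \ref{E6lem1} and Lemma \ref{E6lem2}, using only one extra algebraic observation special to $\Z/3\Z$. First I would set up the order-preserving dictionary between codes and lattices. Since $\rho^{\oplus m}$ is an additive surjection with kernel $\Lambda^{\oplus m}$ (the latter being contained in every $\Gamma_D$), taking preimages gives, for any codes $C,D$, that $\Gamma_C\subset\Gamma_D$ if and only if $C\subset D$, and hence $\Gamma_C=\Gamma_D$ if and only if $C=D$. I would also note that $\rho^{\oplus m}$ restricts to a surjection $\Gamma_C\to C$, so quantifying over $x\in\Gamma_C$ is the same as quantifying over $\rho^{\oplus m}(x)\in C$.

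Parts (1) and (2) then follow immediately. For (1), recall that $\Gamma_C$ is integral precisely when $\Gamma_C\subset\Gamma_C^*$; by Lemma \ref{E6lem2} this reads $\Gamma_C\subset\Gamma_{C^\perp}$, which by the dictionary is $C\subset C^\perp$. For (2), $\Gamma_C$ is unimodular means $\Gamma_C=\Gamma_C^*=\Gamma_{C^\perp}$, i.e.\ $C=C^\perp$, which is Euclidean self-duality.

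For (3) I would use the norm statement of Lemma \ref{E6lem1}: $\Gamma_C$ is even if and only if $B(x,x)\in2\Z$ for all $x\in\Gamma_C$, which is equivalent to $\rho^{\oplus m}(x)\cdot\rho^{\oplus m}(x)=0$ for all $x\in\Gamma_C$, i.e.\ $u\cdot u=0$ for every $u\in C$. The single point requiring care is that over $\Z/3\Z$ this self-orthogonality of individual codewords is equivalent to $C\subset C^\perp$. One direction is trivial; for the converse I would polarize, using that $u+v\in C$ for $u,v\in C$:
\[
0=(u+v)\cdot(u+v)=u\cdot u+2\,u\cdot v+v\cdot v=2\,u\cdot v,
\]
and since $2$ is a unit modulo $3$ this forces $u\cdot v=0$, so $C\subset C^\perp$.

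Finally (4) is the conjunction of (2) and (3): even unimodularity requires both $C=C^\perp$ and $C\subset C^\perp$, and the former implies the latter, so the condition collapses to $C$ being Euclidean self-dual. The proof has no genuinely hard step; the only substantive ingredient is the polarization argument in (3), which relies on $2$ being invertible in $\Z/3\Z$. This invertibility is exactly what makes the $E_6$ case parallel the even-$A_n$ case and dispenses with any separate Type II requirement.
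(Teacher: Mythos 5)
Your proposal is correct and follows essentially the same route as the paper, which states Theorem \ref{E6the} as an immediate consequence of Lemma \ref{E6lem1} (the conditions $B(x,y)\in\Z$ and $B(x,x)\in 2\Z$ translated into $\rho^{\oplus m}(x)\cdot\rho^{\oplus m}(y)=0$) and Lemma \ref{E6lem2} ($\Gamma_C^*=\Gamma_{C^\perp}$), exactly as you derive it. Your only addition is to spell out the polarization argument showing that $u\cdot u=0$ for all $u\in C$ implies $C\subset C^\perp$ over $\Z/3\Z$ (using that $2$ is a unit), a step the paper leaves implicit; this is a welcome clarification, not a deviation.
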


\begin{example}
	$\mathscr{E}_4$ is the only Euclidean self-dual code of length $4$ over $\Z/3\Z$ (see \cite{47f3261c-228b-3df7-a34f-9836a7a9b34a}). The sublattice of $\Gamma_{\mathscr{E}_4}$ is $4E_6$.
\end{example}

\subsection{Lattices of type $E_7$ and $\Z/2\Z-codes$}
Let $(\Lambda,b)$ be a lattice of type $E_7$. Then $\Lambda$ has a basis $(e_1, \dots ,e_7)$ such that 
\[
	(b( e_{i}, e_{j} ))_{1\leq i,j \leq 7} = 
\left( 
\begin{array}{ccccccc}
    2 &  -1 &     &     &    &    &    \\
   -1 &  2  &  -1 &     &    &    &    \\
      &  -1 &  2  & -1  &    &    &    \\
      &     &  -1 &  2  & -1 &    & -1 \\
      &     &     & -1  & 2  & -1 &    \\ 
      &     &     &     & -1 & 2  &    \\
      &     &     & -1  &    &    &  2 
\end{array}
  \right)_{.}
\]
We define 
\[
(f_1,\dots ,f_7)=(e_1,\dots ,e_7)
\left(
\begin{array}{ccccccc}
    1 &     &     &     &    &    &    \\
    1 &   1 &     &     &    &    &    \\
    1 &   1 &   1 &     &    &    &    \\
    1 &   1 &   1 &   1 &    &    &    \\
    1 &   1 &   1 &   1 &  1 &    & -1 \\ 
    1 &   1 &   1 &   1 &  1 &  1 & -2 \\
      &     &     &     &    &    &  1 
  \end{array}
  \right)_{,}
\]
and
\[
(f^*_1,\dots,f^*_7)=(f_1,\dots, f_7)
\frac{1}{2}
\left(
\begin{array}{ccccccc}
    3 &   1 &   1 &   1 &  1 &  1 & 3 \\
    1 &   3 &   1 &   1 &  1 &  1 & 3 \\
    1 &   1 &   3 &   1 &  1 &  1 & 3 \\
    1 &   1 &   1 &   3 &  1 &  1 & 3 \\
    1 &   1 &   1 &   1 &  3 &  1 & 3 \\ 
    1 &   1 &   1 &   1 &  1 &  3 & 3 \\
    3 &   3 &   3 &   3 &  3 &  3 & 7 
  \end{array}
  \right)_{.}
\]
We have
\[
b(f_i,f_j^*)=\delta_{ij} \hspace{10pt} (\mathrm{Kronecker~delta}).
\]
Hence,
$(f_1, \dots, f_7)$ is a basis of $\Lambda$, and $(f_1^*, \dots, f_7^*)$ is a basis of $\Lambda^*$. 
Moreover, 
\[
	(b( f^*_{i}, f^*_{j} ))_{1\leq i,j \leq 7} = 
\frac{1}{2}
\left( 
\begin{array}{ccccccc}
    3 &   1 &   1 &   1 &  1 &  1 & 3 \\
    1 &   3 &   1 &   1 &  1 &  1 & 3 \\
    1 &   1 &   3 &   1 &  1 &  1 & 3 \\
    1 &   1 &   1 &   3 &  1 &  1 & 3 \\
    1 &   1 &   1 &   1 &  3 &  1 & 3 \\ 
    1 &   1 &   1 &   1 &  1 &  3 & 3 \\
    3 &   3 &   3 &   3 &  3 &  3 & 7 
  \end{array}
  \right)_{.}
\]

We consider the mapping
\[
\begin{array}{rccc}
\rho^{\oplus m} \colon & (\Lambda^*)^{\oplus m}            &\longrightarrow  & (\Z /2\Z)^{\oplus m}        \\
            & \rotatebox{90}{$\in$}&                 & \rotatebox{90}{$\in$} \\
            &\left(\displaystyle \sum_{j=1}^{n} x_{ij} f^*_j\right)_{1\leq i \leq m}  & \longmapsto &\left(\left( \displaystyle \sum_{j=1}^{7} x_{ij}\right) \mathrm{mod}~2 \right)_{1\leq i \leq m ~.} 
\end{array}
\]
This is an additive homomorphism. Additionally, the kernel of this homomorphism is $\Lambda^{\oplus m}$.

Let $C$ be a $\Z/2\Z$-code of length $m$. Then define 
\[
\Gamma_C:= (\rho^{\oplus m})^{-1}(C) \subset (\Lambda^*)^{\oplus m}.
\]

Let $x,y \in (\Lambda^*)^{\oplus m}, ~x=(x_1,\dots ,x_m), ~y=(y_1,\dots ,y_m)$ with $x_i,y_i \in \Lambda^*$ for $i=1,\dots ,m$. We define a symmetric bilinear form on $(\Lambda^*)^{\oplus m}$ by
\[
B(x,y)=\sum_{i=1}^{m}b(x_i,y_i).
\]
Then the pair $(\Gamma_C, B)$ is a lattice of rank $7m$.

We give some properties of the bilinear form $B$ and the lattice $\Gamma_C$.
\begin{lemma}\label{E7lem1}
	Let $x,y \in (\Lambda^*)^{\oplus m}$. Then
	\[
	B(x,y)\in \Z \Leftrightarrow \rho^{\oplus m}(x)\cdot \rho^{\oplus m}(y)=0, 
	\]
	\begin{eqnarray*}
B(x,x) \in 2\Z 
		\Leftrightarrow 
\mathrm{wt_E}(\rho^{\oplus m}(x))\in 4\Z.
	\end{eqnarray*}
\end{lemma}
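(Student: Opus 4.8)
The plan is to reduce the statement to a single block-wise computation of $b$ on $\Lambda^*$ and then sum over the $m$ coordinates, following the template of the earlier root-lattice lemmas. Writing $x_i=\sum_{j=1}^{7}x_{ij}f_j^*$ and $y_i=\sum_{j=1}^{7}y_{ij}f_j^*$ with $x_{ij},y_{ij}\in\Z$, bilinearity gives $b(x_i,y_i)=\tfrac12\sum_{j,k=1}^{7}G_{jk}x_{ij}y_{ik}$, where $G=2\,(b(f_j^*,f_k^*))_{j,k}$ is the integral symmetric matrix obtained by clearing the factor $\tfrac12$ from the Gram matrix of $(f_1^*,\dots,f_7^*)$ displayed above. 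The structural observation I would record first is the decomposition $G=J+2M$, where $J$ is the $7\times7$ all-ones matrix and $M$ is the integral symmetric matrix with $M_{jj}=1$ for $1\le j\le 6$, $M_{77}=3$, $M_{j7}=M_{7j}=1$ for $1\le j\le 6$, and $M_{jk}=0$ otherwise. Since $\sum_{j,k}J_{jk}x_{ij}y_{ik}=\bigl(\sum_j x_{ij}\bigr)\bigl(\sum_k y_{ik}\bigr)$ and $M$ has integer entries, this yields, for every $i$,
\[
b(x_i,y_i)\in\tfrac12\Bigl(\textstyle\sum_{j=1}^{7}x_{ij}\Bigr)\Bigl(\textstyle\sum_{j=1}^{7}y_{ij}\Bigr)+\Z .
\]
This is the exact $E_7$-analogue of the identity used for $E_6$ in the proof of \cref{E6lem1}, with the prime $3$ there replaced by $2$.

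For the first equivalence I set $s_i=\sum_{j=1}^{7}x_{ij}$ and $t_i=\sum_{j=1}^{7}y_{ij}$, so that $\rho^{\oplus m}(x)_i\equiv s_i$ and $\rho^{\oplus m}(y)_i\equiv t_i\pmod 2$ by definition of $\rho^{\oplus m}$. Summing the displayed relation over $i$ gives $B(x,y)\in\tfrac12\sum_{i=1}^{m}s_it_i+\Z$, whence $B(x,y)\in\Z$ holds if and only if $\sum_{i}s_it_i$ is even, which is precisely the condition $\rho^{\oplus m}(x)\cdot\rho^{\oplus m}(y)=0$ in $\Z/2\Z$.

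The diagonal equivalence is where the real work lies, and I expect it to be the main obstacle, since it requires refining the analysis from modulo $2$ to modulo $4$. Putting $y=x$ gives $b(x_i,x_i)=\tfrac12 s_i^2+Q_i$ with $Q_i:=\sum_{j=1}^{6}x_{ij}^2+3x_{i7}^2+2\sum_{j=1}^{6}x_{ij}x_{i7}\in\Z$, so that $2B(x,x)=\sum_i s_i^2+2\sum_i Q_i$. I would then reduce each piece modulo $4$. For the first piece, $s_i^2\equiv(s_i\bmod2)\pmod4$, so $\sum_i s_i^2\equiv W\pmod4$, where $W:=\#\{i:s_i\ \text{odd}\}$ is the number of nonzero entries of $\rho^{\oplus m}(x)$, i.e.\ $W=\mathrm{wt_E}(\rho^{\oplus m}(x))$ for a binary vector. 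For the second piece I reduce $M$ modulo $2$ and use $a^2\equiv a\pmod2$ to obtain $Q_i\equiv\sum_{j=1}^{7}x_{ij}=s_i\pmod2$, hence $2\sum_i Q_i\equiv 2W\pmod4$. Combining the two gives $2B(x,x)\equiv 3W\pmod4$, and since $3$ is a unit modulo $4$,
\[
B(x,x)\in2\Z\iff 2B(x,x)\equiv0\pmod4\iff W\equiv0\pmod4\iff\mathrm{wt_E}(\rho^{\oplus m}(x))\in4\Z .
\]

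The delicate point I would handle most carefully is the parity of the integral quadratic part $Q_i$: it is genuinely nonzero modulo $2$ in general, in contrast to the manifestly even cross-term $-\sum_{j<l}2x_{ij}x_{il}$ arising in the $A_n$ computation of \cref{Anlem1}, where the integral contribution could simply be discarded. Here it must be tracked, and the computation $Q_i\equiv s_i\pmod2$ is exactly what guarantees that this integer contribution is even whenever the corresponding $\rho$-coordinate vanishes, so that it does not corrupt the divisibility-by-$4$ conclusion. Once the decomposition $G=J+2M$ and the two modular reductions are in hand, both equivalences follow by summing over the $m$ blocks, exactly as in the earlier root-lattice cases.
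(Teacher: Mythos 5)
Your proof is correct and follows essentially the same route as the paper: both expand $b$ in the dual basis $(f_1^*,\dots,f_7^*)$, split off the term $\tfrac12\bigl(\sum_j x_{ij}\bigr)\bigl(\sum_j y_{ij}\bigr)$ from an integral remainder to get the first equivalence, and reduce the evenness condition to the congruence $\sum_i s_i^2\equiv 0\pmod 4$ for odd-coordinate counting. The only difference is bookkeeping: where you track the parity of $Q_i$ via $Q_i\equiv s_i\pmod 2$ and compute $2B(x,x)\equiv 3W\pmod 4$, the paper absorbs that same parity into the leading coefficient by rewriting $b(x_i,x_i)=\tfrac32 s_i^2-2\sum_{j<k}x_{ij}x_{ik}+2\sum_j x_{ij}x_{i7}$, making the integral remainder manifestly even.
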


\begin{proof}	
Let $x=(x_i)_{1\leq i \leq m} ,~y=(y_i)_{1\leq i \leq m} \in (\Lambda^*)^{\oplus m}$, 
	$x_i=\sum_{j=1}^{7}x_{ij}f_j^*,~y_i=\sum^{7}_{j=1} y_{ij}f_j^*$ with $x_{ij},~y_{ij}\in \Z$ for $i=1,\dots ,m$, $j=1,\dots ,7$.
\begin{eqnarray*}
	b(x_i,y_i)=\frac{1}{2}\left(\sum_{j=1}^{7}x_{ij}\right)\left(\sum_{j=1}^{7}y_{ij}\right)+\sum_{j=1}^{7}x_{ij}y_{ij} 
	+\sum_{j=1}^{7}(x_{ij}y_{i7}+x_{i7}y_{ij}).
\end{eqnarray*}
Thus,
\[
B(x,y)\in \Z \Leftrightarrow \rho^{\oplus m}(x)\cdot \rho^{\oplus m}(y)=0. 
\]
Moreover,
\begin{eqnarray*}
	b(x_i,x_i)=\frac{3}{2}\left(\sum_{j=1}^{7}x_{ij}\right)^2-2\sum_{1\leq j<k\leq 7 }x_{ij}x_{ik} 
	+2\sum_{j=1}^{7}x_{ij}x_{i7}.
\end{eqnarray*}
Hence,
\[
B(x,x) \in 2\Z 
		\Leftrightarrow 
\mathrm{wt_E}(\rho^{\oplus m}(x))\in 4\Z.
\]
\end{proof}
\begin{lemma}\label{E7lem2}
Let $C$ be a $\Z/2\Z$-code of length $m$. Then
\[
\Gamma_C^*=\Gamma_{C^{\perp}}.
\]	
\end{lemma}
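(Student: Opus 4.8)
The plan is to follow verbatim the argument used for Lemma \ref{Anlem2}, with Lemma \ref{E7lem1} playing the role that Lemma \ref{Anlem1} played there. The assertion amounts to the two inclusions $\Gamma_{C^{\perp}} \subset \Gamma_C^*$ and $\Gamma_C^* \subset \Gamma_{C^{\perp}}$, and in each direction Lemma \ref{E7lem1} is exactly the device that converts the integrality condition $B(x,y)\in\Z$ into the vanishing $\rho^{\oplus m}(x)\cdot\rho^{\oplus m}(y)=0$.

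First I would prove $\Gamma_{C^{\perp}} \subset \Gamma_C^*$. Given $x \in \Gamma_{C^{\perp}}$ and $y \in \Gamma_C$, by definition $\rho^{\oplus m}(x) \in C^{\perp}$ and $\rho^{\oplus m}(y) \in C$, so $\rho^{\oplus m}(x)\cdot\rho^{\oplus m}(y)=0$; the first equivalence in Lemma \ref{E7lem1} then yields $B(x,y)\in\Z$, whence $x \in \Gamma_C^*$.

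For the reverse inclusion I would first record that $\Gamma_C^* \subset (\Lambda^*)^{\oplus m}$, which is what makes $\rho^{\oplus m}$ applicable to elements of the dual lattice. Since $\Lambda^{\oplus m} \subset \Gamma_C$ and $\Lambda$ is integral (so that $(\Lambda^*)^* = \Lambda$), dualizing the inclusion gives $\Gamma_C^* \subset (\Lambda^{\oplus m})^* = (\Lambda^*)^{\oplus m}$. Then, for any $x \in \Gamma_C^*$ and $y \in \Gamma_C$ one has $B(x,y)\in\Z$, so Lemma \ref{E7lem1} gives $\rho^{\oplus m}(x)\cdot\rho^{\oplus m}(y)=0$. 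As $y$ ranges over $\Gamma_C = (\rho^{\oplus m})^{-1}(C)$ and $\rho^{\oplus m}$ is surjective, the image $\rho^{\oplus m}(y)$ runs through all of $C$; hence $\rho^{\oplus m}(x) \in C^{\perp}$, i.e. $x \in \Gamma_{C^{\perp}}$.

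I do not anticipate a genuine obstacle, since the argument is purely formal once Lemma \ref{E7lem1} is available; this is precisely why the analogous Lemmas \ref{Dnlem2} and \ref{E6lem2} were dispatched by reference to Lemma \ref{Anlem2}. The only point meriting care is the observation that $\Gamma_C^*$ lands inside $(\Lambda^*)^{\oplus m}$, so that $\rho^{\oplus m}$ is defined on it; this is exactly where the integrality of $\Lambda$ (equivalently $\Lambda \subset \Lambda^*$) is used.
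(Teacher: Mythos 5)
Your proof is correct and is essentially the paper's own argument: the paper dispatches this lemma with the single remark that it can be proven in the same way as Lemma \ref{Anlem2}, which is exactly the two-inclusion argument you give, with Lemma \ref{E7lem1} substituting for Lemma \ref{Anlem1}. Your extra observation that $\Gamma_C^* \subset (\Lambda^*)^{\oplus m}$ (via $\Lambda^{\oplus m} \subset \Gamma_C$ and the integrality of $\Lambda$), so that $\rho^{\oplus m}$ is defined on the dual lattice, correctly makes explicit a point the paper leaves implicit.
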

\begin{proof}
	It can be proven in the same way as Lemma \ref{Anlem2}.
\end{proof}
Therefore, we have the following theorem.
\begin{theorem}\label{E7the}
	Let $C$ be a $\Z/2\Z$-code of length $m$.
\begin{enumerate}
\item
$\Gamma_{C}$ is integral if and only if $C\subset C^{\perp}$.
\item
$\Gamma_{C}$ is unimodular if and only if $C$ is Euclidean self-dual. 
\item
$\Gamma_{C}$ is even if and only if $\mathrm{wt_E}(x)\in 4\Z$ for every $x\in C$.
\item
$\Gamma_{C}$ is even unimodular if and only if $C$ is Type II.
\end{enumerate}	
\end{theorem}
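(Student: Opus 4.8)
The plan is to deduce all four equivalences formally from Lemma \ref{E7lem1} and Lemma \ref{E7lem2}, exactly as one does in the $A_n$ and $D_n$ cases; no new computation is required beyond what those lemmas already supply. Throughout I would use that $\rho^{\oplus m}$ is a surjective homomorphism with kernel $\Lambda^{\oplus m}$ and that $\Gamma_C=(\rho^{\oplus m})^{-1}(C)$, so in particular $\rho^{\oplus m}(\Gamma_C)=C$. Consequently, for any two codes $C,C'$ one has $\Gamma_C\subset\Gamma_{C'}$ if and only if $C\subset C'$, and $\Gamma_C=\Gamma_{C'}$ if and only if $C=C'$.

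First I would prove (1). By definition $\Gamma_C$ is integral if and only if $\Gamma_C\subset\Gamma_C^{*}$. By Lemma \ref{E7lem2} we have $\Gamma_C^{*}=\Gamma_{C^{\perp}}$, so $\Gamma_C$ is integral if and only if $\Gamma_C\subset\Gamma_{C^{\perp}}$, which by the surjectivity remark is equivalent to $C\subset C^{\perp}$. (Equivalently, one may argue directly: integrality means $B(x,y)\in\Z$ for all $x,y\in\Gamma_C$, and by the first equivalence of Lemma \ref{E7lem1} this says $\rho^{\oplus m}(x)\cdot\rho^{\oplus m}(y)=0$ for all such $x,y$, i.e.\ $c\cdot c'=0$ for all $c,c'\in C$.) For (2), $\Gamma_C$ is unimodular if and only if $\Gamma_C=\Gamma_C^{*}=\Gamma_{C^{\perp}}$, which by the same remark holds exactly when $C=C^{\perp}$, that is, when $C$ is Euclidean self-dual.

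Next I would prove (3): $\Gamma_C$ is even if and only if $B(x,x)\in2\Z$ for every $x\in\Gamma_C$. By the second equivalence of Lemma \ref{E7lem1} this is equivalent to $\mathrm{wt_E}(\rho^{\oplus m}(x))\in4\Z$ for every $x\in\Gamma_C$, and since $\rho^{\oplus m}(\Gamma_C)=C$ this is precisely $\mathrm{wt_E}(c)\in4\Z$ for every $c\in C$. Finally (4) is simply the conjunction of (2) and (3): $\Gamma_C$ is even unimodular if and only if $C$ is Euclidean self-dual and $\mathrm{wt_E}(c)\in4\Z$ for every $c\in C$. Recalling that a Euclidean self-dual code over $\Z/2\Z$ is by definition Type II exactly when $\mathrm{wt_E}(c)\in 2n\Z=4\Z$ for all $c$ (taking the modulus $n=2$), this is the same as saying $C$ is Type II.

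There is essentially no substantive obstacle here, since the real content has already been carried out in Lemma \ref{E7lem1} (the bilinear-form identity for the $E_7$ dual basis) and Lemma \ref{E7lem2}. The only point deserving care is the passage between containments of lattices and containments of codes, which rests on $\rho^{\oplus m}$ being surjective with kernel $\Lambda^{\oplus m}\subset\Gamma_{C'}$ for every code $C'$; once this is recorded, each of the four statements is a one-line deduction.
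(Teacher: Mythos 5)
Your proposal is correct and takes essentially the same route as the paper, which states Theorem \ref{E7the} as an immediate consequence of Lemma \ref{E7lem1} and Lemma \ref{E7lem2} (``Therefore, we have the following theorem''), exactly the formal deduction you carry out, including the observation that Type II over $\Z/2\Z$ means $\mathrm{wt}_{\mathrm{E}}(c)\in 2\cdot 2\,\Z=4\Z$ for all codewords. The one small point left implicit on both sides is that in your item (3) the integrality built into the definition of an even lattice is automatic, since $B(x,y)=\tfrac{1}{2}\bigl(B(x+y,x+y)-B(x,x)-B(y,y)\bigr)\in\Z$ once all diagonal values lie in $2\Z$.
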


\section{Application to Hilbert modular form}
In this section, we give some examples of application of lattices of Section \ref{Sect2} to Hilbert modular form. 
\subsection{$K$-lattice}
We start with some general remarks concerning lattices over integers of a totally real number field. In this subsection, we omit the proofs of propositions and theorems. For details, see e.g.\ \cite[Ch.\ 5]{Ebeling}.

Let $K$ be a totally real number field of finite degree $r=[K:\Q]$.
Let $\sigma_1,\dots ,\sigma_r$ be the different embeddings of $K$ into $\R$ with $\sigma_1=\mathrm{id}$. The group $\mathrm{SL}_2(\Z_K)$ is the group of all $2\times 2$-matrices
\[
\begin{pmatrix}
		\alpha ~~ \beta \\
		\gamma ~~ \delta
	\end{pmatrix}
\]with entries $\alpha,\beta,\gamma,\delta\in \Z_K$ and with determinant $\alpha \delta - \beta \gamma =1$. This group operates on $\HH^{\oplus r}$ by
\[
z\longmapsto \frac{\alpha z+\beta}{\gamma z+\delta},\hspace{10pt}
	z_l\longmapsto \frac{\sigma_l(\alpha) z_l+\sigma_l(\beta)}{\sigma_l(\gamma) z_l+\sigma_l(\delta)},\hspace{10pt} l=1,\dots ,r
\]
Let $x\in K$. Recall that
\[
N_{K/\Q}(x):=\prod_{l=1}^{r}\sigma_l(x)
\]is the norm of $x$, and 
\[
\Tr_{K/\Q}(x):=\sum_{l=1}^{r}\sigma_l(x)
\]is the trace of $x$.

For $z \in \HH^{\oplus r}$, $\gamma,\delta\in \Z_K$, we define
\[
N_{K/\Q}(\gamma z+\delta):=\prod_{l=1}^{r}(\sigma_l(\gamma)z_l+\sigma_l(\delta)).
\]
Let $\Gamma$ be a subgroup of $\mathrm{SL}_2(\Z_K)$.
\begin{definition}
	A holomorphic function $f:\HH^{\oplus r}\rightarrow \C$ is called Hilbert modular form of weight $m$ for $\Gamma$, if
\[
f\left( \frac{\alpha z+\beta}{\gamma z+\delta}\right)=N_{K/\Q}(\gamma z+\delta)^m \cdot f(z) \hspace{10pt} \mathrm{for~all}~\begin{pmatrix}
		\alpha ~~ \beta \\
		\gamma ~~ \delta
	\end{pmatrix}\in \Gamma.
\]
\end{definition}

The different ideal $\mathfrak{D}_{K} = \mathfrak{D}_{\Z_{K}/\Z}$ of $K$ is defined by the equality
\[
	\mathfrak{D}_{K}^{-1}
	= (\Z_{K}, \Tr)^{*}
	:= \left\{ x \in K \setmid \Tr(xy) \in \Z \ \text{for every} \ y \in \Z_{K} \right\}.
\]
Since the lattice $(\Z_{K}, \Tr)$ is integral,
we have $\Z_{K} \subset \mathfrak{D}_{K}^{-1}$.
Hence, $\mathfrak{D}_{K} \subset \Z_{K}$.

We consider a vector space $V$ over $K$ of finite dimension $n=\dim_K V$ provided with a totally positive definite scalar product "$\cdot$" (i.e., $\cdot ~:~ V\times V \rightarrow K$ is a bilinear symmetric mapping, and $\sigma_j(v\cdot v)>0$ for all $1\leq j\leq r$ and all $v\in V\backslash \{0\}$).

A $K$-lattice $\Gamma$ in $V$ is a finitely generated $\Z_K$-submodule $\Gamma$ of $V$ which contains a $K$-basis of $V$.

For a $K$-lattice $\Gamma$ in $V$ we define the dual lattice $\Gamma^*$ by
\[
\Gamma^*:=\{v\in V~|~\Tr(v\cdot \Gamma)\subset \Z\}.
\]
\begin{proposition}[{\cite[Proposition 5.6.]{Ebeling}}]\label{K-latprop1} 
~

	\begin{enumerate}
		\item A $K$-lattice in $V$ is a free $\Z$-module of rank $rn$.
		\item The set $\Gamma^*$ is a $K$-lattice in $V$.
	\end{enumerate}
\end{proposition}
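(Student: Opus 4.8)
The plan is to treat the two assertions separately, reducing both to elementary facts about full-rank $\Z$-lattices once the correct $\Q$-bilinear structure on $V$ has been identified. Throughout I regard $V$ as a $\Q$-vector space of dimension $rn$, using that $\Z_K$ is a free $\Z$-module of rank $r$ and that $\dim_K V = n$.

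For part (1), first I would fix a $K$-basis $v_1,\dots,v_n$ of $V$ contained in $\Gamma$, which exists by the definition of a $K$-lattice. The free $\Z_K$-module $M_0 := \bigoplus_{i=1}^n \Z_K v_i$ is then contained in $\Gamma$ and is a free $\Z$-module of rank $rn$ (indeed $M_0\otimes_\Z\Q = V$). Next, since $\Gamma$ is finitely generated over $\Z_K$, I would express each $\Z_K$-generator in terms of $v_1,\dots,v_n$ with coefficients in $K$ and clear denominators: there is a nonzero $d\in\Z$ with $d\Gamma \subseteq M_0$, so $\Gamma$ is squeezed as $M_0 \subseteq \Gamma \subseteq \tfrac{1}{d}M_0$ between two free $\Z$-modules of rank $rn$. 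A subgroup of a free abelian group of rank $rn$ is itself free of rank at most $rn$, while containing $M_0$ forces the rank to be at least $rn$; hence $\Gamma$ is free of $\Z$-rank exactly $rn$.

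For part (2), the key construction is the symmetric $\Q$-bilinear form on $V$ given by
\[
\langle u, v\rangle := \Tr_{K/\Q}(u\cdot v),
\]
and the crucial point, which I expect to be the main obstacle, is that this form is nondegenerate. This combines two nondegeneracies: the $K$-scalar product $u\cdot v$ is nondegenerate because it is totally positive definite, and the trace form $(a,b)\mapsto \Tr_{K/\Q}(ab)$ on $K$ is nondegenerate because $K/\Q$ is separable. I would argue that if $\langle u, v\rangle = 0$ for all $v\in V$, then for every $a\in K$ and every $v$ we have $\Tr_{K/\Q}\!\bigl(a\,(u\cdot v)\bigr) = \langle u, av\rangle = 0$, so nondegeneracy of the trace form gives $u\cdot v = 0$ for all $v$, whence $u=0$ by nondegeneracy of the scalar product.

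Granting nondegeneracy, I would first verify that $\Gamma^*$ is a $\Z_K$-submodule of $V$: for $v\in\Gamma^*$, $a\in\Z_K$ and $w\in\Gamma$ one has $(av)\cdot w = v\cdot(aw)$ with $aw\in\Gamma$, so $\Tr_{K/\Q}\!\bigl((av)\cdot w\bigr)\in\Z$ and $av\in\Gamma^*$. Then I would identify $\Gamma^*$ as the $\langle\cdot,\cdot\rangle$-dual of the full-rank $\Z$-lattice $\Gamma$: choosing a $\Z$-basis $u_1,\dots,u_{rn}$ of $\Gamma$, which is a $\Q$-basis of $V$ by part (1), nondegeneracy produces a dual basis $u_1^*,\dots,u_{rn}^*$ with $\langle u_i^*, u_j\rangle = \delta_{ij}$; writing $v=\sum_i a_i u_i^*$ with $a_i\in\Q$ gives $\langle v,u_j\rangle = a_j$, so $v\in\Gamma^*$ precisely when every $a_j\in\Z$. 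Hence $\Gamma^* = \bigoplus_i \Z u_i^*$ is free of $\Z$-rank $rn$, in particular finitely generated over $\Z_K$; and since it contains the $\Q$-basis $u_1^*,\dots,u_{rn}^*$ it spans $V$ over $K$, so it contains a $K$-basis of $V$. Together these properties show $\Gamma^*$ is a $K$-lattice, completing part (2).
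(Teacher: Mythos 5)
Your proof is correct and takes essentially the same route as the argument the paper defers to (this subsection omits proofs, citing \cite[Proposition 5.6]{Ebeling}): the squeeze $M_0 \subseteq \Gamma \subseteq \tfrac{1}{d}M_0$ between free $\Z$-modules of rank $rn$ for part (1), and the dual-basis construction for the $\Q$-bilinear form $\langle u,v\rangle = \Tr_{K/\Q}(u\cdot v)$ for part (2), are exactly the standard steps. One minor simplification: nondegeneracy of $\langle\cdot,\cdot\rangle$ is immediate from positive definiteness, since $\langle u,u\rangle = \sum_{j=1}^{r}\sigma_j(u\cdot u) > 0$ for $u\neq 0$ because the scalar product is totally positive definite, so your two-stage argument via separability of $K/\Q$ and nondegeneracy of the $K$-valued scalar product, while correct, is not needed.
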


Let $\Gamma$ be a $K$-lattice in $V$. Let $\{e_1,\dots ,e_{rn}\}$ be a $\Z$-basis of $\Gamma$. Then 
\[
\Delta(\Gamma):=\det(\Tr(e_i\cdot e_j))
\]is called the discriminant of $\Gamma$. It does not depend on the choice the basis $\{e_1,\dots ,e_{rn}\}$.

We now fix a $K$-lattice $\Gamma$ in $V$. For a given $v_0\in V$ we define the theta function 
\[
\theta_{v_0+\Gamma}(z_1,\dots,z_r):=\sum_{v\in v_0+\Gamma}\exp(\pi \sqrt{-1}\Tr(zv^2)),
\]where $z_1,\dots,z_r\in \HH$, and
\[
\Tr(zv^2):=\sum_{j=1}^{r}z_j\sigma_j(v\cdot v).
\]
\begin{proposition}[{\cite[Proposition 5.7.]{Ebeling}}]\label{K-latprop2} 
The series $\theta_{v_0+\Gamma}$ is absolutely uniformly convergent in each subset of $\HH^{\oplus r}$ of the form $\{(z_1,\dots,z_r)\in \HH^{\oplus r}~|~\mathrm{Im}(z_j)>y_0 ~\mathrm{for}~1\leq j\leq r\}~(y_0\in \R,~y_0>0)$. In particular $\theta_{v_0+\Gamma}$ is holomorphic in $\HH^{\oplus r}$.
\end{proposition}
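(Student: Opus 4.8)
The plan is to apply the Weierstrass M-test: I will bound the modulus of each summand by a quantity independent of $z$ on the given region, show that the resulting series of bounds converges, and thereby obtain absolute and uniform convergence simultaneously; holomorphy will then follow from the standard fact that a locally uniform limit of holomorphic functions is holomorphic.

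First I would compute the modulus of a general term. Writing $z_j = x_j + \sqrt{-1}\,y_j$ with $\mathrm{Im}(z_j) = y_j > y_0 > 0$, the total positive-definiteness of the scalar product (which gives $\sigma_j(v\cdot v) > 0$ for every $j$ and every $v \neq 0$) yields
\[
	\left\lvert \exp(\pi\sqrt{-1}\,\Tr(zv^2)) \right\rvert
	= \exp\left( -\pi \sum_{j=1}^{r} y_j\, \sigma_j(v\cdot v) \right)
	\leq \exp\left( -\pi y_0\, \Tr(v\cdot v) \right),
\]
where the inequality uses $y_j > y_0$ together with $\sigma_j(v\cdot v) \geq 0$, and the case $v = 0$ is trivial since both sides equal $1$. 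The decisive feature is that the majorant $M_v := \exp(-\pi y_0\,\Tr(v\cdot v))$ is independent of the real parts $x_j$, so it is a legitimate dominating term on the entire region $\{\mathrm{Im}(z_j) > y_0\}$.

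Next I would prove that $\sum_{v \in v_0 + \Gamma} M_v$ converges. The pairing $(v,w) \mapsto \Tr(v\cdot w)$ is a positive-definite symmetric bilinear form on $V \otimes_{\Q} \R \cong \R^{rn}$, and by \ref{K-latprop1} the $K$-lattice $\Gamma$ is a free $\Z$-module of full rank $rn$, hence a lattice in this Euclidean space. Fixing a $\Z$-basis of $\Gamma$ identifies $v_0 + \Gamma$ with a translate of $\Z^{rn}$, under which $\Tr(v\cdot v)$ becomes a positive-definite quadratic form in the integer coordinates $k \in \Z^{rn}$. Bounding this form below by $c\lVert k \rVert^{2}$ for a constant $c > 0$ (for all but finitely many lattice points, to absorb the shift by $v_0$), I dominate $\sum_v M_v$ by a finite sum plus a tail bounded by
\[
	\sum_{k \in \Z^{rn}} \exp(-\pi y_0 c'\lVert k \rVert^{2})
	= \left( \sum_{t \in \Z} \exp(-\pi y_0 c' t^{2}) \right)^{rn} < \infty.
\]
Thus $\sum_v M_v$ converges, and the M-test gives absolute and uniform convergence of $\theta_{v_0 + \Gamma}$ on each such region.

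Finally, each summand $z \mapsto \exp(\pi\sqrt{-1}\,\Tr(zv^2))$ is holomorphic on $\HH^{\oplus r}$, so the uniform limit is holomorphic on every region $\{\mathrm{Im}(z_j) > y_0\}$; since each point of $\HH^{\oplus r}$ lies in such a region once $y_0$ is chosen below the minimum of its imaginary parts, $\theta_{v_0 + \Gamma}$ is holomorphic on all of $\HH^{\oplus r}$. The only mildly delicate step is the lattice-sum estimate, but it becomes routine once positive-definiteness is used to pass to the Euclidean norm.
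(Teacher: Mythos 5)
Your proof is correct, and since the paper omits this proof entirely (deferring to Ebeling, Proposition 5.7), there is nothing to contrast: your argument---the Weierstrass M-test with the $z$-independent majorant $\exp(-\pi y_0 \Tr(v\cdot v))$, justified by total positive-definiteness, followed by domination of the lattice sum by a product of one-dimensional Gaussian series via a lower bound $c\lVert k\rVert^2$ on the positive-definite trace form on $V\otimes_{\Q}\R\cong\R^{rn}$---is precisely the standard proof given in the cited source. The handling of the translate $v_0+\Gamma$ (absorbing the shift into finitely many terms) and the passage from uniform convergence on the regions $\mathrm{Im}(z_j)>y_0$ to holomorphy on all of $\HH^{\oplus r}$ are both handled correctly.
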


From now on we make the general additional assumption that $\Gamma$ is integral (i.e., $\Tr(v\cdot w)\in \Z$ for all $v,w\in \Gamma$) and even (i.e., $\Tr(v\cdot v)\in 2\Z$ for all $v\in \Gamma$). Then $\Gamma\subset \Gamma^*$ and $\Delta(\Gamma)=[\Gamma^*:\Gamma]$. Also, for $v,w\in \Gamma^*$ the expressions $\exp(\pi \sqrt{-1}\Tr(v\cdot v))$ and $\exp(2\pi \sqrt{-1}\Tr(v\cdot w))$ only depend on $v$ and $w$ modulo $\Gamma$. We also assume from now on that $n=\dim_KV$ is even. Let $k=n/2$.
We now consider the operation of $\mathrm{SL}_2(\Z_K)$ on $\HH^{\oplus r}$.
For a function $f:~\HH^{\oplus r}\rightarrow \C,~k=n/2$, and 
\[
A=\begin{pmatrix}
		\alpha ~~ \beta \\
		\gamma ~~ \delta
	\end{pmatrix}
	\in \mathrm{SL}_2(\Z_K)
\]we set
\[
(f|_kA)(z_1,\dots,z_r):=\prod_{j=1}^{r}(\sigma_j(\gamma)z_j+\sigma_j(\delta))^{-k}f(\sigma_1(A)z_1,\dots,\sigma_r(A)z_r).
\]This defines an operation of $\mathrm{SL}_2(\Z_K)$ on the set of functions $f:~\HH^{\oplus r}\rightarrow \C.$

Let 
\[
\mathfrak{L}:=\left\{x\in \Z_K~\middle|~\Tr\left(x\Z_K \frac{v\cdot v}{2}\right)\subset \Z ~\mathrm{for~all}~v\in \Gamma^*
\right\}_{.}
\]We call $\mathfrak{L}$ the level of $\Gamma$.
\begin{proposition}[{\cite[Proposition 5.9.]{Ebeling}}]\label{K-latprop3}
	\begin{enumerate}
		\item $\mathfrak{L}$ is an ideal of $\Z_K$.
		\item $\mathfrak{L}=\Z_K$ if and only if $\Gamma=\Gamma^*$.
	\end{enumerate}
\end{proposition}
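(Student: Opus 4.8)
The plan is to reduce the defining condition of $\mathfrak{L}$ to a statement about the inverse different and then read off both parts. Recall that $\mathfrak{D}_{K}^{-1}=\{z\in K\mid \Tr(z\,\Z_{K})\subseteq \Z\}$, so for $x\in \Z_{K}$ and $v\in \Gamma^{*}$ the condition $\Tr\!\left(x\,\Z_{K}\,\tfrac{v\cdot v}{2}\right)\subseteq \Z$ is exactly $x\,\tfrac{v\cdot v}{2}\in \mathfrak{D}_{K}^{-1}$. Hence
\[
 \mathfrak{L}=\left\{x\in \Z_{K}\ \middle|\ x\,\tfrac{v\cdot v}{2}\in \mathfrak{D}_{K}^{-1}\ \text{for all}\ v\in \Gamma^{*}\right\}.
\]
For (1), $\mathfrak{L}$ is closed under addition and negation because $\mathfrak{D}_{K}^{-1}$ is a $\Z$-module and $x\mapsto x\,\tfrac{v\cdot v}{2}$ is additive; and if $x\in \mathfrak{L}$, $a\in \Z_{K}$, then $ax\,\tfrac{v\cdot v}{2}=a\bigl(x\,\tfrac{v\cdot v}{2}\bigr)\in \mathfrak{D}_{K}^{-1}$, since $\mathfrak{D}_{K}^{-1}$ is a $\Z_{K}$-module, so $ax\in \mathfrak{L}$. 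Thus $\mathfrak{L}$ is an ideal of $\Z_{K}$.

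For (2), as $\mathfrak{L}$ is an ideal we have $\mathfrak{L}=\Z_{K}$ if and only if $1\in \mathfrak{L}$, that is, if and only if $\tfrac{v\cdot v}{2}\in \mathfrak{D}_{K}^{-1}$ for every $v\in \Gamma^{*}$; call this condition $(\ast)$. The cleaner half is $(\ast)\Rightarrow \Gamma=\Gamma^{*}$, which I would prove by polarization: for $v,w\in \Gamma^{*}$ we have $v+w\in \Gamma^{*}$ and
\[
 \tfrac{(v+w)\cdot(v+w)}{2}=\tfrac{v\cdot v}{2}+v\cdot w+\tfrac{w\cdot w}{2},
\]
so $(\ast)$ forces $v\cdot w\in \mathfrak{D}_{K}^{-1}$, and in particular $\Tr(v\cdot w)\in \Z$. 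Writing $b_{\Z}(x,y):=\Tr(x\cdot y)$, this says $\Gamma^{*}$ is integral, i.e. $\Gamma^{*}\subseteq(\Gamma^{*})^{*}$; since $\Gamma$ and $\Gamma^{*}$ are full $\Z$-lattices of rank $rn$ (\cref{K-latprop1}) and $\Gamma^{*}$ is the $\Z$-dual of $\Gamma$ with respect to the nondegenerate form $b_{\Z}$, reflexivity gives $(\Gamma^{*})^{*}=\Gamma$. Hence $\Gamma^{*}\subseteq \Gamma$, and together with the standing inclusion $\Gamma\subseteq \Gamma^{*}$ we conclude $\Gamma=\Gamma^{*}$.

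The reverse implication $\Gamma=\Gamma^{*}\Rightarrow(\ast)$ is where I expect the main difficulty. Now $\Gamma^{*}=\Gamma$, so $(\ast)$ becomes the integral refinement $\tfrac{v\cdot v}{2}\in \mathfrak{D}_{K}^{-1}$ for all $v\in \Gamma$ of the standing evenness hypothesis $\Tr(v\cdot v)\in 2\Z$. I would argue prime by prime. At a prime $\mathfrak{p}\nmid 2$ it is automatic: since $v\in\Gamma=\Gamma^{*}$ and $\Gamma^{*}=\{u\mid u\cdot\Gamma\subseteq\mathfrak{D}_{K}^{-1}\}$ (by the $\Z_{K}$-module structure of $\Gamma$) we get $v\cdot v\in v\cdot\Gamma\subseteq \mathfrak{D}_{K}^{-1}$, and $2$ is a unit locally, so $\tfrac{v\cdot v}{2}\in(\mathfrak{D}_{K}^{-1})_{\mathfrak{p}}$. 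The obstacle is concentrated at the primes $\mathfrak{p}\mid 2$: there $\Tr(v\cdot v)\in 2\Z$ by itself is genuinely weaker than the refinement, so one must bring in self-duality of the localization $\Gamma_{\mathfrak{p}}$ and analyse even self-dual lattices over the $2$-adic ring $\Z_{K,\mathfrak{p}}$ in order to promote trace-evenness to $\tfrac{v\cdot v}{2}\in \mathfrak{D}_{K,\mathfrak{p}}^{-1}$; equivalently, if one reads $\tfrac{v\cdot v}{2}\in \mathfrak{D}_{K}^{-1}$ as the definition of evenness for $K$-lattices, this direction is immediate. Granting $(\ast)$, we obtain $1\in \mathfrak{L}$ and thus $\mathfrak{L}=\Z_{K}$, which closes the equivalence.
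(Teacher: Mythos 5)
Your part (1) is correct, and so is the forward half of (2): the polarization identity plus biduality $(\Gamma^{*})^{*}=\Gamma$ (legitimate here, since $\Gamma$ is a full rank-$rn$ $\Z$-lattice and the trace form is nondegenerate) is exactly the right argument. Note, however, that the paper itself proves nothing here --- Subsection 4.1 explicitly omits all proofs and cites \cite[Proposition 5.9]{Ebeling} --- so the only question is whether your argument stands on its own. It does not: the converse implication $\Gamma=\Gamma^{*}\Rightarrow(\ast)$, which you yourself flag as ``the main difficulty,'' is never actually proved. Saying that ``one must bring in self-duality of the localization and analyse even self-dual lattices over the $2$-adic ring'' is a promissory note, not an argument; and your fallback --- reading $\tfrac{v\cdot v}{2}\in\mathfrak{D}_{K}^{-1}$ as the \emph{definition} of even --- proves a different proposition, because the paper's standing evenness hypothesis is only $\Tr(v\cdot v)\in 2\Z$.

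The gap is worse than a missing computation: the local statement your sketch would rest on is \emph{false} at a prime $\mathfrak{p}\mid 2$ that ramifies in $K$. Take $K_{\mathfrak{p}}=\Q_2(\sqrt{2})$, $\mathcal{O}=\Z_2[\sqrt{2}]$, and $\gamma=(2+\sqrt{2})/4$, a generator of $\mathfrak{D}_{\mathfrak{p}}^{-1}$; for $c=a+b\sqrt{2}$ one computes $\Tr(\gamma c)=a+b$. The rank-two lattice $\mathcal{O}^{2}$ with Gram matrix $\mathrm{diag}\bigl(\gamma(1+\sqrt{2}),\,\gamma(1-\sqrt{2})\bigr)$ is self-dual for the trace form (the entries are $\gamma$ times units) and trace-even, since every value $v\cdot v$ is congruent to $0$ or $1+\sqrt{2}$ modulo $2\mathcal{O}$, so $a+b$ is always even; yet $\tfrac{e_1\cdot e_1}{2}=\gamma(1+\sqrt{2})/2\notin\mathfrak{D}_{\mathfrak{p}}^{-1}$, and indeed $\Tr\bigl(\sqrt{2}\,\tfrac{e_1\cdot e_1}{2}\bigr)=3/2$. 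So no purely local analysis at ramified $2$ can promote trace-evenness to $(\ast)$; any correct proof of the converse must use input beyond self-duality and evenness of the localizations. What you missed is the clean argument available when every prime above $2$ is \emph{unramified} in $K$: the map $y\mapsto\Tr(y\,v\cdot v)\bmod 2$ is a well-defined additive map on $\Z_K/2\Z_K$ (by integrality and the $\Z_K$-module property), it vanishes on squares because $\Tr(s^{2}v\cdot v)=\Tr((sv)\cdot(sv))\in 2\Z$, and squaring is surjective on $\Z_K/2\Z_K$ when $2$ is unramified (a product of finite fields, where Frobenius is bijective); hence it vanishes identically and $1\in\mathfrak{L}$. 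This covers Ebeling's intended setting $K=\Q(\zeta_p)^{+}$, $p$ odd, and the paper's \S 4.3; but in \S 4.2 the paper works over $K=\Q(\sqrt{2})$, where $2$ ramifies, and there the needed extra structure is the $\Z_F$-module structure, $F=\Q(\zeta_8)$: one has $\tfrac{v\cdot v}{2}=\sum_i v_i\overline{v_i}$, evenness applied to $\lambda v$ for $\lambda\in\Z_F$ gives $\Tr_{K/\Q}\bigl(N_{F/K}(\lambda)\tfrac{v\cdot v}{2}\bigr)\in\Z$, and the additive group generated by the norms $N_{F/K}(\lambda)$ contains $N_{F/K}(\lambda+\mu)-N_{F/K}(\lambda)-N_{F/K}(\mu)=\Tr_{F/K}(\lambda\bar{\mu})$, hence contains $1$ and $\Tr_{F/K}(\zeta_8)=\sqrt{2}$, hence all of $\Z_K=\Z[\sqrt{2}]$, which yields $(\ast)$ in that case. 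Without some such ramification hypothesis or extra module structure, your converse remains unestablished.
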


Since $\mathfrak{L}$ is an ideal of $\Z_K$, we can define subgroup $\Gamma(\mathfrak{L})$ of $\mathrm{SL}_2(\Z_K)$ as follow:
\[
\Gamma(\mathfrak{L}):=
\left\{
\begin{pmatrix}
	\alpha ~~ \beta \\
	\gamma ~~ \delta
\end{pmatrix}
\in \mathrm{SL}_2(\Z_K)
~\middle|~
\begin{array}{l}
	\alpha \equiv \delta \equiv 1~(\mathrm{mod}~ \mathfrak{L}),\\
	\gamma \equiv \beta \equiv 0~(\mathrm{mod}~ \mathfrak{L})
\end{array}
\right\}_{.}
\]
\begin{theorem}[{\cite[Theorem 5.8.]{Ebeling}}]\label{K-latthe} we have for $v\in \Gamma^*$
\[
\theta_{v+\Gamma}|_kA=\theta_{v+\Gamma} \hspace{10pt}
\mathrm{for}~A\in \Gamma(\mathfrak{L}).
\]
\end{theorem}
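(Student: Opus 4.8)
The plan is to use that $A \mapsto (f \mapsto f|_k A)$ is a right action of $\mathrm{SL}_2(\Z_K)$ on the holomorphic functions $\HH^{\oplus r} \to \C$: the factor $j(A,z) := \prod_{l=1}^{r}(\sigma_l(\gamma)z_l + \sigma_l(\delta))$ satisfies the cocycle relation $j(AB,z) = j(A,Bz)\,j(B,z)$ one place at a time, so $(f|_kA)|_kB = f|_k(AB)$ and $\{A : \theta_{v+\Gamma}|_k A = \theta_{v+\Gamma}\}$ is a subgroup. Hence it suffices to handle the elements of $\Gamma(\mathfrak{L})$ according to whether $\gamma = 0$ or $\gamma \neq 0$. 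When $\gamma = 0$ we factor $A = \begin{pmatrix}\alpha & 0 \\ 0 & \delta\end{pmatrix}\begin{pmatrix}1 & \alpha^{-1}\beta \\ 0 & 1\end{pmatrix}$ into a diagonal unit matrix and a translation; when $\gamma \neq 0$ the matrix is handled by a single application of Poisson summation. Everything thus reduces to a translation computation, a unit-scaling computation, and one Poisson inversion.

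For a translation $z \mapsto z + b$ with $b \in \mathfrak{L}$ the automorphy factor is trivial and
\[
\theta_{v+\Gamma}(z+b) = \sum_{w \in v+\Gamma} \exp\bigl(\pi\sqrt{-1}\,\Tr(z w^2)\bigr)\,\exp\bigl(\pi\sqrt{-1}\,\Tr_{K/\Q}(b\,(w\cdot w))\bigr).
\]
Since $v + \Gamma \subseteq \Gamma^{*}$ and $b \in \mathfrak{L}$, the defining property of the level gives $\Tr\!\bigl(b\,\tfrac{w\cdot w}{2}\bigr) \in \Z$, so $\Tr_{K/\Q}(b\,(w\cdot w)) \in 2\Z$ and every extra phase is $1$; thus $\mathfrak{L}$-translations fix $\theta_{v+\Gamma}$. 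For a diagonal unit $\varepsilon \equiv 1 \pmod{\mathfrak{L}}$ the action is $z_l \mapsto \sigma_l(\varepsilon)^2 z_l$, and from $\sigma_l(\varepsilon)^2\,\sigma_l(w\cdot w) = \sigma_l\bigl((\varepsilon w)\cdot(\varepsilon w)\bigr)$ together with $\varepsilon\Gamma = \Gamma$ one gets $\theta_{v+\Gamma}\bigl((\sigma_l(\varepsilon)^2 z_l)_l\bigr) = \theta_{\varepsilon v + \Gamma}(z)$. Finally $\varepsilon v + \Gamma = v + \Gamma$, because $(\varepsilon-1) \in \mathfrak{L}$ and $v \in \Gamma^{*}$ force $(\varepsilon-1)v \in \mathfrak{L}\,\Gamma^{*} \subseteq \Gamma$, the inclusion $\mathfrak{L}\,\Gamma^{*} \subseteq \Gamma$ following from the definition of $\mathfrak{L}$ after polarizing $w\cdot u$ into quadratic-form values $\tfrac{(\cdot)\cdot(\cdot)}{2}$ on $\Gamma^{*}$. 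Combined with the triviality of the surviving scalar $N(\delta)^{-k}$, this shows the diagonal unit matrices fix $\theta_{v+\Gamma}$.

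The crux is $\gamma \neq 0$, modelled on the inversion $z \mapsto -1/z$. Viewing $\Gamma$ as a rank-$rn$ $\Z$-lattice under the positive-definite pairing $\Tr(\,\cdot\,\cdot\,)$ with $\Delta(\Gamma) = [\Gamma^{*}:\Gamma]$ (\cref{K-latprop1}) and completing the square in the Gaussian $\exp(\pi\sqrt{-1}\,\Tr((Az)w^2))$, one application of Poisson summation in all $r$ archimedean places yields
\[
\theta_{v+\Gamma}(-1/z) = \frac{1}{\sqrt{\Delta(\Gamma)}}\prod_{l=1}^{r}\Bigl(\frac{z_l}{\sqrt{-1}}\Bigr)^{k}\sum_{u \in \Gamma^{*}/\Gamma} \exp\bigl(2\pi\sqrt{-1}\,\Tr(v\cdot u)\bigr)\,\theta_{u+\Gamma}(z),
\]
where $\exp(2\pi\sqrt{-1}\,\Tr(v\cdot u))$ is well defined on $\Gamma^{*}/\Gamma$ because $v \in \Gamma^{*}$. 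The same computation for a general $A$ with $\gamma \neq 0$ produces a master transformation formula $\theta_{v+\Gamma}(Az) = j(A,z)^{k}\,\Delta(\Gamma)^{-1/2}\sum_{u \in \Gamma^{*}/\Gamma} G_A(v,u)\,\theta_{u+\Gamma}(z)$, whose coefficients $G_A(v,u)$ are Gauss sums over $\Gamma^{*}/\Gamma$ assembled from $\alpha,\beta,\gamma,\delta$ and the values of the quadratic form.

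It then remains to impose $A \in \Gamma(\mathfrak{L})$, i.e.\ $\alpha \equiv \delta \equiv 1$ and $\beta \equiv \gamma \equiv 0 \pmod{\mathfrak{L}}$, and to check that these congruences, read through the definition of $\mathfrak{L}$, give $G_A(v,u) = 0$ for $u \not\equiv v$ and $G_A(v,v) = \sqrt{\Delta(\Gamma)}$, so the sum collapses to the single term $\theta_{v+\Gamma}$ and the leftover scalar reassembles with $\prod_l(z_l/\sqrt{-1})^{k}$ into exactly $j(A,z)^{k}$. I expect this final bookkeeping to be the main obstacle: establishing the Poisson inversion formula with the correct normalization $\Delta(\Gamma)^{-1/2}$ and a consistent branch of the half-integral power $(z_l/\sqrt{-1})^{k}$ at all $r$ places simultaneously, verifying that the level congruences annihilate every off-diagonal Gauss sum, and confirming that the weight-$k$ automorphy factor — including the unit contribution $N(\delta)^{-k}$ from the diagonal case — comes out trivial.
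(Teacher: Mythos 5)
The paper offers no proof of \cref{K-latthe} to compare against: the subsection states explicitly that proofs are omitted, deferring to \cite[Ch.~5]{Ebeling}, and the argument there (in the van der Geer--Hirzebruch style) consists almost entirely of the computation you postpone. Your $\gamma=0$ analysis is essentially complete and correct: the factorization into a diagonal unit matrix and a translation, the translation case via the defining property of $\mathfrak{L}$ (giving $\Tr_{K/\Q}(b\,(w\cdot w))\in 2\Z$ for $b\in\mathfrak{L}$, $w\in\Gamma^*$), and the identity $\varepsilon v+\Gamma=v+\Gamma$ via the polarization proof of $\mathfrak{L}\,\Gamma^*\subseteq\Gamma$ (which tacitly uses $\Gamma^{**}=\Gamma$, true here but worth stating). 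The genuine gap is that the entire case $\gamma\neq 0$ --- which is the substance of the theorem, since the inversion $z\mapsto -1/z$ does not lie in $\Gamma(\mathfrak{L})$ unless $\mathfrak{L}=\Z_K$ --- is only announced. You posit a master formula with coefficients $G_A(v,u)$ and then say it ``remains to check'' that the congruences kill the off-diagonal terms and give $G_A(v,v)=\sqrt{\Delta(\Gamma)}$; that check is not bookkeeping, it is the theorem. Even writing down $G_A(v,u)$ requires first splitting $v+\Gamma$ into residue classes modulo $\gamma\Gamma$ before completing the square, so the sums run over $\Gamma/\gamma\Gamma$, a group of order $\lvert N_{K/\Q}(\gamma)\rvert^{n}$, and their evaluation --- including the eighth-root-of-unity (Gauss--Milgram) sign and one coherent branch of the $r$ half-integral powers simultaneously --- is exactly where the hypothesis $A\in\Gamma(\mathfrak{L})$ must be spent. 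As written, the proposal is a correct plan for the easy part and a to-do list for the rest.

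Moreover, one item you defer is not a formality: the triviality of the scalar $N_{K/\Q}(\delta)^{-k}$ in the diagonal case. The congruence $\delta\equiv 1\pmod{\mathfrak{L}}$ does \emph{not} force $N_{K/\Q}(\delta)=1$. For $K=\Q(\sqrt{2})$ and $\mathfrak{L}=(\sqrt{2})$, the unit $\delta=\sqrt{2}-1$ satisfies $\delta\equiv 1\pmod{\mathfrak{L}}$ and $N_{K/\Q}(\delta)=-1$, so the matrix with diagonal entries $1+\sqrt{2}$ and $\sqrt{2}-1$ lies in $\Gamma(\mathfrak{L})$ and contributes the scalar $(-1)^{k}$. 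The theorem survives because the existence of an even integral $\Gamma$ with the given level constrains $k$: for instance, when $\Gamma=\Gamma^*$ the underlying $\Z$-lattice is even unimodular of rank $rn$, whence $rn\in 8\Z$ (see \cite{conway1998sphere}) and $N_{K/\Q}(\varepsilon)^{k}=1$ for every unit $\varepsilon$; in general one needs a Gauss-sum or Weil-representation consistency argument of the same kind as the missing $\gamma\neq 0$ computation. Your outline asserts this triviality without proof, so even the $\gamma=0$ half is not quite closed.
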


\subsection{Lattice of type $D_4$ over integers of $\Q(\zeta_8)$}\label{zeta8sub}
Let $F=\Q(\zeta_8)$ and 
\[
\Lambda=\frac{1-\zeta_8}{2}\Z_F
\]
Then $\Lambda$ has a $\Z$-basis 
\[(e_1,e_2,e_3,e_4)=\left(\frac{1-\zeta_8}{2},\frac{(1-\zeta_8)\zeta_8}{2},\frac{(1-\zeta_8)\zeta_8^2}{2},\frac{(1-\zeta_8)(\zeta_8^3-1-\zeta_8)}{2}\right)_{,}
\] and 
\[
\Tr(e_i \overline{e_j})_{1\leq i,j\leq 4}=
\begin{pmatrix}
	2  & -1 &    &    \\
	-1 &  2 & -1 & -1 \\
	   & -1 &  2 &    \\
	   & -1 &    & 2  
\end{pmatrix}_{.}
\]
Thus, $(\Lambda,\Tr)$ is a lattice of type $D_4$.
Moreover, 
$\Lambda$ also has a $\Z$-basis 
\[
f_i=\frac{1-\zeta_8}{2}\zeta^{i-1}\hspace{10pt}(1\leq i\leq 4),
\]
and $\Lambda^*\left(=\displaystyle{\frac{1}{2(1-\zeta_8)}\Z_F}\right)$ has a $\Z$-basis
\[
f_i^*=\sum_{l=1}^{4}\frac{1}{4}(4-2|i-l|)f_l=\frac{1}{2(1-\zeta_8^{-1})}\zeta_8^{i-1}\hspace{10pt}(1\leq i\leq 4).
\]
Let $C$ be a $(\F_2+u\F_2)$-code of length $n$.
As in subsection \ref{Dnsub2}, we have an additive homomorphism $\rho^{\oplus m}:(\Lambda^*)^{\oplus m}\rightarrow (\F_2+u\F_2)^{\oplus m}$, and construct a lattice $\Gamma_C \subset (\Lambda^*)^{\oplus n}$ from $C$. 

Let $K=\Q(\zeta_8+\zeta_8^{-1})$, and $\{\sigma_1(=\mathrm{id}),\sigma_2\}=\Gal(K/\Q)$. Then $F$ is a 2-dimensional $K$-vector space. For $v,w \in F$ let 
\[
v\cdot w:=v\overline{w}+\overline{v}w.
\] 
This defines a totally positive definite scalar product "$\cdot$" on $F$. Then $\Lambda$ is a $K$-lattice in $F$.

More generally let $V:=\Q(\zeta_8)^{\oplus n}$. Then $V$ is a $2n$-dimensional $K$-vector space. Define the scalar product "$\cdot$" on $V$ as the sum of the scalar products of the individual coordinates as defined before. This scalar product is totally positive definite. Let $C$ be a Type IV code. Then $\Gamma_C$ is a $K$-lattice. 
Moreover, we have $\mathfrak{L}=(\zeta_8+\zeta_8^{-1})(1-\zeta_8)(1-\zeta_8^{-1})\Z_K$ the level  of $\Lambda$, and $\Gamma_C=\Gamma_C^*$.

Let $z=(z_1,z_2)\in \HH^{\oplus 2}$. 
For $a\in \F_2+u\F_2$, we define the theta function $\theta_a(z)$ by
\[
\theta_a(z):=\sum_{x\in x_a+\Lambda}\exp(\pi \sqrt{-1}\Tr_{K/\Q}(zx^2)),
\]
where $x_0=0,~x_1=f_1^*,~x_{1+u}=f_2^*,~x_{u}=f_1^*+f_2^*$, and 
\[
\Tr_{K/\Q}(zx^2):=\sum_{l=1}^{2}z_l\sigma_l(x\cdot x).
\]
Since $\sigma_l(x\cdot x)=2\sigma_l(x\overline{x})$, we see that
\begin{eqnarray*}
\theta_a(z)=\sum_{x\in x_a+\Lambda}\exp(2\pi \sqrt{-1}\Tr_{K/\Q}(zx\overline{x})).	
\end{eqnarray*}
Additionally, we have
\begin{eqnarray*}
	\{x\overline{x}~|~x\in x_{1}+\Lambda \}
	=\{x\overline{x}~|~x\in x_{1+u}+\Lambda \}
\end{eqnarray*}
since 
$f_2^*+\Lambda=\zeta_8(f_1^*+\Lambda)$.
Hence, $\theta_{1}=\theta_{1+u}$.

We also define the theta function $\theta_C(z)$ by
\[
\theta_C:=\sum_{x\in \Gamma_C}\exp(\pi \sqrt{-1}\Tr_{K/\Q}(zx^2)).
\]
From Theorem \ref{K-latthe}, we have the following theorem.
\begin{theorem}\label{zeta8the}
	\begin{enumerate}
		\item The function $\theta_a$, $a=0,~1,~u$, is a Hilberrt modular form of weight $1$ for the group $\Gamma(\mathfrak{L})$.
		\item The function $\theta_C$ is a Hilbert modular form of weight $n$ for the whole group $\mathrm{SL}_2(\Z_K)$.
	\end{enumerate}
\end{theorem}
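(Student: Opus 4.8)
The plan is to deduce both assertions directly from the general $K$-lattice machinery of \Cref{K-latthe}, after verifying that its hypotheses hold for the lattices in question. The one translation to keep in mind is that, by the definition of the slash operator, $f|_kA=f$ is equivalent to $f(\sigma_1(A)z_1,\dots,\sigma_r(A)z_r)=N_{K/\Q}(\gamma z+\delta)^k f(z)$; thus invariance under $|_k$ for a group $\Gamma$ is precisely the defining transformation law of a Hilbert modular form of weight $k$ for $\Gamma$. Holomorphy of every theta series below is guaranteed by \Cref{K-latprop2}, so in each part only the transformation law remains to be checked.

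For (1) I would apply \Cref{K-latthe} to the $K$-lattice $\Gamma=\Lambda$ inside $F$. Because $(\Lambda,\Tr)$ is of type $D_4$ it is integral and even, and $\dim_K F=2$ is even, so the relevant weight is $k=\dim_K F/2=1$ and the level is the ideal $\mathfrak{L}$ already computed in this subsection. Each base point $x_a$ with $a=0,1,u$ lies in $\Lambda^*$ by its definition, and $\theta_a=\theta_{x_a+\Lambda}$; hence \Cref{K-latthe} yields $\theta_a|_1A=\theta_a$ for all $A\in\Gamma(\mathfrak{L})$, which is exactly the weight-$1$ transformation law. Together with holomorphy this proves (1).

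For (2) I would first pin down the arithmetic type of $\Gamma_C$. Taking $C$ of Type IV and applying the case $n=4\in 4\Z$ of \Cref{Dn2the1}, the lattice $\Gamma_C$ is even unimodular, so $\Gamma_C=\Gamma_C^*$. Regarding $\Gamma_C$ as a $K$-lattice in $V=\Q(\zeta_8)^{\oplus n}$, we have $\dim_K V=2n$, so the weight is $k=n$. The key point is that the level appearing in \Cref{K-latthe} must be the level of $\Gamma_C$ itself, and by \Cref{K-latprop3} the equality $\Gamma_C=\Gamma_C^*$ forces this level to equal $\Z_K$, so that $\Gamma(\Z_K)=\mathrm{SL}_2(\Z_K)$. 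Applying \Cref{K-latthe} with $v=0\in\Gamma_C^*$ then gives $\theta_C|_nA=\theta_C$ for every $A\in\mathrm{SL}_2(\Z_K)$, which is the weight-$n$ transformation law for the full group, proving (2).

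The argument carries no genuine analytic difficulty, since convergence and holomorphy are already furnished by \Cref{K-latprop2}; the delicate points are purely bookkeeping. I would be careful to invoke the correct level in each part, namely the level $\mathfrak{L}$ of $\Lambda$ in (1) but the level $\Z_K$ of $\Gamma_C$ in (2), and to confirm that the evenness hypothesis of \Cref{K-latthe} truly holds for $\Gamma_C$. The latter is the main conceptual obstacle: it is exactly the Type IV condition, via \Cref{Dn2the1}, that upgrades $\Gamma_C$ from a merely self-dual code-lattice to an even unimodular $K$-lattice, and only then does \Cref{K-latprop3} make the whole group $\mathrm{SL}_2(\Z_K)$ appear in (2).
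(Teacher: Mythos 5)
Your proposal is correct and matches the paper's own argument, which likewise deduces both parts directly from \Cref{K-latthe}: part (1) by applying it to the even $K$-lattice $\Lambda$ with its level $\mathfrak{L}$ and weight $k=\dim_K F/2=1$, and part (2) by using the Type IV hypothesis together with \Cref{Dn2the1} to get $\Gamma_C=\Gamma_C^*$, whence \Cref{K-latprop3} gives level $\Z_K$, $\Gamma(\Z_K)=\mathrm{SL}_2(\Z_K)$, and weight $k=\dim_K V/2=n$. Your bookkeeping of the two different levels and of the evenness hypothesis is exactly the content the paper leaves implicit in its one-line citation of \Cref{K-latthe}.
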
 

The weight enumerator of a code $C$ of length $n$ over $\F_2+u\F_2$ is the polynomial
\[
W_C \left(X_0,X_1,X_{2} \right):=\sum_{x\in C}X_0^{N_0(x)}X_1^{N_1(x)}X_{2}^{N_2(x)},
\]
where $(N_0(x),N_1(x),N_2(x))$ is the Lee composition of the codeword $x$.

We can now formulate the following theorem.

\begin{theorem}\label{zeta8the2}
	Let $C$ be a Euclidean code of length $n$ over $(\F_2+u\F_2)$. Then the following identity holds:
	\[
	\theta_C=W_C \left(\theta_0,\theta_1,\theta_{u}\right).
	\]
\end{theorem}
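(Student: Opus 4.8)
The plan is to expand $\theta_C$ as a sum over the lattice points of $\Gamma_C$, observe that the summand factors as a product over the $n$ coordinates, and then reorganize the sum by first summing over the codewords of $C$.

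First I would rewrite the exponent. Since the scalar product on $V=\Q(\zeta_8)^{\oplus n}$ is the coordinatewise sum and $x\cdot x = 2\sum_{i=1}^{n}x_i\overline{x_i}$ for $x=(x_1,\dots,x_n)$, the summand factors:
\[
\exp(\pi\sqrt{-1}\,\Tr_{K/\Q}(zx^2)) = \prod_{i=1}^{n}\exp(2\pi\sqrt{-1}\,\Tr_{K/\Q}(zx_i\overline{x_i})),
\]
so that $\theta_C(z) = \sum_{x\in\Gamma_C}\prod_{i=1}^n\exp(2\pi\sqrt{-1}\,\Tr_{K/\Q}(zx_i\overline{x_i}))$. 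Next, because $\rho^{\oplus n}$ acts coordinatewise with kernel $\Lambda^{\oplus n}$, the lattice decomposes as a disjoint union $\Gamma_C = \bigsqcup_{c\in C}\prod_{i=1}^{n}(\rho^{\oplus 1})^{-1}(c_i)$, where each fiber $(\rho^{\oplus 1})^{-1}(a)$ is a single coset $x_a+\Lambda$ of $\Lambda$ in $\Lambda^*$. Invoking the absolute convergence of Proposition \ref{K-latprop2} to justify interchanging the infinite sum over each coset with the finite product, I obtain
\[
\theta_C(z) = \sum_{c\in C}\prod_{i=1}^{n}\left(\sum_{x_i\in (\rho^{\oplus 1})^{-1}(c_i)}\exp(2\pi\sqrt{-1}\,\Tr_{K/\Q}(zx_i\overline{x_i}))\right) = \sum_{c\in C}\prod_{i=1}^{n}\theta_{c_i}(z),
\]
the last equality being exactly the definition of $\theta_{c_i}$.

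To finish, I would check that the coset representatives match, i.e. that $\rho^{\oplus 1}(f_1^*)=1$, $\rho^{\oplus 1}(f_2^*)=1+u$, and $\rho^{\oplus 1}(f_1^*+f_2^*)=u$, a direct computation from the formula for $\rho^{\oplus m}$ in subsection \ref{Dnsub2}; this identifies each fiber with the coset $x_{c_i}+\Lambda$ used to define $\theta_{c_i}$. Then, using the already established identity $\theta_1=\theta_{1+u}$, each factor $\theta_{c_i}$ equals $\theta_0$, $\theta_1$, or $\theta_u$ according as $c_i=0$, $c_i\in\{1,1+u\}$, or $c_i=u$; grouping the factors by the Lee composition $(N_0(c),N_1(c),N_2(c))$ gives $\prod_{i=1}^n\theta_{c_i} = \theta_0^{N_0(c)}\theta_1^{N_1(c)}\theta_u^{N_2(c)}$, and summing over $c\in C$ yields $W_C(\theta_0,\theta_1,\theta_u)$.

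The main obstacle is bookkeeping rather than analysis: the one genuine analytic point is the rearrangement of the series, which is licensed by the absolute convergence in Proposition \ref{K-latprop2}, and the one place where the specific ring structure of $\F_2+u\F_2$ enters is the collapse $\theta_1=\theta_{1+u}$, which is precisely what makes the three-variable Lee-composition weight enumerator (rather than a four-variable complete weight enumerator) the correct object. Verifying that the fiber $(\rho^{\oplus 1})^{-1}(a)$ is exactly $x_a+\Lambda$ for each of the four values of $a$ is the step most prone to an index error, and is the one I would carry out most carefully.
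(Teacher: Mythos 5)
Your proposal is correct and follows essentially the same route as the paper: the paper's proof likewise sums over each fiber $(\rho^{\oplus n})^{-1}(c)$, factors it coordinatewise into $\theta_0^{N_0(c)}\theta_1^{N_1(c)}\theta_u^{N_2(c)}$, and sums over codewords. You have merely made explicit the details the paper leaves implicit — the coset identification $x_a+\Lambda=(\rho^{\oplus 1})^{-1}(a)$, the collapse $\theta_1=\theta_{1+u}$, and the rearrangement justified by the absolute convergence of Proposition \ref{K-latprop2} — all of which check out.
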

\begin{proof}
	Let $c$ be a codeword of $C$. Then
	\[
	\sum_{x\in (\rho^{\oplus n})^{-1}(c)}
	\exp(\pi \sqrt{-1} \Tr_{K/\Q}(zx^2)=\theta_0^{N_0(c)}(z) \theta_1^{N_1(c)}(z)\theta_{u}^{N_2(c)}(z).
	\]
	Summing over all codewords in $C$ yields Theorem \ref{zeta8the2}.
\end{proof}

\subsection{Lattice of type $E_6$ over integers of $\Q(\zeta_9)$}\label{zeta9sub}
Let $F=\Q(\zeta_9)$ and 
\[
\Lambda=\frac{1}{3}(1-\zeta_9)(1-\zeta_9^{-1})\Z_F.
\]
Then $(\Lambda,\Tr_{F/\Q})$ and $(\Lambda^*,\Tr_{F/\Q})$ have a $\Z$-basis 
\[
f_i=\frac{1}{3}(1-\zeta_9)(1-\zeta_9^2)\zeta_9^{i-1} \hspace{10pt}(1\leq i \leq 6),
\]
and
\[
f_i^*=
\begin{cases}
	\frac{1}{3}(1-\zeta_9^4)\zeta_{i-3} & \text{if}~ 1\leq i \leq 3,\\
	\frac{1}{3}(1-\zeta_9^4)(\zeta_{i-3}+\zeta_{i-6}) & \text{if}~ 4\leq i \leq 6
\end{cases}
\] 
respectively.
From
\[
\Tr(f^*_i \overline{f^*_j})_{1\leq i,j\leq 6}=
\frac{1}{3}
\left( 
\begin{array}{cccccc}
    4 &   1 &   1 &   2 & -1 & -1 \\
    1 &   4 &   1 &   2 &  2 & -1 \\
    1 &   1 &   4 &   2 &  2 &  2 \\
    2 &   2 &   2 &   4 &  1 &  1 \\
   -1 &   2 &   2 &   1 &  4 &  1 \\ 
   -1 &  -1 &   2 &   1 &  1 &  4
  \end{array}
  \right)
\]and Subsection \ref{E6sub}, we see that $\Lambda$ is a lattice of type $E_6$.

Let $C$ be a $\Z/3\Z$-code of length $n$.
As in subsection \ref{E6sub}, we have an additive homomorphism $\rho^{\oplus m}:(\Lambda^*)^{\oplus m}\rightarrow (\Z/3\Z)^{\oplus m}$, and construct a lattice $\Gamma_C \subset (\Lambda^*)^{\oplus n}$ from $C$.

We consider $K$-lattice in $F$ and $F^{\oplus n}$ analogously to Subsection \ref{zeta8sub}.
Let $K=\Q(\zeta_9+\zeta_9^{-1})$, and $\{\sigma_1(=\mathrm{id}),\sigma_2,\sigma_3 \}=\Gal(K/\Q)$. Then $F$ is a 2-dimensional $K$-vector space. For $v,w \in F$ let 
\[
v\cdot w:=v\overline{w}+\overline{v}w.
\] 
This defines a totally positive definite scalar product "$\cdot$" on $F$. Then $\Lambda$ is a $K$-lattice in $F$.

More generally let $V:=\Q(\zeta_9)^{\oplus n}$. Then $V$ is a $2n$-dimensional $K$-vector space. Define the scalar product "$\cdot$" on $V$ as the sum of the scalar products of the individual coordinates as defined before. This scalar product is totally positive definite. Let $C$ be a Euclidean self-dual code. Then $\Gamma_C$ is a $K$-lattice. 
Moreover, we have 
\[
\mathfrak{L}=(1-\zeta_9)(1-\zeta_9^{-1}) \Z_K
\] the level  of $\Lambda$, and $\Gamma_C=\Gamma_C^*$.

Let $z=(z_1,z_2,z_3)\in \HH^{\oplus 3}$. 
For $a=0,~1,~2$, we define the theta function $\theta_a(z)$ by
\[
\theta_a(z):=\sum_{x\in x_a+\Lambda}\exp(\pi \sqrt{-1}\Tr_{K/\Q}(zx^2)),
\]
where $x_0=0,~x_1=f_1^*,~x_{2}=-f_1^*$, and 
\[
\Tr_{K/\Q}(zx^2):=\sum_{l=1}^{2}z_l\sigma_l(x\cdot x).
\]
Since $\sigma_l(x\cdot x)=2\sigma_l(x\overline{x})$, we see that
\begin{eqnarray*}
\theta_a(z)=\sum_{x\in x_a+\Lambda}\exp(2\pi \sqrt{-1}\Tr_{K/\Q}(zx\overline{x})).	
\end{eqnarray*}
Additionally, we have
\begin{eqnarray*}
	\{x\overline{x}~|~x\in x_{1}+\Lambda \}
	=\{x\overline{x}~|~x\in x_{2}+\Lambda \}
\end{eqnarray*}
since 
$-f_1^*+\Lambda=-(f_1^*+\Lambda)$.
Hence, $\theta_{1}=\theta_{2}$.

We also define the theta function $\theta_C(z)$ by
\[
\theta_C:=\sum_{x\in \Gamma_C}\exp(\pi \sqrt{-1}\Tr_{K/\Q}(zx^2)).
\]
From Theorem \ref{K-latthe}, we have the following theorem.
\begin{theorem}\label{zeta9the}
	\begin{enumerate}
		\item The function $\theta_a$, $a\in \{0,1\}$ is a Hilberrt modular form of weight $1$ for the group $\Gamma(\mathfrak{L})$.
		\item The function $\theta_C$ is a Hilbert modular form of weight $n$ for the whole group $\mathrm{SL}_2(\Z_K)$.
	\end{enumerate}
\end{theorem}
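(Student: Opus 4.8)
The plan is to deduce both parts directly from Theorem \ref{K-latthe} (the transformation law for lattice theta functions), exactly as in the proof of Theorem \ref{zeta8the}, after identifying the relevant $K$-lattice and its weight in each case. Throughout I would use that $\Tr_{K/\Q}(v\cdot w) = \Tr_{F/\Q}(v\overline{w})$, so that the $\Z$-valued form attached to the scalar product $v\cdot w = v\overline{w}+\overline{v}w$ is precisely the trace form making $(\Lambda,\Tr)$ a lattice of type $E_6$; in particular $\Lambda$ is even and integral. Both $\theta_a$ and $\theta_C$ are holomorphic on the relevant $\HH^{\oplus 3}$ by Proposition \ref{K-latprop2}, so in each case it remains only to establish the transformation law.

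For part (1), I would observe that $\theta_a = \theta_{x_a+\Lambda}$ is the theta function of the $K$-lattice $\Lambda$ in $F$ with shift $x_a\in\Lambda^*$. Since $F$ is a $2$-dimensional $K$-vector space, here $\dim_K V = 2$ and hence $k = 1$. Because $\Lambda$ is even and integral, Theorem \ref{K-latthe} applies and yields $\theta_a|_1 A = \theta_a$ for every $A\in\Gamma(\mathfrak{L})$, where $\mathfrak{L} = (1-\zeta_9)(1-\zeta_9^{-1})\Z_K$ is the level of $\Lambda$. Unwinding the definition of $|_1$, this is exactly the weight-$1$ modularity condition for $\Gamma(\mathfrak{L})$. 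Finally, since $\theta_1 = \theta_2$, it suffices to record the statement for $a\in\{0,1\}$.

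For part (2), I would apply the same theorem to the $K$-lattice $\Gamma_C$ in $V = \Q(\zeta_9)^{\oplus n}$. Now $\dim_K V = 2n$, so $k = n$. The key input is that $C$ is Euclidean self-dual: by Theorem \ref{E6the} this makes $\Gamma_C$ even and unimodular, so $\Gamma_C = \Gamma_C^*$. By Proposition \ref{K-latprop3}(2) the level of $\Gamma_C$ is then $\mathfrak{L} = \Z_K$, whence $\Gamma(\mathfrak{L}) = \mathrm{SL}_2(\Z_K)$. Applying Theorem \ref{K-latthe} gives $\theta_C|_n A = \theta_C$ for every $A\in\mathrm{SL}_2(\Z_K)$, that is, $\theta_C$ is a Hilbert modular form of weight $n$ for the full group.

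The routine parts are the two dimension counts ($k=1$ and $k=n$) and the verification that the trace form matches the $E_6$ Gram matrix, both of which are essentially recorded in the setup of this subsection. The one genuinely substantive point — and the step I expect to require the most care — is the level computation: one must confirm that the level of the single-copy lattice $\Lambda$ equals $(1-\zeta_9)(1-\zeta_9^{-1})\Z_K$ (needed for part (1)), and, for part (2), that even-unimodularity of $\Gamma_C$ via Theorem \ref{E6the} indeed forces the level to be all of $\Z_K$ through Proposition \ref{K-latprop3}(2). Once the level is pinned down, both modularity statements are immediate from Theorem \ref{K-latthe}.
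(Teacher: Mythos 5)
Your proposal is correct and follows essentially the same route as the paper, which likewise deduces both parts directly from Theorem \ref{K-latthe}: for part (1) you take $V=F$ with $\dim_K V=2$ so $k=1$ and the stated level $\mathfrak{L}=(1-\zeta_9)(1-\zeta_9^{-1})\Z_K$, and for part (2) you use Theorem \ref{E6the} to get $\Gamma_C=\Gamma_C^*$ from Euclidean self-duality, whence Proposition \ref{K-latprop3}(2) gives $\mathfrak{L}=\Z_K$ and $\Gamma(\mathfrak{L})=\mathrm{SL}_2(\Z_K)$ with $k=n$. Your explicit identification $\Tr_{K/\Q}(v\cdot w)=\Tr_{F/\Q}(v\overline{w})$, justifying that $\Lambda$ and $\Gamma_C$ are even and integral so that Theorem \ref{K-latthe} applies, is a point the paper leaves implicit, and your flagging of the level computation for $\Lambda$ as the one unverified input accurately reflects that the paper asserts it without proof.
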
 

The weight enumerator of a code $C$ of length $n$ over $\Z/3\Z$ is the polynomial
\[
W_C \left(X_0,X_1 \right):=\sum_{u\in C}X_0^{l_0(u)}X_1^{l_1(u)},
\]
where $l_0(u)$ is the number zeros in $u$, and $l_1(u)$ is the number of $1$ or $2$ occurring in the codeword $u$.

As in Theorem \ref{zeta8the2} we can show the following theorem.
\begin{theorem}\label{zeta9the2}
	Let $C$ be a $\Z/3\Z$-code of length $n$ with $C\subset C^{\perp}$. Then the following identity holds:
	\[
	\theta_C=W_C \left(\theta_0,\theta_1 \right).
	\]
\end{theorem}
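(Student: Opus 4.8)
The plan is to mirror the proof of Theorem \ref{zeta8the2}, exploiting the coset decomposition of $\Gamma_C$ together with the multiplicativity of the theta summand across the $n$ coordinates. First I would recall that, by construction, $\Gamma_C = (\rho^{\oplus n})^{-1}(C)$ splits as the disjoint union $\bigsqcup_{c \in C}(\rho^{\oplus n})^{-1}(c)$ indexed by codewords $c = (c_1,\dots,c_n) \in C$, so that $\theta_C = \sum_{c \in C}\sum_{x \in (\rho^{\oplus n})^{-1}(c)} \exp(\pi\sqrt{-1}\Tr_{K/\Q}(zx^2))$. Since $\rho : \Lambda^* \to \Z/3\Z$ is surjective with kernel $\Lambda$ and the elements $x_0 = 0$, $x_1 = f_1^*$, $x_2 = -f_1^*$ represent the three cosets (indeed $\rho(f_1^*) = 1$ by the definition of $\rho$ in Subsection \ref{E6sub}), the fibre over $c$ is the product of cosets $\prod_{i=1}^n (x_{c_i} + \Lambda)$ inside $(\Lambda^*)^{\oplus n}$.

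The central computation is to factor the theta summand coordinatewise. Because the scalar product on $V = \Q(\zeta_9)^{\oplus n}$ is the sum of the scalar products on the individual coordinates, for $x = (x_1,\dots,x_n)$ one has $\Tr_{K/\Q}(z x^2) = \sum_{i=1}^n \Tr_{K/\Q}(z x_i^2)$, whence $\exp(\pi\sqrt{-1}\Tr_{K/\Q}(zx^2)) = \prod_{i=1}^n \exp(\pi\sqrt{-1}\Tr_{K/\Q}(z x_i^2))$. Summing over the fibre and interchanging summation with the product (legitimate by the absolute convergence from Proposition \ref{K-latprop2}) then gives
\[
\sum_{x \in (\rho^{\oplus n})^{-1}(c)} \exp(\pi\sqrt{-1}\Tr_{K/\Q}(zx^2)) = \prod_{i=1}^n \theta_{c_i}(z).
\]

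Next I would collapse this product using the already-established identity $\theta_1 = \theta_2$. Grouping the factors according to whether $c_i = 0$ or $c_i \in \{1,2\}$, the product equals $\theta_0^{l_0(c)} \theta_1^{l_1(c)}$, where $l_0(c)$ counts the zero coordinates of $c$ and $l_1(c)$ the nonzero ones. Summing over all codewords $c \in C$ and comparing with the definition $W_C(X_0,X_1) = \sum_{u \in C} X_0^{l_0(u)} X_1^{l_1(u)}$ yields the asserted identity $\theta_C = W_C(\theta_0, \theta_1)$.

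The only genuinely load-bearing input beyond routine bookkeeping is the identity $\theta_1 = \theta_2$, already verified above via $-f_1^* + \Lambda = -(f_1^* + \Lambda)$; this is precisely what lets the naively trivariate enumerator be written in the two variables $X_0, X_1$, matching the definition of $W_C$ for $\Z/3\Z$-codes. I do not anticipate any further obstacle, since everything else is the coordinatewise factorization that also drove Theorem \ref{zeta8the2}.
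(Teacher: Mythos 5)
Your proof is correct and takes essentially the same approach as the paper, which establishes Theorem \ref{zeta9the2} ``as in Theorem \ref{zeta8the2}'': decompose $\Gamma_C$ into the fibres $(\rho^{\oplus n})^{-1}(c)$ over codewords, factor the theta summand coordinatewise so the fibre sum becomes $\prod_{i}\theta_{c_i}(z)$, and sum over $C$. Your additional explicit appeal to $\theta_1=\theta_2$ to pass from the naively trivariate product to the two-variable enumerator $W_C(X_0,X_1)$ is exactly the point the paper relies on implicitly in its definition of $W_C$ for $\Z/3\Z$-codes.
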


\begin{bibdiv}
    \begin{biblist}
    \bib{Ebeling}{article}{
   author={Ebeling, Wolfgang},
   title={Lattices and codes},
   series={Advanced Lectures in Mathematics},
   edition={3},
   note={A course partially based on lectures by Friedrich Hirzebruch},
   publisher={Springer Spektrum, Wiesbaden},
   date={2013},
   pages={xvi+167},
   isbn={978-3-658-00359-3},
   isbn={978-3-658-00360-9},
   review={\MR{2977354}},
   doi={10.1007/978-3-658-00360-9},
}

\bib{HARADA2002273}{article}{
title = {Self-dual Z4-codes and Hadamard matrices},
journal = {Discrete Mathematics},
volume = {245},
number = {1},
pages = {273-278},
year = {2002},
issn = {0012-365X},
doi = {https://doi.org/10.1016/S0012-365X(01)00310-7},
url = {https://www.sciencedirect.com/science/article/pii/S0012365X01003107},
author = {Masaaki Harada},
keywords = {Self-dual codes over , Type I codes, Hadamard matrices},
abstract = {In this note, we investigate Type I codes over Z4 constructed from Hadamard matrices. As an application, we construct a Type I Z4-code with minimum Euclidean weight 16 of length 40. This code is the first example of such a Type I Z4-code. This code also gives an example of a 40-dimensional extremal odd unimodular lattice with minimum norm 4.}
}

\bib{Rains}{article}{
      title={Self-Dual Codes}, 
      author={Eric M. Rains and Neil J. A. Sloane},
      year={2002},
      eprint={math/0208001},
      archivePrefix={arXiv},
      primaryClass={math.CO}
}

\bib{Sloane1999}{article}{
author={Neil J. A. Sloane}, 
title={Further Connections Between Codes and Lattices},
booktitle={Sphere Packings, Lattices and Groups},
year={1999}, 
publisher={Springer New York}, 
address={New York, NY}, 
pages={181--205}, 
ISBN={978-1-4757-6568-7}, 
doi={10.1007/978-1-4757-6568-7-7}, 
url={https://doi.org/10.1007/978-1-4757-6568-7-7}
} 

\bib{MACWILLIAMS1978288}{article}{
title = {Self-dual codes over GF(4)},
journal = {Journal of Combinatorial Theory, Series A},
volume = {25},
number = {3},
pages = {288-318},
year = {1978},
issn = {0097-3165},
doi = {https://doi.org/10.1016/0097-3165(78)90021-3},
url = {https://www.sciencedirect.com/science/article/pii/0097316578900213},
author = {F. Jessie MacWilliams and Andrew M. Odlyzko and Neil J. A. Sloane and Harold N. Ward},
abstract = {This paper studies codes C over GF(4) which have even weights and have the same weight distribution as the dual code C⊥. Some of the results are as follows. All such codes satisfy C⊥ = C (If C⊥= C, T has a binary basis.) The number of such C's is determined, and those of length ⩽14 are completely classified. The weight enumerator of C is characterized and an upper bound obtained on the minimum distance. Necessary and sufficient conditions are given for C to be extended cyclic. Two new 5-designs are constructed. A generator matrix for C can be taken to have the form [I | B], where B⊥ = B. We enumerate and classify all circulant matrices B with this property. A number of open problems are listed.}
}

\bib{BayerFluckiger1999LatticesAN}{article}{
  title={Lattices and number Fields},
  author={Eva Bayer-Fluckiger},
  year={1999},
  url={https://api.semanticscholar.org/CorpusID:124824444}
}

\bib{conway1998sphere}{article}{
  title={Sphere Packings, Lattices and Groups},
  author={John H. Conway and Neil J. A. Sloane},
  isbn={9780387985855},
  lccn={92027819},
  series={Grundlehren der mathematischen Wissenschaften},
  url={https://books.google.co.jp/books?id=upYwZ6cQumoC},
  year={1998},
  publisher={Springer New York}
}

\bib{Betsumiya2000OptimalSC}{article}{
  title={Optimal Self-Dual Codes over F2 + vF2},
  author={Koichi Betsumiya},
  year={2000},
  url={https://api.semanticscholar.org/CorpusID:15781814}
}

\bib{article}{article}{
author = {Dougherty, Steven T. and Gaborit, Philippe and Harada, Masaaki and Solé, Patrick},
year = {1999},
month = {02},
pages = {32 - 45},
title = {Type II codes over F2+uF2},
volume = {45},
journal = {Information Theory, IEEE Transactions on},
doi = {10.1109/18.746770}
}

\bib{Dougherty1999TypeIS}{article}{
  title={Type IV self-dual codes over rings},
  author={Steven T. Dougherty and Philippe Gaborit and Masaaki Harada and Akihiro Munemasa and Patrick Sol{\'e}},
  journal={IEEE Trans. Inf. Theory},
  year={1999},
  volume={45},
  pages={2345-2360},
  url={https://api.semanticscholar.org/CorpusID:11780351}
}

\bib{Ozeki2018ADS}{article}{
  title={A DETAILED STUDY OF THE RELATIONSHIP BETWEEN SOME OF THE ROOT LATTICES AND THE CODING THEORY},
  author={Michio Ozeki},
  journal={Kyushu Journal of Mathematics},
  year={2018},
  volume={72},
  pages={123-141},
  url={https://api.semanticscholar.org/CorpusID:126095117}
}

\bib{PLESS1975313}{article}{
title = {On the classification and enumeration of self-dual codes},
journal = {Journal of Combinatorial Theory, Series A},
volume = {18},
number = {3},
pages = {313-335},
year = {1975},
issn = {0097-3165},
doi = {https://doi.org/10.1016/0097-3165(75)90042-4},
url = {https://www.sciencedirect.com/science/article/pii/0097316575900424},
author = {Vera Pless and Neil J. A. Sloane},
abstract = {A complete classification is given of all [22, 11] and [24, 12] binary self-dual codes. For each code we give the order of its group, the number of codes equivalent to it, and its weight distribution. There is a unique [24, 12, 6] self-dual code. Several theorems on the enumeration of self-dual codes are used, including formulas for the number of such codes with minimum distance ⩾ 4, and for the sum of the weight enumerators of all such codes of length n. Selforthogonal codes which are generated by code words of weight 4 are completely characterized.}
}

\bib{CONWAY198283}{article}{
title = {On the enumeration of lattices of determinant one},
journal = {Journal of Number Theory},
volume = {15},
number = {1},
pages = {83-94},
year = {1982},
issn = {0022-314X},
doi = {https://doi.org/10.1016/0022-314X(82)90084-1},
url = {https://www.sciencedirect.com/science/article/pii/0022314X82900841},
author = {John H. Conway and Neil J. A. Sloane},
abstract = {The integral lattices of determinant 1 and dimension not exceeding 20 are enumerated. Siegel's mass formula provides a check that the list is complete. The mass formula is also used to verify that Niemeier's list of even lattices of dimension 24 is complete.}
}

\bib{47f3261c-228b-3df7-a34f-9836a7a9b34a}{article}{
 ISSN = {00361399},
 URL = {http://www.jstor.org/stable/2100515},
 abstract = {This paper studies self-dual and maximal self-orthogonal codes over GF(3). First, a number of Gleason-type theorems are given, describing the weight enumerators of such codes. Second, a table of all such codes of length ≤ 12 is constructed. Finally, the complete weight enumerators of various quadratic residue and symmetry codes of length ≤ 60 are obtained.},
 author = {Colin L. Mallows and Vera Pless and Neil J. A. Sloane},
 journal = {SIAM Journal on Applied Mathematics},
 number = {4},
 pages = {649--666},
 publisher = {Society for Industrial and Applied Mathematics},
 title = {Self-Dual Codes Over GF(3)},
 urldate = {2025-02-06},
 volume = {31},
 year = {1976}
}

\bib{CONWAY199330}{article}{
title = {Self-dual codes over the integers modulo 4},
journal = {Journal of Combinatorial Theory, Series A},
volume = {62},
number = {1},
pages = {30-45},
year = {1993},
issn = {0097-3165},
doi = {https://doi.org/10.1016/0097-3165(93)90070-O},
url = {https://www.sciencedirect.com/science/article/pii/009731659390070O},
author = {John H. Conway and Neil J. A. Sloane},
abstract = {Michael Klemm has recently studied the conditions satisfied by the complete weight enumerator of a self-dual code over Z4, the ring of integers modulo 4. In the present paper we deduce analogues theorems for the “symmetrized” weight enumerator (which ignores the difference between +1 and −1 coordinates) and the Hamming weight enumerator. We give a number of examples of self-dual codes, including codes which realize the basic weight enumerators occurring in all these theorems, and the complete list of self-dual codes of length n ⩽ 9. We also classify those self-orthogonal codes that are generated by words of type ±14On−4.}
}

\bib{LEON1982178}{article}{
title = {Self-dual codes over GF(5)},
journal = {Journal of Combinatorial Theory, Series A},
volume = {32},
number = {2},
pages = {178-194},
year = {1982},
issn = {0097-3165},
doi = {https://doi.org/10.1016/0097-3165(82)90019-X},
url = {https://www.sciencedirect.com/science/article/pii/009731658290019X},
author = {Jeffrey S. Leon and Vera Pless and Neil J. A. Sloane},
abstract = {It is shown that a self-orthogonal code over GF(5) which is generated by words of weight 4 has a decomposition into components belonging to three infinite families: dn (n = 4, 5, 6, 7, 8, 10, 12,…), en (n = 6, 8, 10,…) and Fn (n = 6, 8, 10,…). All maximal self-orthogonal (and self-dual) codes of length ⩽ 12 are classified, and a number of interesting codes of greater length are constructed.}
}

\bib{1057345}{article}{
  author={Vera Pless and Vladimir Tonchev},
  journal={IEEE Transactions on Information Theory}, 
  title={Self-dual codes overGF(7)(Corresp.)}, 
  year={1987},
  volume={33},
  number={5},
  pages={723-727},
  keywords={},
  doi={10.1109/TIT.1987.1057345}
}

\bib{BETSUMIYA200337}{article}{
title = {On self-dual codes over some prime fields},
journal = {Discrete Mathematics},
volume = {262},
number = {1},
pages = {37-58},
year = {2003},
issn = {0012-365X},
doi = {https://doi.org/10.1016/S0012-365X(02)00520-4},
url = {https://www.sciencedirect.com/science/article/pii/S0012365X02005204},
author = {Koichi Betsumiya and Stelios Georgiou and Thomas A. Gulliver and Masaaki Harada and Christos Koukouvinos},
keywords = {Self-dual codes, MDS codes, Double circulant codes, Orthogonal designs},
abstract = {In this paper, we study self-dual codes over GF(p) where p=11,13,17,19,23 and 29. A classification of such codes for small lengths is given. The largest minimum weights of these codes are investigated. Many maximum distance separable self-dual codes are constructed.}
}

\bib{BALMACEDA20082984}{article}{
title = {Mass formula for self-dual codes over Zp2},
journal = {Discrete Mathematics},
volume = {308},
number = {14},
pages = {2984-3002},
year = {2008},
note = {Conference on Association Schemes, Codes and Designs},
issn = {0012-365X},
doi = {https://doi.org/10.1016/j.disc.2007.08.024},
url = {https://www.sciencedirect.com/science/article/pii/S0012365X07006450},
author = {Jose Maria P. Balmaceda and Rowena Alma L. Betty and Fidel R. Nemenzo},
keywords = {Mass formula, Self-dual codes, Finite ring, Classification},
abstract = {We give a mass formula for self-dual codes over Zp2, where p is an odd prime. Using the mass formula, we classify such codes of lengths up to n=8 over the ring Z9, n=7 over Z25 and n=6 over Z49.}
}

\bib{GABORIT2002171}{article}{
title = {Type II Codes over F4},
journal = {Finite Fields and Their Applications},
volume = {8},
number = {2},
pages = {171-183},
year = {2002},
issn = {1071-5797},
doi = {https://doi.org/10.1006/ffta.2001.0333},
url = {https://www.sciencedirect.com/science/article/pii/S1071579701903338},
author = {Philippe Gaborit and Vera Pless and Patrick Solé and Oliver Atkin},
abstract = {The natural analogues of Lee weight and the Gray map over F4 are introduced. Self-dual codes for the Euclidean scalar product with Lee weights multiple of 4 are called Type II. They produce Type II binary codes by the Gray map. All extended Q-codes of length a multiple of 4 are Type II. This includes quadratic residue codes attached to a prime p≡3 (mod8), certain double circulant codes, and some affine invariant codes. A general mass formula is derived, a new upper bound for Euclidean self-dual codes over F4 is given, and the first extremal self-dual [92, 46, 16] binary code is built.}
}

\bib{MR1695075}{article}{
    AUTHOR = {Dougherty, Steven T. and Gulliver, Thomas A. and Harada,
              Masaaki},
     TITLE = {Type {II} self-dual codes over finite rings and even
              unimodular lattices},
   JOURNAL = {J. Algebraic Combin.},
  FJOURNAL = {Journal of Algebraic Combinatorics. An International Journal},
    VOLUME = {9},
      YEAR = {1999},
    NUMBER = {3},
     PAGES = {233--250},
      ISSN = {0925-9899,1572-9192},
   MRCLASS = {11H71 (94B05)},
  MRNUMBER = {1695075},
MRREVIEWER = {Harold\ N.\ Ward},
       DOI = {10.1023/A:1018696102510},
       URL = {https://doi.org/10.1023/A:1018696102510},
}
    \end{biblist}
    \end{bibdiv}

\end{document}